\definecolor{gray75}{gray}{0.75}
\newcommand{\hsp}{\hspace{20pt}}
\titleformat{\chapter}[hang]{\Huge\bfseries}{\thechapter\hsp\textcolor{gray75}{|}\hsp}{0pt}{\Huge\bfseries}
\newenvironment{claim}[2]
    {\begin{center}
    \begin{tcolorbox}
    [colback=white!20!white,colframe=black!20!white,sharp corners, breakable]
    \begin{minipage}[t]{.15\textwidth}
    \textbf{Claim\# #1: }
    \end{minipage}
    \begin{minipage}[t]{.85\textwidth}
    #2
    \end{minipage}
%
\ \\
 \textit{Proof:}
    }
    {\flushright{\checkmark}
    \end{tcolorbox}
    \end{center}
    }
\newtheorem{theorem}{{Theorem}}[section]
\newtheorem{lemma}[theorem]{{Lemma}}
\newtheorem{korollar}[theorem]{{Corollary}}
\newcommand{\Sp}{\mathbb S}
\newcommand{\N}{\mathbb{N}}
\newcommand{\R}{\mathbb{R}}
\newcommand{\tr}{\operatorname{tr}}
\newcommand{\avint}{{-}\hspace{-.4cm}\int} 
\renewcommand{\theequation}{\arabic{section}.\arabic{equation}}
\begin{document}
\author{Jan-Henrik Metsch }
\title{On the area-preserving Willmore flow of small bubbles\\ sliding on a domain's boundary}
\maketitle

\begin{abstract}
We consider the area-preserving Willmore evolution of surfaces $\phi$ that are close to a half-sphere with a small radius, sliding on the boundary $S$ of a domain $\Omega$ while meeting it orthogonally. We prove that the flow exists for all times and keeps a `half-spherical' shape. Additionally, we investigate the asymptotic behaviour of the flow and prove that for large times the barycenter of the surfaces approximately follows an explicit ordinary differential equation. Imposing additional conditions on the mean curvature of $S$, we then establish convergence of the flow. 
\end{abstract}
\textbf{MSC2020 Subject Classification}: Primary: 53E40, Secondary: 35G31, 47J07\\

\textbf{Keywords}: Geometric Flow, Willmore Flow, Fourth Order Nonlinear Parabolic Problem, Initial-Boundary Value Problem, Implicit Function Theorem
\section{Introduction}
\setcounter{equation}0
Given an immersion $\phi:\Sp^2_+\rightarrow\R^3$ with mean curvature $H$ the \emph{Willmore energy} is defined as 
$$\mathcal W(\phi):=\frac14\int_{\Sp^2_+}H^2d\mu_\phi.$$
For a sufficiently smooth bounded domain $\Omega\subset\R^3$ with boundary $S:=\partial\Omega$ we consider the class 
$$\mathcal M(S):=\left\{\phi\in C^4(\Sp^2_+,\R^3)\textrm{ immersed}\ \bigg|\ \phi(\partial \Sp^2_+)\subset S, \frac{\partial\phi}{\partial\eta}=N^S\circ\phi\right\}$$
of immersed surfaces meeting $S$ orthogonally along the boundary. Here $\eta$ and $N^S$ denote the interior unit normals of $\Sp^2_+$ and $\Omega$ along their respective boundaries. In this article we study the \emph{area preserving Willmore flow} inside a suitable subclass of $\mathcal M(S)$. That is, denoting the scalar Willmore operator by $W$ and the inner normal of $\phi$ by $\nu$ the equation
\begin{equation}\label{introevlaw}
\left\{\begin{aligned}
\langle\dot\phi(t),\nu\rangle&=- W(\phi(t))+\alpha(t) H(\phi(t))\\
\frac{\partial H}{\partial\eta}&+h^S(\nu,\nu)H=0\\
\phi(t)&\in \mathcal M(S)
\end{aligned}\right.
\end{equation}
Here $\alpha$ is a suitable Lagrange multiplier. The additional third-order Neumann-type boundary condition appearing in (\ref{introevlaw}) arises naturally from the requirement of steepest energy descent (for details see Subsection \ref{flow}). \\

Alessandroni and Kuwert studied the corresponding elliptic problem in \cite{AK} and constructed critical points inside $\mathcal M(S)$ that are (up to rotation and scaling) close to half-spheres attached to $S$. Introducing the class $\mathcal S^{4,\gamma}(\lambda,\theta)$ as the subset of $\mathcal M(S)$ consisting of almost half-spheres with area $2\pi\lambda^2$ where $\theta>0$ measures the $C^{4,\gamma}$-deviation from the round half-sphere (for a detailed definition see Subsection \ref{terminilogy}) we first prove long-time existence of the flow:

\begin{theorem}[Long-time existence]\label{theorem1}\ \\
Let $\Omega\in C^{17}$. Then there exist $\lambda_0>0$ and $\theta_1,\theta_0>0$ such that:
\begin{enumerate}
    \item For  $\lambda\in (0,\lambda_0]$ and $\phi_0\in \mathcal\mathcal S^{4,\gamma}(\lambda,\theta_0)$ there exists a classical solution $\phi:[0,\infty)\rightarrow C^ {4,\gamma}(\Sp^2_+)$ to the area preserving Willmore flow that stays in the class $\mathcal S^{4,\gamma}(\lambda,\theta_1)$ and satisfies $\phi(0)=\phi_0$. Moreover $\phi$ is unique up to reparameterizations.
    \item Given $\phi(t)$ as in part 1, any sequence $t_n\rightarrow\infty$ contains a subsequence $(t_{n_k})$ such that $\phi(t_{n_k})$ converges to a critical point of the elliptic problem (\ref{elliptic}) in $C^{4,\beta}$ for any $\beta<\gamma$.
\end{enumerate} 
\end{theorem}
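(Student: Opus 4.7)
The plan is to write the evolving surface $\phi(t)$ as a normal graph over a moving reference half-sphere of radius $\lambda$ centred at some point $q(t)\in S$, thereby reducing the geometric fourth-order flow (\ref{introevlaw}) to a scalar quasilinear parabolic equation for a small height function $u(t)$ coupled with a slow ODE for the centre $q(t)$. The free parameter $q(t)$ would be fixed by imposing $L^2$-orthogonality of $u(t)$ against the translational zero modes of the linearised Willmore operator on the round half-sphere, which, together with the area constraint killing the dilation mode, produces a strictly positive spectrum on the orthogonal complement. The orthogonality condition defining $\mathcal M(S)$ translates into a first-order Neumann condition on $u$, and the natural third-order boundary condition in (\ref{introevlaw}) becomes a third-order condition on $u$; the resulting initial-boundary value problem fits into a standard Solonnikov-type framework for fourth-order parabolic Neumann problems, which gives short-time existence and maximal regularity in parabolic Hölder spaces. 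The Lagrange multiplier $\alpha(t)$ is read off algebraically by differentiating the area constraint along the flow.

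For long-time existence I would argue by continuation. As long as $\phi(t)\in \mathcal S^{4,\gamma}(\lambda,\theta_1)$ with $\theta_1$ sufficiently small, the spectral gap combined with monotonicity of $\mathcal W$ (non-increasing along (\ref{introevlaw}), with area exactly preserved) yields a differential inequality of the schematic form $\tfrac{d}{dt}\|u\|_{L^2}^2 \leq -c\,\|u\|_{H^2}^2 +$ higher order terms, with the nonlinearity absorbed once $\theta$ is small. Integrating this and invoking parabolic Schauder estimates at the parabolic scale $\lambda^4$ upgrades the $L^2$-smallness to a $C^{4,\gamma}$-bound, forcing $\theta(t)<\theta_1$ strictly provided $\theta_0<\theta_1$ is chosen small enough; the flow can then be continued past any putative maximal time. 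The centre $q(t)$ drifts along $S$ at a controlled rate but does not leave a neighbourhood on which the parametrisation is regular. Uniqueness up to reparameterisation is automatic since the height-function representation quotients out tangential diffeomorphisms.

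For part 2, the same monotonicity yields $\int_0^\infty \|W(\phi)-\alpha H\|_{L^2}^2\,dt<\infty$, so along any sequence $t_n\to\infty$ one can pass to a subsequence with $W(\phi(t_{n_k}))-\alpha(t_{n_k})H(\phi(t_{n_k}))\to 0$ in $L^2$. Combining this with the uniform $C^{4,\gamma}$-bound from part 1 and Arzelà--Ascoli produces $C^{4,\beta}$-convergence for any $\beta<\gamma$, and the limit satisfies the stationary equation (\ref{elliptic}) with the prescribed boundary conditions. The main obstacle, and the distinguishing feature compared to the closed-surface case, is the boundary analysis: one must verify that the Lopatinskii--Shapiro condition holds uniformly as $\lambda\to 0$ for the coupled fourth-order system consisting of the linearised Willmore operator, the orthogonality condition on $\phi$, and the Neumann condition on $H$, so that the Schauder constants do not degenerate in the small-scale limit. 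The second, more subtle difficulty is the neutral translational mode along $S$, which is not contracted by the linearised flow and must be absorbed into the slow drift of $q(t)$; it is precisely this mode that reappears in the later sections as the approximate ODE governing the barycentre.
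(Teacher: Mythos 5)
Your proposal shares the geometric setup with the paper --- writing $\phi(t)$ as a graph over a moving reference half-sphere centred at a point of $S$, splitting off a slow ODE for the centre, and absorbing the translational zero modes into the centre's drift (your $L^2$-orthogonality against the translational modes is the linearisation of the paper's Riemannian barycenter constraint $C[u,\tilde g]=0$, since $\nabla C^i[0,\delta]=-\frac3{2\pi}\omega^i$) --- but it departs from the paper's method in the long-time argument, and part 2 has a genuine gap.

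The paper does not run a continuation argument with an a priori $L^2$ differential inequality. Instead it applies the implicit function theorem twice and then glues. First it solves the flow with an \emph{arbitrarily prescribed} barycenter curve (Section \ref{section4}); with the curve prescribed, the area and barycenter constraints eliminate exactly the degenerate directions of the linearised operator, and the IFT yields on a fixed interval $[0,T]$ the decay estimate $\|u(T)\|_{C^{4,\gamma}}\le Ce^{-6T}\|u_0\|_{C^{4,\gamma}}+C(T)\lambda$. A second IFT then determines the barycenter curve from the consistency equation (Lemma \ref{shorttime}). The long-time solution is produced by gluing these blocks (Corollary \ref{longimeexistence}), the decay estimate guaranteeing that the terminal datum of each block lies again inside the IFT neighbourhood. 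Your continuation approach is plausible in spirit, but it shifts the entire burden onto proving $C^{4,\gamma}$ a priori bounds that hold uniformly in $\lambda$; the step ``integrate the $L^2$ inequality and invoke parabolic Schauder at scale $\lambda^4$'' is exactly where that uniformity must be established, and it is the crux rather than a side concern. The paper's IFT-and-gluing structure is designed so the uniformity is built in through the choice of neighbourhoods and the exponential decay constant, and so that the Lopatinskii--Shapiro/compatibility questions you raise reduce, after rescaling, to a fixed model problem on $\Sp^2_+$ with metric close to $\delta$.

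For part 2 there is a genuine gap. Finiteness of $\int_0^\infty\|P_H^\perp W(\phi)\|_{L^2}^2\,dt$ does \emph{not} let you extract, from an arbitrary given sequence $t_n\to\infty$, a subsequence along which the gradient tends to zero: the prescribed $t_n$ may systematically land where the integrand is large. To make your reasoning work you would need additional control, for instance a uniform modulus of continuity in $t$ for $\|P_H^\perp W(\phi(t))\|_{L^2}$ to upgrade integrability to pointwise decay. The paper avoids this by the argument of Corollary \ref{subconvergence}: take a $C^{4,\beta}$-convergent subsequence by compactness, then show the limit is critical by contradiction, using energy dissipation together with Ehrling's inequality to bound from below the time the flow must spend in an annulus around a putative non-critical limit, which would force $\mathcal W\to-\infty$.
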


We prove Theorem \ref{theorem1} by writing $\phi(t)$ as a normal graph over a small half-sphere centered at the surface's Riemannian barycenter $\xi(t):=C[\phi(t)]\in S$ and derive evolution equations for both the graph function $u(t)$ and $\xi(t)$. The proof then follows a two-step approach. First, an arbitrary curve $\xi(t)$ is chosen, rendering the evolution equation for $u$ as a gradient flow with the time-dependent constraint of prescribed barycenter curve. Existence of $u$ and suitably decay estimates can then be established by an application of the implicit function theorem as the additional constraint fixes the kernel of the elliptic operator appearing in the linearized evolution equation for $u$. Afterwards, an appropriate choice for $\xi$ is derived.\\

Due to the application of the implicit function theorem, Theorem \ref{theorem1} is of perturbative nature, which is reflected in the smallness assumption on $\theta_0$. In Riemannian manifolds, similar arguments have been used by Mattuschka \cite{Mattuschka} to study the area-preserving Willmore flow and by Alikakos and Freire \cite{nicholas} in the context of the normalised mean curvature flow. In \cite{simonflow} similar techniques are used to prove exponential convergence of the Willmore flow to a sphere if it is initialised sufficiently close to one. In  \cite{bellettinifusco} Bellettini and Fusco study the volume-preserving mean curvature flow of surfaces close to half-spheres that slide on the boundary of a domain $\Omega$ while meeting it orthogonally and prove results similar to the ones we establish here. It must, however, be noted that in their situation, the orthogonality boundary condition arises naturally while we impose it artificially. Our natural boundary condition is the third-order one appearing in (\ref{introevlaw}).
General gradient flows with time-dependent constraints have been studied in \cite{timedepconst}.\\
The general methodology of these papers is a form of Lyapunov-Schmitt reduction and originates from Ye \cite{Ye}. It has also been employed in the study of related time-independent problems (see e.g. \cite{AK, lamm2009foliations,lamm2009small, pacard, Mondino,malchiodispheres}). An application to Willmore tori in Riemannian manifolds is considered in \cite{malchioditori}. For a general overview, we refer to \cite{malchiodireview}.\\

 The present article establishes similar results to the ones in \cite{bellettinifusco}. In doing so we face the difficulties of dealing with both a fourth-order and an initial boundary value problem. After an extensive search through the literature, it seems that such problems have not been studied using Ye's methodology.\\

Inspired by Theorem 1.6 in \cite{bellettinifusco}, for $K>0$ we introduce the subclass $\mathcal S^-(\lambda, K)\subset \mathcal S^{4,\gamma}(\lambda, K\lambda)$ which imposes an additional smallness requirement on the antisymmetric part of the graph function (see Subsection \ref{terminilogy} for a formal definition). The subclass $\mathcal S^-(\lambda,K)$ arises naturally from the analysis of Alessandroni and Kuwert \cite{AK} as the critical points they construct belong to it (for a proof of this see Appendix \ref{AKcritpoint}). We prove that all flow lines constructed in Theorem \ref{theorem1} eventually must enter and remain in $\mathcal S^-(\lambda, K)$ for some $K>0$ independent of $\lambda$ and use this observation to study the asymptotic behaviour of the associated Riemannian barycenter curves.

\begin{theorem}[Long-time behaviour of the flow]\label{theorem2}\hfill
\begin{enumerate}
    \item Let $\Omega,\lambda_0,\theta_0$ be as in Theorem \ref{theorem1} and $\lambda\leq\lambda_0$. Then there exists a time $T_0=T_0(\lambda,\theta_0)>0$ and $K>0$ independent of $\lambda$ and $\theta_0$ such that given $\phi_0\in \mathcal S(\lambda,\theta_0)$ the flow line $\phi(t)$ from Theorem \ref{theorem1} satisfies $\phi(t)\in \mathcal S^-(\lambda, K)$ for all $t\geq T_0$. 
    \item Let $\Omega\in C^{21}$, $M>0$ and denote the mean curvature of $S=\partial \Omega$ by $H^ S$. There exists $\lambda_0(M)>0$ such that for all $\lambda\leq\lambda_0$  and flow lines satisfying $\phi(t)\in\mathcal S^-(\lambda,M)$ for all times $t\geq 0$ the barycenter curve $\xi(t)$ satisfies the estimate 
$$\sup_{t\geq 0}\left|\dot\xi(t)-\frac3{2\lambda}\nabla H^S(\xi(t))\right|\leq C(M).$$
\end{enumerate}
\end{theorem}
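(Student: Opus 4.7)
The plan is to establish the two parts in succession. Throughout I would work in the coordinate system from Theorem~\ref{theorem1}, writing $\phi(t)$ as a normal graph of a function $u(t)$ over the round half-sphere of radius $\lambda$ centered at the Riemannian barycenter $\xi(t) \in S$, so that the flow is encoded in coupled evolution equations for $u(t)$ and $\xi(t)$.

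For Part 1, I would decompose $u = u^+ + u^-$ into its symmetric and antisymmetric parts under the reflection of the half-sphere through its equatorial plane, and derive a differential inequality for $\|u^-(t)\|^2$ in a suitable Sobolev norm. Antisymmetric functions are orthogonal to both the translation modes (which, being tangent to $S$, correspond to the barycenter kernel already fixed by the construction) and to the constant function coming from the area constraint, so the linearization of the area-preserving Willmore operator at a half-sphere, restricted to the antisymmetric subspace, should enjoy a spectral gap uniform in $\lambda$ after appropriate rescaling. Combining the resulting dissipation with estimates on the nonlinear coupling to $u^+$, which is controlled a priori by Theorem~\ref{theorem1}, and on the boundary contributions, Gronwall's inequality should force $\|u^-(t)\|_{C^{4,\gamma}}$ to decay exponentially to an equilibrium of size $O(\lambda)$, placing $\phi(t)$ in $\mathcal S^-(\lambda,K)$ for all $t \geq T_0$ with $K$ independent of $\lambda$ and $\theta_0$.

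For Part 2, I would derive the evolution of the barycenter from the defining identity $\xi(t) = C[\phi(t)]$: differentiating implicitly and inserting the flow equation (\ref{introevlaw}) expresses $\dot\xi(t) \in T_{\xi(t)}S$ as a projected integral over $\Sp^2_+$ of $-W(\phi)+\alpha H(\phi)$, weighted by the derivative of the barycenter functional. Performing an asymptotic expansion in $\lambda$, using expansions of the mean curvature and Willmore operator of a small half-sphere immersed in $\R^3$ sitting on $S$, the leading contributions from the symmetric part of $u$ and from the Lagrange multiplier $\alpha$ should cancel, while the translational mode should produce a residual of the form $\frac{3}{2\lambda}\nabla H^S(\xi(t))$. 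The hypothesis $\phi(t) \in \mathcal S^-(\lambda,M)$ is crucial here, as it confines the antisymmetric contributions, which a priori could be of order $\lambda^{-1}$, into an $O(1)$ remainder bounded by $C(M)$.

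The main obstacle in Part 1 is verifying the spectral gap uniformly in $\lambda$ on the antisymmetric subspace while simultaneously handling the third-order Neumann condition $\partial_\eta H + h^S(\nu,\nu)H = 0$ and the projection induced by the barycenter constraint, since standard spectral results on closed manifolds do not transfer verbatim to this initial-boundary-value setting. In Part 2 the main difficulty is isolating the precise coefficient $\frac{3}{2\lambda}$: several terms in the expansion (from $W$, from the reparametrization forced by the moving center $\xi(t)$, and from $\alpha$) each contribute at order $\lambda^{-1}$, and only their sum produces the clean prefactor, so the computation requires a careful bookkeeping of geometric expansions to sufficiently high order.
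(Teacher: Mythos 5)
Your Part~1 strategy has a real gap. The class $\mathcal S^-(\lambda,K)$ requires $\|u^-\|_{C^{4,\gamma}}\leq K\lambda^2$, but your argument only aims for an equilibrium of size $O(\lambda)$, which is one order short. A spectral-gap/Gronwall estimate by itself gives a bound on $u^-$ of the same order as the forcing, and the forcing in the evolution equation (coming from $\tilde g^{\xi,\lambda}-\delta$) is $O(\lambda)$, not $O(\lambda^2)$. The ingredient you are missing is the \emph{parity} of the leading metric perturbation: if $q_0:=\partial_\lambda\tilde g^{p,\lambda}|_{\lambda=0}$, then $q_0$, $D_2W[0,\delta]q_0$, the boundary contribution $D_2B_0[0,\delta]q_0$, and $D_2\psi_0[0,\delta]q_0$ are all \emph{even} under the equatorial reflection (Lemma~\ref{parity2}). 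Consequently, after linearizing, the odd part of the right-hand side of the $u$-equation is only $O(\lambda^2)$, and this is precisely what upgrades the decay of $u^-$ from $O(\lambda)$ to $O(\lambda^2)$. Separately, your claim that antisymmetric functions are orthogonal to the translation modes is incorrect with the paper's convention: $r(\vec\omega,\omega^3)=(-\vec\omega,\omega^3)$, so $\omega^1,\omega^2$ are themselves \emph{odd}. The translation modes are controlled not by parity but by the barycenter constraint $C[u,\tilde g^{\xi,\lambda}]=0$, which bounds the $\omega^i$-coefficients of $u_0^-$ by $C\lambda^2$; this is a distinct step (Estimate~(\ref{line3fgr})) that your sketch does not supply.

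For Part~2 your outline identifies the right target and correctly observes that the hypothesis $\phi(t)\in\mathcal S^-(\lambda,M)$ is what keeps the odd contributions from producing $\lambda^{-1}$-size errors. But the phrase ``performing an asymptotic expansion in $\lambda$'' hides the central technical obstacle: $u(t)$ is a single given solution, not a smooth one-parameter family in $\lambda$, so there is nothing to Taylor-expand a priori. The paper overcomes this (following Mattuschka) by constructing, for each fixed time $t$, an auxiliary $C^r$ path $\hat u_t:[0,\lambda]\to C^{4,\gamma}(\Sp^2_+)$ with $\hat u_t(0)=0$, $\hat u_t(\lambda)=u(t)$, $\hat u_t(s)\in\mathcal A_{\xi(t),s}^{C(M)}$, $\hat u_t'(0)$ even, and all $s$-derivatives bounded by $C(M,\Omega,r)$ uniformly in $\lambda$ (Lemma~\ref{map}). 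One then expands $y_{t,i}(s):=-sI^i[\hat u_t(s),\tilde g^{\xi(t),s}]$ to fourth order in $s$ and evaluates at $s=\lambda$; the parity facts again force $y_{t,i}(0)=y_{t,i}'(0)=y_{t,i}''(0)=0$, and a concrete computation gives $y_{t,i}'''(0)=9\partial_iH^S(\xi(t))$. Without the path construction and the parity cancellations, there is no way to justify that the ``several terms each contribute at order $\lambda^{-1}$'' sum to the clean coefficient $\tfrac{3}{2\lambda}$ with only $C(M)$ error; that bookkeeping is exactly what the construction is designed to make rigorous.
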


In particular, when introducing the \emph{fast time} $\tau:=\lambda^{-1} t$, and sending $\lambda\rightarrow 0^ +$ the flow collapses to a point that moves according to the ordinary differential equation 
\begin{equation}\label{gradflowh}
\frac {d}{d\tau}\xi(\tau)=\frac32\nabla H^S(\xi(\tau)).
\end{equation}

Let us motivate the structure of evolution equation (\ref{gradflowh}). The critical points $\phi^{a,\lambda}$ Alessandroni and Kuwert construct in \cite{AK} are labelled by their `radius' $\lambda$ and barycenter $a\in S$ (see Appendix \ref{AKcritpoint}). In their paper they derive the expansion 
\begin{equation}\label{introenegryformula}
\mathcal W[\phi^{a,\lambda}]=2\pi-\lambda\pi H^S(a)+\mathcal O(\lambda^2).
\end{equation}
Given a flow line  $\phi(t)\in S^-(\lambda, M)$ with barycenter $\xi(t):=C[\phi(t)]$ let us assume that Equation (\ref{introenegryformula}) holds, at least for large times. Then formally differentiating with respect to time and substituting (\ref{gradflowh}) we recover dissipation of energy:
\begin{equation}\label{Wdecayintro}
\frac d{dt}\mathcal W[\phi(t)]=-\frac {3\pi}2|\nabla H^ S(\xi(t))|^2+\mathcal O(\lambda)
\end{equation}
The structure of the barycenter equation is degenerate in the limit $\lambda\rightarrow 0^+$.  This leads to additional technical difficulties when studying Equation (\ref{introevlaw}) in the limit $\lambda\rightarrow 0^+$ that are absent in both the mean curvature and the Riemannian case (see \cite{Mattuschka,bellettinifusco}). This problem is overcome by altering the method followed in \cite{Mattuschka}.\\

Finally, we investigate the convergence of the flow (\ref{introevlaw}). 

\begin{theorem}[Convergence]\label{theorem3}\ \\
Let $\Omega\in C^{21}$ and assume that the mean curvature $H^S$ of $S$ only has finitely many critical points $p_1,\hdots,p_m$ all of which are nondegenerate. Then for any flow line from Theorem \ref{theorem1} there exists a critical point $\phi_*\in S^-(\lambda,K)$ of the elliptic problem (\ref{elliptic}) such that $\phi(t)\rightarrow\phi_*$ in $C^{4,\beta}$ for $t\rightarrow\infty$ and any $\beta<\gamma$. $\phi_\infty$ is of the form described in Appendix \ref{AKcritpoint}.
\end{theorem}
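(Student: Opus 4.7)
\textbf{Proof outline for Theorem~\ref{theorem3}.}
Fix a flow line $\phi(t)$ produced by Theorem~\ref{theorem1}. The plan is to show that the $\omega$-limit set of $\phi$ in $C^{4,\beta}$ is simultaneously connected (by general topology) and contained in a finite set (exploiting the Morse assumption on $H^S$), hence reduces to a single point.

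By Theorem~\ref{theorem2} part~1 there exists $T_0 \geq 0$ such that $\phi(t) \in \mathcal{S}^-(\lambda, K)$ for all $t \geq T_0$. The uniform $C^{4,\gamma}$-bound inherited from membership in $\mathcal{S}^{4,\gamma}(\lambda, \theta_1)$, together with the compact embedding $C^{4,\gamma} \hookrightarrow C^{4,\beta}$ for $\beta < \gamma$, makes $\phi([T_0, \infty))$ precompact in $C^{4,\beta}$. Writing the $\omega$-limit set as $\omega(\phi) = \bigcap_{T \geq T_0} \overline{\phi([T, \infty))}$ (closure in $C^{4,\beta}$) exhibits it as an intersection of nested, non-empty, compact and connected sets; hence $\omega(\phi)$ itself is non-empty, compact and connected. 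By Theorem~\ref{theorem1} part~2, every element of $\omega(\phi)$ is a critical point of (\ref{elliptic}), and by choice of $T_0$ each such point lies in $\overline{\mathcal{S}^-(\lambda, K)}$.

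The decisive step is to argue that this set of critical points is \emph{finite}. Revisiting the Lyapunov--Schmidt reduction underlying the Alessandroni--Kuwert construction recalled in Appendix~\ref{AKcritpoint}, any critical point in $\mathcal{S}^-(\lambda, K)$ can be written, after fixing the non-kernel directions via an implicit function theorem, as $\phi^{a, \lambda}$ for some $a \in S$ satisfying a reduced finite-dimensional equation whose leading order term is $\nabla H^S(a) = O(\lambda)$. Because each $p_i$ is nondegenerate, a second application of the implicit function theorem produces, for $\lambda$ sufficiently small, exactly one solution of the reduced equation near each $p_i$; moreover the smallness of the antisymmetric part of the graph function built into $\mathcal{S}^-(\lambda, K)$ precludes solutions far from $\{p_1, \dots, p_m\}$. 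Hence there are at most $m$ critical points of (\ref{elliptic}) in $\overline{\mathcal{S}^-(\lambda, K)}$.

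A non-empty connected subset of a finite set is a singleton, so $\omega(\phi) = \{\phi_*\}$ for some critical point $\phi_*$ of the form described in Appendix~\ref{AKcritpoint}, which yields $\phi(t) \to \phi_*$ in $C^{4,\beta}$ as $t \to \infty$. The main obstacle I anticipate is the finiteness step: existence of one critical point near each $p_i$ follows directly from \cite{AK}, but excluding additional critical points in $\mathcal{S}^-(\lambda, K)$ requires a uniqueness component of the reduction that is not stated explicitly there. I would address this by showing that every critical point inside $\mathcal{S}^-(\lambda, K)$ lies in the perturbative regime where the Lyapunov--Schmidt reduction provides local uniqueness, and then invoking that uniqueness on each component of the reduced moduli space.
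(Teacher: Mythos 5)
Your approach is genuinely different from the paper's, and the gap you flag at the end is the real one. You argue via connectedness of the $\omega$-limit set together with a claimed finiteness of the critical set in $\overline{\mathcal S^-(\lambda,K)}$; the paper instead proves a \emph{barycenter stabilization lemma} (Lemma~\ref{stabilizelemma}): using the ODE from Theorem~\ref{theorem2}~(ii) and the Morse property of $H^S$, the curve $\xi(t)=C[\phi(t)]$ is shown to eventually remain in a ball $U(p_i,\epsilon_0\lambda^{1/4})$ around a \emph{single} critical point $p_i$ of $H^S$. Once the barycenter is trapped near one $p_i$, the paper combines the subconvergence of Theorem~\ref{theorem1}~(2) with the Alessandroni--Kuwert local uniqueness result (Theorem~\ref{kuwertunique}, i.e.\ Theorem~3 of \cite{AK}) to exclude two distinct subsequential limits. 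In particular, the paper never needs to establish global finiteness of the set of critical points of (\ref{elliptic}) inside $\mathcal S^-(\lambda,K)$ — it only needs local uniqueness near one $p_i$.

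Concretely, the missing ingredient in your outline is that you never justify that every critical point of (\ref{elliptic}) lying in $\mathcal S^-(\lambda,K)$ has its barycenter within $O(\lambda)$ of one of the $p_i$. This does follow — apply Theorem~\ref{theorem2}~(ii) to the constant flow line at a critical point: $\dot\xi\equiv 0$ forces $|\nabla H^S(\xi)|\le C(K)\lambda$, and nondegeneracy then places $\xi$ near some $p_i$ — but you would need to say this. The "uniqueness component of the reduction" you worry about is exactly Theorem~\ref{kuwertunique}, which is already available in the paper; you do not need to re-derive it from the Lyapunov--Schmidt reduction. With those two points supplied, your connectedness argument closes cleanly and is arguably a bit more conceptual than the paper's direct barycenter-stabilization route (Lemma~\ref{stabilizelemma}), but it proves the stronger intermediate statement (finiteness of the critical set), whereas the paper's proof deliberately bypasses it.

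Also note a minor notational slip in the statement itself: the last sentence refers to $\phi_\infty$ where $\phi_*$ was introduced; they denote the same limiting critical point.
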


The proof is based on an idea from \cite{bahri}. Equation (\ref{Wdecayintro}) implies that the barycenter curve $\xi$ of a solution cannot stay far away from critical points for all times. As Equation (\ref{gradflowh}) implies that $H^S(\xi(t))$ is increasing along the flow, the assumption of finitely many critical points then implies a stabilisation of $\xi$ near a critical point of $H^S$. Using a uniqueness Theorem from \cite{AK} (that requires the nondegeneracy assumption) then allows us to establish convergence.

\section{Preliminaries}
\setcounter{equation}0
\subsection{Terminology}\label{terminilogy}
Let $\tilde g$ be a metric on $\R^3$ close to the euclidean metric $\delta$ in $C^ l$ for large enough $l\in\N$, consider a suitably smooth immersion $f:\Sp^2_+\rightarrow (\R^3,\tilde g)$ and put $g:=f^*\tilde g$. We denote the inner normal ($-\omega$ for the round half-sphere in euclidean space) of $f$ with respect to $\tilde g$ by $\tilde\nu$ and its inner conormal ($e_3$ for the round half-sphere in euclidean space) by $\tilde \eta$. We define the mean curvature $H[f,\tilde g]$ of $f$ with the convention that for the round half-sphere in euclidean space $H=2$. Let $h^ 0$ denote the traceless second fundamental form of $f$ and $\operatorname{Ric}_{\tilde g}$ the Ricci tensor of $(\R^3,\tilde g)$. The scalar Willmore gradient is then given by 
$$W[f,\tilde g]:=\frac12\left(\Delta_gH+(|h^ 0|^2+\operatorname{Ric}_{\tilde g}(\tilde\nu,\tilde\nu))H\right).$$
For a proof see Theorem 1 in \cite{AK} and note the following difference in conventions: We have included $1/2$ in the definition of $W$. Given a function $F[f,\tilde g]$ that is invariant under reparameterizations (e.g. the area $A$) we denote the gradient along the normal bundle by $\nabla F[f,\tilde g]$. That is, for $\psi:\Sp^2_+\rightarrow\R$
$$\frac d{ds}\bigg|_{s=0} F[f+s\psi\tilde\nu,\tilde g]=\int_{\Sp^2_+}\nabla F[f,\tilde g]\psi d\mu_{g}.$$
As an example, denoting the inclusion $\Sp^2_+\hookrightarrow\R^3$ by $f_0$ we have $\nabla A[f_0,\delta]=-H[f_0,\delta]=-2$.\\

$\Delta$ always denotes the standard Laplacian on $\Sp^2$.\\

For a bounded domain $\Omega\in \R^3$ of class $C^ n$ with large enough $n$ put $S:=\partial\Omega$ and denote the inner normal (that is pointing into $\Omega$) of $S$ by $N^S$. Let $p\in S$ and choose $C^{n-1}$-vector fields $b_1$ and $b_2$ in a neighbourhood $B_{r_0}(p)\cap S$ with $r_0$ independent of $p$ such that $(b_1(q), b_2(q))$ provides an orthonormal basis of $T_q S$ for each $q\in B_{r_0}(p)\cap S$. Near $p$ we have the local graph representation 
$$f[p,\cdot]: D_{r_1}\rightarrow\R^3,\ f[p,x]:=p+x^1 b_1(p)+x^2 b_2(p)+\varphi[p, x]N^S(p)$$
of $S$ defined on a disk $D_{r_1}:=\set{x\in\R^2\ | |x|<r_1}$ with $r_1>0$ independent of $p$. The map $S\times D_{r_1}\ni (p, x)\mapsto \varphi[p, x]\in\R$ is of class $C^{n-1}$, satisfies $\varphi[p,0]=0$, $D_2\varphi[p,0]=0$ and the estimates 
$$|\varphi[p, x]|\leq C(\Omega)|x|^2,\hspace{.3cm}|D_2\varphi[p, x]|\leq C(\Omega)|x|\hspace{.3cm}\text{and}\hspace{.3cm}\|\varphi\|_{C^{n-1}}\leq C(\Omega).$$
After potentially shrinking $r_1$, we extend the graph representation to a diffeomorphism 
$$F[p,\cdot]:D_{r_1}\times (-r_1, r_1)\rightarrow\operatorname{im}(F[p,\cdot])\subset \R^3,\ F[p,x,z]:=f[p, x]+zN^S(p).$$
For small $\lambda\in(0,\lambda_0(\Omega))$ we now introduce $F^\lambda:S\times Z_2\rightarrow\R^3$ ($Z_2:=\bar D_2\times [-2,2]$) by mapping $F^\lambda[p, x,z]:=F[p,\lambda x,\lambda z]$ and set 
\begin{equation}\label{metricdefinition}
\tilde g^{p,\lambda}:=\frac1{\lambda^2}\left(F^\lambda[p,\cdot]\right)^*\delta
\end{equation}
where $\delta$ denotes the euclidean metric on $\R^3$. We abbreviate $\partial_i\varphi[p,x]:=\partial_{x_i}\varphi[p,x]$. A quick computation (that is carried out in \cite{AK}) gives the concrete formula
\begin{equation}\label{metricformula}
\tilde g^{p,\lambda}=I_3+\begin{bmatrix}
\partial_1\varphi[p,\lambda x]\partial_1\varphi[p,\lambda x] & \partial_1\varphi[p,\lambda x]\partial_2\varphi[p,\lambda x] &  \partial_1\varphi[p,\lambda x]\\
 \partial_1\varphi[p,\lambda x]\partial_2\varphi[p,\lambda x] &  \partial_2\varphi[p,\lambda x]\partial_2\varphi[p,\lambda x] & \partial_2\varphi[p,\lambda x]\\
 \partial_1\varphi[p,\lambda x] & \partial_2\varphi[p,\lambda x] & 0
\end{bmatrix}.
\end{equation}
For $n\geq l+3$, the map $(p,\lambda)\mapsto \tilde g^{p,\lambda}\in C^ l(Z_2, M_3(\R))$ is of class $C^{n-l-3}$ as $\varphi\in C^{n-1}$. For regularity of such \emph{`meta maps'} we point to Appendix \ref{schauder}.

\paragraph{Functional spaces}\ \\
Let $\gamma\in(0,1)$ and $k\in\N$. We consider the Hölder spaces $C^{k,\gamma}(\Sp^2_+)$, $C^{k,\gamma}(\partial\Sp^2_+)$ and denote their norms by $\|\cdot\|_{C^{k,\gamma}}$ as long as the domain is clear from context.\\

Given $T>1$ and denoting the floor function by $[\cdot]^-$, we also consider the parabolic Hölder spaces $C^{k,[k/4]^-,\gamma}([0,T]\times \Sp^2_+)$ and $C^{k,[k/4]^-,\gamma}([0,T]\times \partial\Sp^2_+)$ of functions that are $k$-times continuously differentiable in space, $[k/4]^-$-times continuously differentiable in time and satisfy suitable Hölder conditions. A detailed definition is given in Appendix \ref{schauder}. We denote the norms by $\|\cdot\|_{C^{k,[k/4]^-,\gamma}_T}$ as long as the spatial domain is clear from context.\\

For given $U\subset \R^3$, we consider the spaces $C^{k,\frac\gamma4}([0,T],U)$ and denote their norms by $\|\cdot\|_{C^{1,\frac\gamma4}_T}$ as $U$ will always be clear from context. \\

If $X$ is some functional space, $x\in X$ and $r>0$ we put $X(x;r):=\set{\tilde x\in X\ |\ \|x-\tilde x\|_X\leq r}$. We write $X(r):=X(0;r)$.\\

\paragraph{Summation convention}\ \\
We use the following summation convention. Every repeated index is summed over. If the index is Latin, it takes the values $i=1,2$ and if it is Greek, it takes the values $\mu=1,2,3$ or $\mu=0,1,2$ (which of the two will always be clear from context). In ambiguous cases the summation symbol is included.

\paragraph{Almost half-spheres}\ \\
For $\Omega\subset \R^3$ and $S$ as described above and $\lambda>0$ let 
\begin{align*} 
\mathcal M^{4,\gamma}(S)&:=\set{\phi\in C^{4,\gamma}(\Sp^2_+,\R^3)\ |\ \text{immersed, $\phi(\partial \Sp^2_+)\subset S$ and $\phi\perp S$ along $\partial \Sp^2_+$}},\\
\mathcal M^{4,\gamma}_\lambda(S)&:=\set{\phi\in\mathcal M^{4,\gamma}(S)\ |\ A[\phi]=2\pi\lambda^2}.
\end{align*}
For $u\in C^{4,\gamma}(\Sp^2_+)$ with small enough $C^{4,\gamma}$-norm, $\lambda>0$ and $p\in S$ we consider 
$$f_u:\Sp^2_+\rightarrow\R^3,\ f_u(\omega):=(1+u(\omega))\omega\hspace{.5cm}\text{and}\hspace{.5cm}\phi_{p,u}^\lambda(\omega):=F^\lambda[p, f_u(\omega)].$$
We say that an immersion $\phi\in \mathcal M^{4,\gamma}_\lambda(S)$ is an  \emph{almost half-sphere of radius} $\lambda$ if it can be written as $\phi_{p,u}^\lambda$ and, for given $\theta>0$, put 
$$\mathcal S'(\lambda, \theta):=\set{\phi\in\mathcal M^{4,\gamma}_\lambda(\Sp^2_+)\ |\ \phi\text{ is an almost half-sphere of radius $\lambda$ with $\|u\|_{C^{4,\gamma}}<\theta$}}.$$
For small enough $\theta$ and $\lambda$ we prove in Appendix \ref{Barycenter} that there exists a nonlinear projection $C$ that maps $\phi\in\mathcal S'(\lambda,\theta)$ to a point $C[\phi]\in S$ that we refer to as its (Riemannian) barycenter. For the round half-sphere attached to $\R^2$ this definition coincides with the origin. Similarly an analogue projection $C$ for immersions $f_u:\Sp^2_+\rightarrow(\R^3,\tilde g)$ to $\R^2$ is constructed. These two projections satisfy 
\begin{equation}\label{barycenteridentity}
C[F^\lambda[p, f_u]]=F^\lambda[p, C[f_u,\tilde g^{p,\lambda}]].
\end{equation}
The concept of the Riemannian barycenter is originally due to Karcher \cite{karcher}. We use a slight variant of the local version introduced in \cite{AK}.
Finally, we prove in Appendix \ref{Barycenter} that for small enough $\lambda$ and $\theta_0$ each $\phi\in \mathcal S'(\lambda,\theta_0)$ may be \emph{parameterized over its barycenter}. That is, there exists a parameterization of the form $\phi=F^\lambda[C[\phi], f_u]$. The parameterization depends on the orthonormal frame chosen at $p$ but is unique once a frame is fixed. We define
$$\mathcal S(\lambda,\theta):=\set{\phi\in \mathcal S'(\lambda,\theta_0)\ |\ \text{$\phi$ is parameterized over its barycenter and $\|u\|_{C^{4,\gamma}}<\theta$}}.$$
If we need to make $\gamma$ visible in the notation we write $\mathcal S^{4,\gamma}(\lambda,\theta)$. Given a functional $F$ (e.g. the area $A$ or the Willmore operator $W$ etc.) mapping an immersion $f_u$ and a metric $\tilde g$ to e.g. $\R$ we write $F[u,\tilde g]:=F[f_u,\tilde g]$ and for $F$ invariant under reparameterizations $\nabla F[u,\tilde g]:=\nabla F[f_u,\tilde g]$

\paragraph{Parity}\ \\
We may write points $\omega\in \Sp^2_+$ as $(\vec\omega,\omega^3)$ with $\vec\omega\in \R^2$ and $\omega^3\geq 0$ and introduce the reflection $r:\Sp^2_+\rightarrow \Sp^2_+, (\vec\omega,\omega^3)\mapsto (-\vec\omega,\omega^3)$. A map $u:\Sp^2_+\rightarrow \R$ is said to be \emph{even} if $u\circ r=u$ and is called \emph{odd} if $u\circ r=-u$. It is readily checked that any function $u:\Sp^2_+\rightarrow \R$ possesses a unique decomposition $u=u^++u^-$ into an even part $u^+$ and an odd part $u^-$. For $\lambda$, $K>0$ we define 
$$\mathcal S^-(\lambda, K):=\set{\phi\in \mathcal S(\lambda, K\lambda)\ |\ \|u^-\|_{C^{4,\gamma}}\leq K\lambda^2}.$$

\subsection{The area preserving Willmore flow}\label{flow}
Given $\lambda>0$ and $\phi_0\in \mathcal M^{4,\gamma}_\lambda(S)$ we say that $\phi:[0,\infty)\rightarrow\mathcal M^{4,\gamma}_\lambda(S)$ solves the area preserving Willmore flow with initial value $\phi_0$ if
\begin{equation}\label{orgprob}
\left\{\hspace*{.3cm}\ \begin{aligned}
\langle\dot \phi(t),\nu\rangle&=-P_H^{\perp}\left[W(\phi(t))\right]\hspace{.5cm}&\text{for all $t>0$},\\
\frac{\partial \phi}{\partial\eta}&=N^S\circ \phi\hspace{.5cm}&\text{on $[0,\infty)\times\partial \Sp^2_+$},\\
\frac{\partial H}{\partial\eta}&+h^S(\nu,\nu)H=0\hspace{.5cm}&\text{on $[0,\infty)\times\partial \Sp^2_+$},\\
\phi(0)&=\phi_0.\\
\end{aligned}\right.\end{equation}
Here $\nu$ and $\eta$ denote the normal and conormal of $\phi$ respectively. Also, we  have introduced the projection operator $P_{H[\phi]}^\perp[W[\phi]]$, abbreviated by $P_H^\perp[W(\phi)]$, defined by
\begin{equation}\label{projectionop}
    P_H^{\perp}[W[\phi]]=W[\phi]-\frac{\int_{\Sp^2_+}W[\phi]H[\phi]d\mu_{\phi^*\delta}}{\int_{\Sp^2_+}H[\phi]^2d\mu_{\phi^*\delta}}H[\phi].
\end{equation}
This is precisely the structure appearing in (\ref{introevlaw}) with $\alpha$ chosen to ensure constant $A[\phi(t)]$. The first order boundary condition is a rephrasing of $\phi$ meeting $S$ orthogonal along $\partial \Sp^2_+$. The third order boundary condition is natural for two reasons. First it is automatically satisfied for critical points of $\mathcal W$ (see \cite{AK}). Second it is crucial to establish optimal dissipation of Willmore energy. Indeed, let $t\geq 0$ and consider the variation $\epsilon\mapsto \phi(t+\epsilon)$ of $\phi(t)$. We may decompose the variational vector field as $\vec V=\varphi\nu+D\phi\xi$ where $\varphi$ is  a scalar function $\xi$ is a tangential vector field. As $\phi(t+\epsilon)\in\mathcal M^{4,\gamma}_\lambda(S)$ for all $\epsilon>0$ this variation is \emph{admissible} in the sense described in \cite{AK} (see Equation (1.17) and the proceeding analysis) and must therefore satisfy 
\begin{equation}\label{akadmissible} 
(\phi^*\delta)(\xi,\eta)=0\hspace{.5cm}\text{and}\hspace{.5cm}\frac{\partial\varphi}{\partial\eta}+\varphi h^S(\nu,\nu)=0\hspace{.3cm} \text{on $\partial \Sp^2_+$}.
\end{equation}
Applying Theorem 1 from \cite{AK} now yields
\begin{align*} 
\frac d{d\epsilon}\bigg|_{\epsilon=0}\mathcal W[\phi(t+\epsilon)]&=\int_{\Sp^2_+}W[\phi(t)]\varphi d\mu_\phi+\frac12\int_{\partial \Sp^2_+}\left(\varphi\frac{\partial H}{\partial\eta}-\frac{\partial\varphi}{\partial\eta}H-\frac12 H^2 g(\xi,\eta)\right)dS_\phi\\
&\overset{\substack{(\ref{orgprob})\\ (\ref{akadmissible})}}=-\int_{\Sp^2_+}W[\phi(t)]]P_H^\perp[W[\phi(t)]]d\mu_\phi+\frac12\int_{\partial \Sp^2_+}\varphi\left(\frac{\partial H}{\partial\eta}+Hh^S(\nu,\nu)\right)dS_\phi\\
&\overset{(\ref{orgprob})}=-\int_{\Sp^2_+}|P^\perp_{H[\phi(t)]}(W[\phi(t)])|^2 d\mu_{\phi}.
\end{align*}
Given a suitable $\phi_0\in S(\lambda,\theta)$ we construct a solution $\phi:[0,\infty)\rightarrow \mathcal \mathcal S(\lambda,\theta)$ to (\ref{orgprob}). Once such a solution is constructed it is standard to construct $\phi_*(t)$ that solves the same boundary and initial conditions as well as the evolution equation
\begin{equation}\label{fulleqstar}
\dot\phi_*(t)=-P_H^\perp[W[\phi_*(t)]\nu.
\end{equation}
Indeed, once a solution $\phi$ to (\ref{orgprob}) is constructed one can follow the analysis that is e.g. presented in \cite{stahl} to deduce the existence of $\phi_*$ by considering a reparameterization $\phi_*(t,\omega):=\phi(t,\psi(t,\omega))$. Inserting this an an ansatz into the evolution equation $(\ref{fulleqstar})$ gives the following ordinary differential equation (see e.g. \cite{stahl}) for $\psi$:
\begin{equation}\label{ode}\left\{
\begin{aligned}
\frac{\partial \psi(t,\omega)}{\partial t}&=-\left(D_2\phi(t,\psi(t,\omega))\right)^{-1}\left(D_1\psi(t,\psi(t,\omega))\right)^T,\\
\psi(0,\omega)&=\omega.
\end{aligned}\right.
\end{equation}
Here $T$ denotes component of the velocity vector tangential to $\phi$.\\

\paragraph{The elliptic problem}\ \\
We also state the formulation of the corresponding elliptic problem studied by Alessandroni and Kuwert in \cite{AK} as it is needed for the formulation of Theorem \ref{theorem3}.
\begin{equation}\label{elliptic}
\left\{\begin{aligned}
P_H^{\perp}&\left[W(\phi_0)\right]=0\hspace{.5cm}&\text{in $\Sp^2_+$}\\
\frac{\partial \phi_0}{\partial\eta}&=N^S\circ \phi_0\hspace{.5cm}&\text{on $\partial \Sp^2_+$}\\
\frac{\partial H}{\partial\eta}&+h^S(\nu,\nu)H=0\hspace{.5cm}&\text{on $\partial \Sp^2_+$}\\
\end{aligned}\right.
\end{equation}

\subsection{Flow equation in a moving reference frame}
Suppose that we have a solution $\phi:[0,\infty)\rightarrow\mathcal S(\lambda,\theta)$ to (\ref{orgprob}) with initial barycenter $C[\phi(0)]=p_0\in S$ and have chosen an orthonormal frame $b_i$ in a neighbourhood of $p_0$. Putting $\xi(t):=C[\phi(t)]$ to be the Riemannian barycenter of $\phi(t)$, the expression  $b_i(\xi(t))$ is then well well defined for small times and we may write 
\begin{equation}\label{ansatz}
\phi(t)=F^\lambda[\xi(t), (1+u(t,\omega))\omega].
\end{equation}
Inserting this ansatz into (\ref{orgprob}) must then give equations for the graph function $u$ and the curve $\xi$. We abbreviate the time-dependent metric $\tilde g^{\xi(t),\lambda}$ by $\tilde g^{\xi,\lambda}$. Recall that we denote the inner normal of $f_u$ with respect to $\tilde g$ by $\tilde\nu$ and its conormal by $\tilde \eta$ to distinguish them from the corresponding quantities for $\phi$ which do not carry the tilde. 

\paragraph{The graph function equation}\ \\ 
Inserting (\ref{ansatz}) into the evolution equation from (\ref{orgprob}) gives
$$-P_{H[\phi(t)]}^\perp\left(W[\phi(t)]\right)=\langle\dot\phi(t),\nu\rangle=\langle D_1 F^\lambda[\xi(t), f_u(t)]\dot\xi(t),\nu\rangle+\langle D_2 F^\lambda[\xi(t), f_u(t)]\dot f_u(t), \nu\rangle.$$
We use that $W$ and $H$ are invariant under diffeomorphisms to get $W[\phi(t)]=W[\lambda f_u,F[\xi(t),\cdot]^*\delta]$, $H[\phi(t)]=H[\lambda f_u,F[\xi(t),\cdot]^*\delta]$ and substitute the scaling behaviour of $W$, $H$ to find 
\begin{equation}\label{kooks1}
-\frac1{\lambda^3}P_{H[f_u,\tilde g^{\xi,\lambda}]}^\perp W[f_u,\tilde g^{\xi,\lambda}]=\langle D_1 F^\lambda[\xi(t), f_u(t)]\dot\xi(t),\nu\rangle+\langle D_2 F^\lambda[\xi(t), f_u(t)]\dot f_u(t), \nu\rangle.
\end{equation}
Here we have introduced the $L^2(f_u^*\tilde g^{\xi,\lambda})$-orthogonal projection onto $H[f_u,\tilde g^{\xi,\lambda}]^\perp$ that is defined as in Equation (\ref{projectionop}). Next we rewrite the right hand side of this equation by exploiting the formula
$$
\tilde\nu=\lambda\left(D_2 F^\lambda[\xi(t), f_u(t)]\right)^{-1}\nu.
$$
The factor of $\lambda$ is due to the scaling in the definition of $\tilde g^{\xi,\lambda}$ in Equation (\ref{metricdefinition}). This formula implies the identities
\begin{align} 
&\langle D_2 F^\lambda[\xi(t), f_u(t)]\dot f_u(t), \nu\rangle=\lambda \tilde g^{\xi,\lambda}[\dot f_u, \tilde\nu]\label{kooks2}\\
&\langle D_1 F^\lambda[\xi(t), f_u(t)]\dot\xi(t),\nu\rangle=\lambda \tilde g^{\xi,\lambda}\left[\left(D_2 F^\lambda[\xi(t), f_u(t)]\right)^{-1} D_1 F^\lambda[\xi(t), f_u(t)]\dot\xi(t),\tilde\nu\right]\label{kooks3}.
\end{align}
Inserting Equations (\ref{kooks2}) and (\ref{kooks3}) into (\ref{kooks1}) we find
\begin{equation}\label{atomheartmother}
\begin{aligned}
 \tilde g^{\xi,\lambda}[\dot f_u, \tilde\nu]=-\frac1{\lambda^4}&P_{H[f_u,\tilde g^{\xi,\lambda}]}^\perp W[f_u,\tilde g^{\xi,\lambda}]\\
-&\tilde g^{\xi,\lambda}\left[\left(D_2 F^\lambda[\xi(t), f_u(t)]\right)^{-1} D_1 F^\lambda[\xi(t), f_u(t)]\dot\xi(t),\tilde\nu\right].
\end{aligned}
\end{equation}
The identities  $A[\phi(t)]=2\pi\lambda^2$ and $C[\phi(t)]=\xi(t)$ imply $A[f_u,\tilde g^{\xi,\lambda}]=2\pi$ and $C[f_u,\tilde g^{\xi,\lambda}]=0$ (see Equation (\ref{barycenteridentity})). We put $K[u,\tilde g^{\xi,\lambda}]:=\operatorname{span}_\R(\nabla A[f_u,\tilde g^{\xi,\lambda}],\nabla C^i[f_u,\tilde g^{\xi,\lambda}])$ where $i=1,2$. In Appendix \ref{Kspace} we examine this space in more detail. In particular we show that for small enough $\|u(t)\|_{C^{4,\gamma}}$ and $\lambda$ we may consider the $L^2(f_u^*\tilde g^{\xi,\lambda})$-projection operators $P_{K[u,\tilde g^{\xi,\lambda}]}$ and $P^\perp_{K[u,\tilde g^{\xi,\lambda}]}$ onto $K[u,\tilde g^{\xi,\lambda}]$ and its $L^2(f_u^*\tilde g^{\xi,\lambda})$-orthogonal complement $K[u,\tilde g^{\xi,\lambda}]^\perp$. Differentiating $A[f_u,\tilde g^{\xi,\lambda}]=2\pi$ and $C^i[f_u,\tilde g^{\xi,\lambda}]=0$ with respect to time gives
\begin{align*}
    0=&\langle H[f_u,\tilde g^{\xi,\lambda}], \tilde g^{\xi,\lambda}(\dot f_u,\tilde\nu)\rangle_{L^2(f_u^*\tilde g^{\xi,\lambda})}+D_2 A[f_u,\tilde g^{\xi,\lambda}]\dot{\tilde g}^{\xi,\lambda},\\
    0=&\langle \nabla C^i[f_u,\tilde g^{\xi,\lambda}], \tilde g^{\xi,\lambda}(\dot f_u,\tilde\nu)\rangle_{L^2(f_u^*\tilde g^{\xi,\lambda})}+D_2 C^i[f_u,\tilde g^{\xi,\lambda}]\dot{\tilde g}^{\xi,\lambda}.
\end{align*}
Putting $C^ 0:=A$ and $\psi_\mu:=\|\nabla C^\mu[f_u,\tilde g^{\xi,\lambda}]\|_{L^2(f_u^*\tilde g^{\xi,\lambda})}^{-1}\nabla C^\mu[f_u,\tilde g^{\xi,\lambda}]$ for $\mu=0,1,2$ we may rewrite these equations as 
$$\langle \psi_\mu[f_u,\tilde g^{\xi,\lambda}], \tilde g^{\xi,\lambda}(\dot f_u,\tilde\nu)\rangle_{L^2(f_u^*\tilde g^{\xi,\lambda})}=-\frac{D_2 C^\mu[f_u,\tilde g^{\xi,\lambda}]\dot{\tilde g}^{\xi,\lambda}}{\|\nabla C^\mu[f_u,\tilde g^{\xi,\lambda}]\|_{L^2(f_u^*\tilde g^{\xi,\lambda})}}\hspace{.5cm}\text{for }\mu=0,1,2.$$
Put $A_{\mu\nu}:=\langle\psi_\mu,\psi_\nu\rangle_{L^2(f_u^*\tilde g^{\xi,\lambda})}$. In Appendix \ref{Kspace} we argue that we may invert the matrix $A_{\mu\nu}$ if $\lambda$ and $\|u(t)\|_{C^{4,\gamma}}$ are small. We denote the inverse matrix by $A^{\mu\nu}$. Multiplying the last equation with $A^{\mu\nu}\psi_\nu$, summing over $\mu,\nu$ and using Equation (\ref{projectionKspacedefinition}) we get
\begin{equation}\label{atomheartmother3}
P_{K[u,\tilde g^{\xi,\lambda}]}(\tilde g^{\xi,\lambda}(\dot f_u,\tilde\nu))=-\sum_{\mu,\nu=0}^2A^{\mu\nu}[u,\tilde g^{\xi,\lambda}]\frac{D_2 C^\mu[f_u,\tilde g^{\xi,\lambda}]\dot{\tilde g}^{\xi,\lambda}}{\|\nabla C^\mu[f_u,\tilde g^{\xi,\lambda}]\|_{L^2(f_u^*\tilde g^{\xi,\lambda})}}\psi_\nu.
\end{equation}
Equation (\ref{atomheartmother3}) gives us the component of $\tilde g^{\xi,\lambda}(\dot f_u,\tilde\nu)$ inside the space $K[u,\tilde g^{\xi,\lambda}]$. The component inside $K[u,\tilde g^{\xi,\lambda}]^\perp$ can be derived by applying the projection $P_{K[u,\tilde g^{\xi,\lambda}]}^\perp$ to Equation (\ref{atomheartmother}). Combining these two components we find  
\begin{equation}\label{atomheartmother4}
\begin{aligned}
\tilde g^{\xi,\lambda}(\dot f_u,\tilde\nu)=&\tau[u,\tilde g^{\xi,\lambda}]-P_{K[u,\tilde g^{\xi,\lambda}]}^\perp\left[\frac1{\lambda^4} W[f_u,\tilde g^{\xi,\lambda}]\right]\\
&-P_{K[u,\tilde g^{\xi,\lambda}]}^\perp\left[\tilde g^{\xi,\lambda}\left[\left(D_2 F^\lambda[\xi(t), f_u(t)]\right)^{-1} D_1 F^\lambda[\xi(t), f_u(t)]\dot\xi(t),\tilde\nu\right]\right]
\end{aligned}
\end{equation}
where we have introduced 
\begin{equation}\label{atomheartmother5}
\tau[u,\tilde g^{\xi,\lambda}]:=-\sum_{\mu,\nu=0}^2A^{\mu\nu}[u,\tilde g^{\xi,\lambda}]\frac{D_2 C^\mu[u,\tilde g^{\xi,\lambda}]\dot{\tilde g}^{\xi,\lambda}}{\|\nabla C^\mu[u,\tilde g^{\xi,\lambda}]\|_{L^2(f_u^*\tilde g^{\xi,\lambda})}}\psi_\nu.
\end{equation}

\paragraph{The barycenter equation}\ \\
Next, we derive an equation for the barycenter curve $\xi(t):=C[\phi(t)]$. For that purpose we consider small $\epsilon>0$ and express $\xi(t+\epsilon)$ in the chart $f[\xi(t),\cdot]$ centered at $\xi(t)$. That is, 
\begin{align*}
    \xi(t+\epsilon)&=f[\xi(t),(\xi^1(t+\epsilon),\xi^2(t+\epsilon))]\\
    &=\xi(t)+\sum_{i=1}^2\xi^i(t+\epsilon)b_i(\xi(t))+\varphi[\xi(t),(\xi^1(t+\epsilon),\xi^2(t+\epsilon))]N^S(\xi(t)).
\end{align*}
Clearly $\xi^i(t)=0$. Taking the scalar product with $b_i(\xi(t))$ and differentiating with respect to $\epsilon$ at $\epsilon=0$ we learn 
\begin{equation}\label{iamsotired}
\dot\xi^i(t)=\langle\dot\xi(t),b_i(\xi(t))\rangle.
\end{equation}
Recall $\phi(t)=F^\lambda[\xi(t), f_{u(t)}]$ and write $\phi(t+\epsilon)=F^\lambda[\xi(t),f_{v(t,\epsilon)}]$. Then clearly $v(t,0)=u(t)$. We wish to express the equation $\xi(t+\epsilon)=C[\phi(t+\epsilon)]$ using the local diffeomorphism $F^\lambda[\xi(t),\cdot]$ centered at $\xi(t)$. To do this note $\xi(t+\epsilon)=f[\xi(t),(\xi^1(t+\epsilon),\xi^2(t+\epsilon))]=F^\lambda[\xi(t),\lambda^{-1}(\xi^1(t+\epsilon),\xi^2(t+\epsilon)),0]$. Using Identity (\ref{barycenteridentity}) we get
\begin{equation}\label{hyppinessisawarmgun}
\lambda^{-1}\xi^i(t+\epsilon)=C^i[f_{v(t,\epsilon)},\tilde g^{\xi,\lambda}].
\end{equation}
Differentiating the left hand side at $\epsilon=0$ gives $\lambda^{-1}\dot\xi^i(t)$. We now differentiate the right hand side. To do so we investigate $\partial_\epsilon v(t,\epsilon)$ at $\epsilon=0$. Following the derivation for the graph function from the previous paragraph we can use Equation (\ref{atomheartmother}) but must drop the last term as it derived from differentiating the time-dependent chart which we do not have here. This gives
\begin{equation}\label{marthamydear}
\tilde g^{\lambda, \xi(t)}\left(\frac{\partial f_v}{\partial\epsilon}\bigg|_{\epsilon=0},\tilde\nu\right)=-\frac1{\lambda^4} P_{H[f_{u(t)},\tilde g^{\xi,\lambda}]}^\perp\left(W[f_{u(t)},\tilde g^{\xi,\lambda}]\right).
\end{equation}
Here we have used that $v(t, 0)=u(t)$. Differentiating Equation (\ref{hyppinessisawarmgun}) with respect to $\epsilon$ at $\epsilon=0$ and substituting Equations (\ref{iamsotired}) and (\ref{marthamydear}) we get the evolution equation for $\xi$:
\begin{equation}\label{barycentereq}
\dot\xi^i(t)=\langle\dot\xi(t), b_i(\xi(t))\rangle=-\frac1{\lambda^3}\langle \nabla C^i[f_{u(t)},\tilde g^{\xi,\lambda}],P_{H[f_{u(t)},\tilde g^{\xi,\lambda}]}^\perp\left(W[f_{u(t)},\tilde g^{\xi,\lambda}]\right)\rangle_{L^2(f_u^*\tilde g^{\xi,\lambda})}
\end{equation}

\paragraph{Equivalence to the Flow}\label{equivproff}\ \\
We now prove that $\phi(t):=F^\lambda[\xi(t),f_{u(t)}]$ solves the evolution equation (\ref{orgprob}) if (\ref{atomheartmother4}) and (\ref{barycentereq}) are satisfied. Using similar manipulations as in the derivation of (\ref{atomheartmother}) we find
\begin{align} 
\langle\dot\phi(t),\nu\rangle&=\lambda\tilde g^{\xi,\lambda}[\dot f_u, \tilde\nu]+\lambda\underbrace{\tilde g^{\xi,\lambda}\left[\left(D_2 F^\lambda[\xi(t), f_u(t)]\right)^{-1} D_1 F^\lambda[\xi(t), f_u(t)]\dot\xi(t),\tilde\nu\right]}_{=:X}\nonumber\\
&\overset{(\ref{atomheartmother4})}=\lambda \tau[u,\tilde g^{\xi,\lambda}]-\frac1{\lambda^3}P_{K[u,\tilde g^{\xi,\lambda}]}^ \perp(W[f_u,\tilde g^{\xi,\lambda}])+\lambda P_{K[u,\tilde g^{\xi,\lambda}]}(X).\label{equivequation}
\end{align}
Next we rewrite the definition of $\tau$ in Equation (\ref{atomheartmother5}) by computing $D_2 C^\mu[u,\tilde g^{\xi,\lambda}]\dot{\tilde g}^{\xi,\lambda}.$ This computation is moved to Appendix \ref{metricderivative} as it is quite lengthy. There we show
\begin{equation}\label{tauequation}
\tau[u,\tilde g^{\xi,\lambda}]=-P_{K[u,\tilde g^{\xi,\lambda}]}(X)+\sum_{\nu=0}^2\sum_{i=1}^2\frac{A^{i\nu}[u,\tilde g^{\xi,\lambda}]\dot\xi^i}{\lambda\|\nabla C^i[f_u,\tilde g^{\xi,\lambda}]\|_{L^2(f_u^*\tilde g^{\xi,\lambda})}}\frac{\nabla C^\nu[f_u,\tilde g^{\xi,\lambda}]}{\|\nabla C^\nu[f_u,\tilde g^{\xi,\lambda}]\|_{L^2(f_u^*\tilde g^{\xi,\lambda})}}.
\end{equation}
Inserting this formula into (\ref{equivequation}) and substituting Equation (\ref{barycentereq}) for $\dot\xi^i(t)$  we find 
\begin{align*} 
\langle\dot\phi(t),\nu\rangle=&-\frac1{\lambda^3}P_{K[u,\tilde g^{\xi,\lambda}]}^\perp(W[f_u,\tilde g^{\xi,\lambda}])\\
&-\sum_{\nu=0}^2\sum_{i=1}^2\frac{A^{i\nu}[u,\tilde g^{\xi,\lambda}]\langle \nabla C^i[f_{u(t)},\tilde g^{\xi,\lambda}],P_{H[f_{u(t)}, \tilde g^{\xi,\lambda}]}^\perp W[f_{u(t)},\tilde g^{\xi,\lambda}]\rangle_{L^2(f_u^*\tilde g^{\xi,\lambda})} }{\lambda^3\|\nabla C^i[f_u,\tilde g^{\xi,\lambda}]\|_{L^2(f_u^*\tilde g^{\xi,\lambda})}}\psi_\nu[u,\tilde g^{\xi,\lambda}].
\end{align*}
We may as well take the summation index $i$ in the second sum to also run over $0$ as $C^0=A$ implies $\langle\nabla C^0[f_u,\tilde g^{\xi,\lambda}], P_{H[u,\tilde g^{\xi,\lambda}]}^\perp(\hdots)\rangle=0$. So
\begin{align*} 
\langle\dot\phi(t),\nu\rangle&=-\frac1{\lambda^3}P_{K[u,\tilde g^{\xi,\lambda}]} ^\perp(W[f_u,\tilde g^{\xi,\lambda}])-\frac1{\lambda^3}P_{K[u,\tilde g^{\xi,\lambda}]}(P_{H[u,\tilde g^{\xi,\lambda}]}^\perp W[f_u,\tilde g^{\xi,\lambda}])\\
&=-\frac1{\lambda^3}P_{H[f_u,\tilde g^{\xi,\lambda}]}^\perp(W[f_u,\tilde g^{\xi,\lambda}]).
\end{align*}
Here we have used $H[f_u,\tilde g^{\xi,\lambda}]=\nabla C^0[f_u,\tilde g^{\xi,\lambda}]\in K[u,\tilde g]$ and hence $P_K^\perp (H[f_u,\tilde g^{\xi,\lambda}])=0$ to argue $P_K^\perp=P_K^\perp\circ P_H^\perp$. 
Finally, we use the diffeomorphism invariance and scaling of $W$ and $H$ to deduce
$$\langle\dot\phi(t),\nu\rangle=-P_{H[\phi(t)]}^\perp(W[\phi(t)]).$$

\paragraph{Rewriting the system}\ \\
In principal equations (\ref{atomheartmother4}) and (\ref{barycentereq}) constitute the system we must examine. We may however rewrite it as follows: Consider a pair $(u,\xi)$ that solves (\ref{atomheartmother4}) and (\ref{barycentereq}). First we write $\dot\xi(t)=\sum_{i=1}^2\langle\dot\xi(t), b_i(\xi(t))\rangle b_i(\xi(t))$. Now insert the evolution equation for $\langle\dot\xi(t), b_i(\xi(t))\rangle$ and use this formula to eliminate $\dot\xi(t)$ from the last term in Equation (\ref{atomheartmother4}). Putting 
\begin{align*} 
\tilde J_i[u,\xi,\lambda]&:=\tilde g^{\xi,\lambda}\left[\left(D_2 F^\lambda[\xi(t), f_u(t)]\right)^{-1} D_1 F^\lambda[\xi(t), f_u(t)]b_i(\xi(t)),\tilde\nu\right]\\
I^ i[u,\tilde g^{\xi,\lambda}]&:=\langle W[u,\tilde g], P_{H[u,\tilde g^{\xi,\lambda}]}^\perp\nabla C^ i[u,\tilde g^{\xi,\lambda}]\rangle_{L^2(f_u^*\tilde g^{\xi,\lambda})}
\end{align*}
for $i=1,2$, we derive the following system (note the summation convention):
\begin{align*}
    \tilde g^{\xi,\lambda}(\dot f_u,\tilde\nu)=&\tau[u,\tilde g^{\xi,\lambda}]-P_{K[u,\tilde g^{\xi,\lambda}]}^\perp\left[\frac1{\lambda^4} W[u,\tilde g^{\xi,\lambda}]\right]+\frac1{\lambda^3}I^i[u,\tilde g^{\xi,\lambda}]P_{K[u,\tilde g^{\xi,\lambda}]}^\perp[\tilde J_i[u,\xi,\lambda]]\\
    \langle\dot\xi(t),b_i(\xi(t))\rangle=&-\frac1{\lambda^3}I^i[u,\tilde g^{\xi,\lambda}]
\end{align*}
Clearly any solution to this pair of equations also solves (\ref{atomheartmother4}) and (\ref{barycentereq}). Finally, we rescale the evolution equations by introducing $u'(t,\omega):=u(\lambda^4 t,\omega)$ and $\xi'(t):=\xi(\lambda^4 t)$. We do not write $u'$ and $\xi'$ however. From now on $u$ and $\xi$ will represent the rescaled quantities. They satisfy
\begin{equation}\label{evolutionlaws}
\left\{\begin{aligned}
    \tilde g^{\xi,\lambda}(\dot f_u,\tilde\nu)=&\tau[u,\tilde g^{\xi,\lambda}]-P_{K[u,\tilde g^{\xi,\lambda}]}^\perp\left[ W[u,\tilde g^{\xi,\lambda}]\right]+\lambda I^i[u,\tilde g^{\xi,\lambda}]P_{K[u,\tilde g^{\xi,\lambda}]}^\perp[\tilde J_i[u,\xi,\lambda]]\\
    \langle\dot\xi(t),b_i(\xi(t))\rangle=&-\lambda I^i[u,\tilde g^{\xi,\lambda}].
\end{aligned}\right.\end{equation}

\paragraph{Boundary conditions}\ \\
Next we rewrite the boundary conditions in terms of $u$ and $\xi$. As $\tilde g^{\xi,\lambda}=\lambda^{-2}F^\lambda[\xi(t),\cdot]^*\delta$ we have
\begin{equation}\label{boudaryconditionsequi}
\def\arraystretch{2.6}
\begin{array}{lcl}
\displaystyle\frac{\partial\phi}{\partial\eta}=N^S\circ \phi & \displaystyle\Leftrightarrow &  \displaystyle\frac{\partial f_u}{\partial\tilde\eta}=\tilde\nu_{\R^2}\circ f_u.\\
\displaystyle\frac{\partial H}{\partial \eta}+Hh^S(\nu,\nu)=0 & \displaystyle\Leftrightarrow &  \displaystyle\frac{\partial H}{\partial\tilde \eta}+H\tilde h^{\R^2}(\tilde\nu,\tilde\nu)=0.
\end{array}
\end{equation}
Here $\tilde\nu_{\R^2}$ denotes the normal of $\R^2\times \set0$ with respect to $\tilde g^{\xi,\lambda}$ that coincides with $e_3$ in the euclidean case. We now reformulate the first condition. Let $\tilde \tau$ denote (one of) the unit tangential vector fields along $\partial \Sp^2_+$. Then of course $Df_u\tilde \tau$, $Df_u\tilde \eta$ and $\tilde \nu$ are defined at each point on the boundary of $f_u$ and constitute a $\tilde g^{\xi,\lambda}$-orthogonal basis for $\R^3$. We now have 
$$Df_u\tilde \eta=\tilde\nu_{\R^2}\Leftrightarrow Df_u\tilde \eta\perp_{\tilde g^{\xi,\lambda}}\R^2\overset{(!)}\Leftrightarrow\tilde\nu\in\R^2\Leftrightarrow\tilde g^{\xi,\lambda}(\tilde\nu,\tilde\nu_{\R^2})=0.$$
In the step marked by $(!)$ we used $(Df\tilde \eta)^\perp=\operatorname{span}_\R\set{Df_u\tilde \tau,\tilde\nu}$.
We summarize these boundary conditions as $B[u,\tilde g]=0$ where $\tilde g=\tilde g^{\xi,\lambda}$ and 
$$B[u,\tilde g]:=\left(\tilde g(\tilde\nu,\tilde\nu_{\R^2}), \frac{\partial H}{\partial\tilde\eta}+H\tilde h^ {\R^2}(\tilde\nu,\tilde \nu)\right).$$

\paragraph{Initial values}\ \\
Finally, we discuss possible initial values $u_0$ and $p:=\xi(0)$ for the evolution equations (\ref{evolutionlaws}). We say that a pair $(u_0, p)\in C^{4,\gamma}(\Sp^2_+)\times S$ is \emph{admissible} if 
\begin{enumerate}[(\textrm{A}1)]
    \item it is compatible with the boundary conditions. That is, $B[u_0,\tilde g^{p,\lambda}]=0$,
    \item it satisfies $A[u_0,\tilde g^{p,\lambda}]=2\pi$ and $C[u_0,\tilde g^{p,\lambda}]=0$. 
\end{enumerate}
The first condition must clearly be true by the formulation of problem (\ref{orgprob}). The second condition is derived from the fact that we formulate (\ref{orgprob}) inside the set $\mathcal S(\lambda,\theta)$ which requires $A[\phi(t)]=2\pi\lambda^2$ and hence $A[u_0,\tilde g^{p,\lambda}]=2\pi$ as well as  $\xi(t)=C[\phi(t)]$ which is true only if $C[u(t),\tilde g^{\xi(t),\lambda}]=0$ (see Equation (\ref{barycenteridentity}) and Appendix \ref{Barycenter}).\\

If the point $p$ is clear from context we refer to a function $u_0\in C^{4,\gamma}(\Sp^2_+)$ as an admissible initial value if the pair $(u_0, p)$ is an admissible initial value.

\paragraph{Conclusion}\ \\
Let $(u_0,p)$ be an admissible initial value. Abbreviating $\tilde g:=\tilde g^{\xi,\lambda}$ we are looking for a solution to the following system of equations:
\begin{equation}\label{system}
\left\{\begin{aligned}
    &\tilde g(\dot f_u,\tilde\nu)=\tau[u,\tilde g]-P_{K[u,\tilde g]}^\perp\left[ W[u,\tilde g]\right]+\lambda I^i[u,\tilde g]P_{K[u,\tilde g]}^\perp[\tilde J_i[u,\xi,\lambda]],\\
    &\langle\dot\xi(t),b_i(\xi(t))\rangle=-\lambda I^i[u,\tilde g],\\
    &B[u,\tilde g]=0\\
    &u(0)=u_0\\
    &\xi(0)=p
\end{aligned}\right.
\end{equation}

\section{An abstract perturbation problem}
\setcounter{equation}0
Let $T>1$ and denote the inner conormal of $\Sp^2_+\subset(\R^3,\delta)$ by $\eta$. We define the spaces 
\begin{align*}
    X_T&:=\left\{w\in C^{4,1,\gamma}([0,T]\times \Sp^2_+)\ \bigg|\ \frac{\partial w}{\partial\eta}=\frac{\partial\Delta w}{\partial\eta}=0\right\},\\
    Y_T&:=\left\{\tilde w\in C^{4,1,\gamma}([0,T]\times \Sp^2_+)\ \bigg |\ 
    \begin{array}{c} 
     \forall t\geq 0:\int_{\Sp^2_+}\tilde w(t)d\mu_{\Sp^2}=0,\ \exists c_1\in C^{0,\frac\gamma4}([0,T],\R),\ c_0\in\R:\\
     \dot{\tilde w}+\Delta^2\tilde w=c_1,\ \Delta^2 \tilde w(0,\cdot)=c_0
     \end{array}
     \right\},
\end{align*}
as well as the following elliptic analogues:
\begin{align*}
    X_0&:=\left\{w_0\in C^{4,\gamma}( \Sp^2_+)\ \bigg|\ \frac{\partial w_0}{\partial\eta}=\frac{\partial\Delta w_0}{\partial\eta}=0\right\}\\
    Y_0&:=\left\{\tilde w_0\in C^{4,\gamma}(\Sp^2_+)\ \bigg|\ \int_{\Sp^2_+}\tilde w_0d\mu_{\Sp^2}=0,\ \exists c_0\in\R: \Delta^2 \tilde w_0=c_0\right\}
\end{align*}
\begin{lemma}[A direct decomposition of \text{$C^{4,1,\gamma}([0,T]\times \Sp^2_+)$  and $C^{4,\gamma}(\Sp^2_+)$}]\label{decomposition}\hfill
\begin{enumerate}
    \item $C^{4,\gamma}(\Sp^2_+)=X_0\oplus Y_0$. The linear projections $\pi_{X_0}:C^{4,\gamma}(\Sp^2_+)\rightarrow X_0$ and $\pi_{Y_0}:C^{4,\gamma}(\Sp^2_+)\rightarrow Y_0$ are continuous. 
    \item $C^{4,1,\gamma}([0,T]\times \Sp^2_+)=X_T\oplus Y_T$. The linear projections $\pi_{X_T}:C^{4,1,\gamma}([0,T]\times \Sp^2_+)\rightarrow X_T$ and $\pi_{Y_T}:C^{4,1,\gamma}([0,T]\times \Sp^2_+)\rightarrow Y_T$ are continuous. 
\end{enumerate}
\end{lemma}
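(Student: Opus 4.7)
For \textbf{Part 1}, I would first establish that $X_0\cap Y_0=\{0\}$ and then construct the decomposition by chaining two Neumann problems for $\Delta$. For the intersection: if $u\in X_0\cap Y_0$, integrating $\Delta^2 u=c_0$ over $\Sp^2_+$ and using the divergence theorem (with $\eta$ the inner conormal, so $\int\Delta v\,d\mu=-\int_{\partial\Sp^2_+}\partial_\eta v\,dS$) together with $\partial_\eta\Delta u=0$ forces $c_0=0$; then testing $\Delta^2 u=0$ against $u$ and integrating by parts twice (the boundary contributions vanish since $\partial_\eta u=\partial_\eta\Delta u=0$) yields $\int(\Delta u)^2=0$, so $\Delta u=0$; harmonic $u$ with vanishing normal derivative and zero mean must be $0$.

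For existence in Part 1, given $w\in C^{4,\gamma}(\Sp^2_+)$ with boundary data $g_1:=\partial_\eta w$ and $g_3:=\partial_\eta\Delta w$, set $c_0:=-\tfrac{1}{2\pi}\int_{\partial\Sp^2_+}g_3\,dS$ (forced by the compatibility of $\Delta v=c_0$, $\partial_\eta v=g_3$), solve this Neumann problem for $v$, adjust the free additive constant so the next compatibility condition $\int v=-\int_{\partial\Sp^2_+}g_1\,dS$ holds, solve $\Delta \tilde w=v$ with $\partial_\eta\tilde w=g_1$, and finally normalize the remaining constant by $\int\tilde w=0$. Then $w_Y:=\tilde w\in Y_0$ and $w_X:=w-\tilde w\in X_0$ by construction, and continuity of $\pi_{X_0}$ and $\pi_{Y_0}$ follows from standard elliptic Schauder estimates for the Neumann problem.

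For \textbf{Part 2}, I would lift this to a fourth-order parabolic initial-boundary value problem. Given $w\in C^{4,1,\gamma}([0,T]\times\Sp^2_+)$, define $c_1(t):=-\tfrac{1}{2\pi}\int_{\partial\Sp^2_+}\partial_\eta\Delta w(t,\cdot)\,dS\in C^{0,\gamma/4}([0,T])$ and the initial datum $\tilde w_0:=\pi_{Y_0}(w(0,\cdot))$ from Part 1. Then solve
\begin{align*}
\partial_t\tilde w+\Delta^2\tilde w &= c_1(t) &&\text{in } [0,T]\times\Sp^2_+,\\
\partial_\eta\tilde w=\partial_\eta w,\ \ \partial_\eta\Delta\tilde w &= \partial_\eta\Delta w &&\text{on } [0,T]\times\partial\Sp^2_+,\\
\tilde w(0,\cdot) &= \tilde w_0,
\end{align*}
via standard parabolic Schauder theory. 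The first-order compatibility conditions at the corner $\{t=0\}\times\partial\Sp^2_+$ hold automatically because the definition of $\pi_{Y_0}$ already matches both boundary data of $w(0,\cdot)$. Integrating the equation in space and using the choice of $c_1$ gives $\partial_t\int\tilde w=0$; combined with $\int\tilde w_0=0$ this yields $\int\tilde w(t)=0$ for all $t$, and $\Delta^2\tilde w(0,\cdot)$ is constant because $\tilde w_0\in Y_0$. Hence $\tilde w\in Y_T$ and automatically $w-\tilde w\in X_T$, with continuity of both projections provided by the Schauder estimate.

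Uniqueness of the decomposition reduces to $X_T\cap Y_T=\{0\}$: if $u$ lies in the intersection, Part 1 applied at $t=0$ gives $u(0,\cdot)\in X_0\cap Y_0=\{0\}$, and testing $\partial_t u+\Delta^2 u=c_1(t)$ against $u$ produces $\tfrac12\tfrac{d}{dt}\|u(t)\|_{L^2}^2=c_1(t)\int u-\int(\Delta u)^2=-\int(\Delta u)^2\leq 0$ (the first term vanishes because $\int u\equiv 0$), so $u\equiv 0$. The main technical hurdle is to verify that the pair of Neumann-type boundary operators $(\partial_\eta,\partial_\eta\Delta)$ is compatible with the fourth-order parabolic operator $\partial_t+\Delta^2$ in the sense required to invoke parabolic Schauder theory; this is classical and can be referenced to the paper's appendix on Schauder estimates.
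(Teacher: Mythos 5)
Your proof is correct and follows essentially the same strategy as the paper: establish $X\cap Y=\{0\}$, then produce the $Y$-component by solving a fourth-order (bi-)Laplace boundary value problem whose Neumann data match those of the given function, with the constant right-hand side dictated by the integral compatibility condition, and appeal to Schauder estimates for continuity of the projections. The only cosmetic differences are that in Part~1 you factor $\Delta^2$ into two chained second-order Neumann problems (the paper solves the fourth-order problem directly, citing ADN theory), you spell out the triviality of $X_0\cap Y_0$ via two integrations by parts rather than leaving it as ``easy to check,'' and in Part~2 you replace the appeal to parabolic uniqueness with an explicit $L^2$-energy monotonicity argument — all of which are sound, slightly more self-contained renderings of the same steps.
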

\begin{proof}
We first prove the time-independent case. It is easy to check that $X_0\cap Y_0=\set 0$. Now let $u_0\in C^{4,\gamma}(\Sp^2_+)$. We consider the following problem:
\begin{equation}\label{tildew0prob}
\left\{\begin{array}{rcll}
\Delta^2\tilde w_0&=&-\avint_{\partial \Sp^2_+}\frac{\partial\Delta u_0}{\partial\eta}dS&\text{in $\Sp^2_+$},\\
\frac\partial{\partial\eta}\tilde w_0&=&\frac\partial{\partial\eta}u_0&\text{on $\partial \Sp^2_+$},\\
\frac\partial{\partial\eta}\Delta \tilde w_0&=&\frac\partial{\partial\eta}\Delta u_0&\text{on $\partial \Sp^2_+$},\\
\int_{\Sp^2_+}\tilde w_0d\mu_{\Sp^2}&=&0.
\end{array}\right.
\end{equation}
This problem has a unique solution $\tilde w_0\in C^{4,\gamma}(\Sp^2_+)$ (see e.g. \cite{adn1}). We set $w_0:=u_0-\tilde w_0$ and claim that this gives the required decomposition. Considering (\ref{tildew0prob}) it is clear that $\tilde w_0\in Y_0$ and it is readily shown that $w_0\in X_0$. All that is left to do is establishing the continuity of the projections. Schauder theory (e.g. \cite{adn1}) implies
\begin{equation}\label{preoj1}
    \|\tilde w_0\|_{C^{4,\gamma}}\leq C\|u_0\|_{C^{4,\gamma}}.
\end{equation}
Using the definition of $w_0$ and (\ref{preoj1}) the estimate $\|w_0\|_{C^{4,\gamma}}\leq C(\Sp^2_+,\gamma)\|u_0\|_{C^{4,\gamma}}$ follows.

Next, we show the time-dependent case. First suppose that $u\in X_T\cap Y_T$. It is easy to see that such $u$ solves
$$
\left\{\begin{array}{rcll}
\dot u+\Delta^2u&=&0&\text{in $[0,T]\times \Sp^2_+$},\\
\frac\partial{\partial\eta}u&=&0&\text{on $ [0,T]\times\partial  \Sp^2_+$},\\
\frac\partial{\partial\eta}\Delta u&=&0&\text{on $ [0,T]\times\partial  \Sp^2_+$},\\
u(0,\cdot)&=&0&\text{on }\set0\times \Sp^2_+
\end{array}\right.
$$
and is therefore $0$ by uniqueness of this problem. For $u\in C^{4,1,\gamma}([0,T]\times \Sp^2_+)$ set $u_0:=u(0,\cdot)\in C^{4,\gamma}(\Sp^2_+)$. By the time-independent case there exist unique $w_0\in X_0$ and $\tilde w_0\in Y_0$ such that $u_0=w_0+\tilde w_0$. Now consider the problem

$$
\left\{\begin{array}{rcll}
\dot{\tilde  w}+\Delta^2\tilde w&=&-\avint_{\partial \Sp^2_+}\frac{\partial\Delta u}{\partial\eta}dS&\text{in $[0,T]\times \Sp^2_+$},\\
\frac\partial{\partial\eta}\tilde w&=&\frac\partial{\partial\eta}u&\text{on $ [0,T]\times\partial  \Sp^2_+$},\\
\frac\partial{\partial\eta}\Delta \tilde w&=&\frac\partial{\partial\eta}\Delta u&\text{on $ [0,T]\times\partial  \Sp^2_+$},\\
\tilde w(0,\cdot)&=&\tilde w_0&\text{on }\set0\times \Sp^2_+.
\end{array}\right.
$$
This problem has a unique solution $\tilde w\in C^{4,1,\gamma}([0,T]\times \Sp^2_+)$ (see e.g. \cite{simon, eidelman}) as the necessary compatibility conditions are satisfied. Let $w:=u-\tilde w$. We claim that this provides the claimed decomposition. Fist we use parabolic Schauder theory to get
\begin{equation}\label{proj3}
\|\tilde w\|_{C^{4,1,\gamma}_T}\leq C(T)(\|u\|_{C^{4,1,\gamma}_T}+\|\tilde w_0\|_{C^{4,\gamma}})\overset{(\ref{preoj1})}\leq C(T)\|u\|_{C^{4,1,\gamma}_T}.
\end{equation}
Note that $\tilde w$ also satisfies
$$\int_{\Sp^2_+}\tilde w(0)d\mu_{\Sp^2}=0\hspace{.5cm}\text{and}\hspace{.5cm}\frac{d}{dt}\int_{\Sp^2_+} \tilde w(t)d\mu_{\Sp^2}=-2\pi \avint_{\partial \Sp^2_+}\frac{\partial\Delta u}{\partial\eta}dS-\int_{\Sp^2_+}\Delta^2 ud\mu_{\Sp^2}=0.$$
So $\tilde w\in Y_T$. It is easy to see that $w\in X_T$. Finally, we combine the definition of $w$ and Equation (\ref{proj3}) to get $\|w\|_{C^{4,1,\gamma}_T}\leq C(T)\|u\|_{C^{4,1,\gamma}_T}$. 
\end{proof}
For $l\geq 0$ and $T>1$ we introduce
\begin{align*} 
G_0^l&:= C^l(Z_2, M_3^ {\operatorname{sym}}(\R))\hspace{.5cm}&\text{and}&\hspace{.5cm}\|\cdot\|_{G_0^l}:=\|\cdot\|_{ C^l(Z_2, M_3^ {\operatorname{sym}}(\R))}\\
G_T^l&:= C^{1,\frac\gamma4}([0,T], C^l(Z_2, M_3^ {\operatorname{sym}}(\R)))\hspace{.5cm}&\text{and}&\hspace{.5cm}\|\cdot\|_{G_T^l}:=\|\cdot\|_{C^{1,\frac\gamma4}([0,T], C^l(Z_2, M_3^ {\operatorname{sym}}(\R)))}.
\end{align*}

Let $l\geq 7$. We consider the operator 
\begin{align*}
&B_0:C^{4,\gamma}(\Sp^2_+)\times G_0^l\rightarrow C^{3,\gamma}(\partial \Sp^2_+)\times C^{1,\gamma}(\partial \Sp^2_+)\\
&(u_0,\tilde g_0)\rightarrow \bigg(\tilde g_0(\tilde\nu,\tilde\nu_{\R^2}), \frac{\partial H}{\partial\tilde\eta}+H\tilde h^{\R^2}(\tilde\nu,\tilde\nu)\bigg).    
\end{align*}
Here $\tilde\nu$ and $\tilde\eta$ are defined with respect to the metric $\tilde g_0$. $B_0$ is well defined on the set $\|u_0\|_{C^{4,\gamma}}+\|\tilde g_0-\delta\|_{G_0^l}<\epsilon$ and is of class $C^{l-7}$. 
Next we define the analogue time-dependent operator. For $T>1$ let
\begin{align*}
    &B_T:C^{4,1,\gamma}([0,T]\times \Sp^2_+)\times G_T^l\rightarrow C^{3,0,\gamma}([0,T]\times \partial \Sp^2_+)\times C^{1,0,\gamma}([0,T]\times \partial \Sp^2_+)\\
&B_T[u,\tilde g]:=\bigg(\tilde g(\tilde\nu,\tilde\nu_{\R^2}), \frac{\partial H}{\partial\tilde\eta}+H\tilde h^{\R^2}(\tilde\nu,\tilde\nu)\bigg).
\end{align*}
Now  $\tilde\nu$ and $\tilde\eta$ are defined with respect to the metric $\tilde g$. $B_T$ is well defined on the set $\|u\|_{C^{4,1,\gamma}_T}+\|\tilde g-\delta\|_{G_T^l}<\epsilon$ and is of class $C^{l-7}$. 

\begin{lemma}[Boundary value Lemma]\label{boundaryvalueslem}
Let $l\geq 8$. 
\begin{enumerate}
    \item There exist $\sigma_0,\theta_0,\tilde\theta_0>0$ and a $C^{l-7}$-map $\psi_0:X_0(\theta_0)\times G_0^l(\delta;\sigma_0)\rightarrow Y_0(\tilde\theta_0)$ such that within the respective neighbourhoods
    $$B_0[w_0+\tilde w_0,\tilde g_0]=0\hspace{.2cm}\Leftrightarrow\hspace{.2cm}\tilde w_0=\psi_0[w_0,\tilde g_0].$$
    The map $\psi_0$ satisfies $D_1\psi_0[0,\delta]=0$.
    \item There exist $\sigma(T),\theta(T),\tilde\theta(T)>0$ and a $C^{l-7}$-map $\psi_T:X_T(\theta(T))\times G_T^l(\delta;\sigma(T))\rightarrow Y_T(\tilde\theta(T))$ such that within the respective neighbourhoods
    $$B_T[w+\tilde w,\tilde g]=0\hspace{.2cm}\Leftrightarrow\hspace{.2cm}\tilde w=\psi_T[w,\tilde g].$$
    The map $\psi_T$ satisfies $D_1\psi_T[0,\delta]=0$.
\end{enumerate}
\end{lemma}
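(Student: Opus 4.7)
The plan is to apply the implicit function theorem to the equation $B_0[w_0 + \tilde w_0, \tilde g_0] = 0$ (and its time-dependent analogue) at the reference point $(w_0, \tilde w_0, \tilde g_0) = (0, 0, \delta)$, solving for $\tilde w_0 \in Y_0$ in terms of $(w_0, \tilde g_0) \in X_0 \times G_0^l$. At this base point the round half-sphere $f_0 : \Sp^2_+ \hookrightarrow \R^3$ meets $\R^2 \times \{0\}$ orthogonally and has constant mean curvature $H = 2$ with $\tilde h^{\R^2} = 0$, so $B_0[0, \delta] = 0$ is immediate, and the regularity of $B_0$ is given by hypothesis.

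The main step is to show that the partial derivative of $B_0$ in its first argument, restricted to $Y_0$, namely
\[
D_1 B_0[0, \delta]\big|_{Y_0} : Y_0 \longrightarrow C^{3,\gamma}(\partial \Sp^2_+) \times C^{1,\gamma}(\partial \Sp^2_+),
\]
is a linear isomorphism. A standard computation of the linearization of the unit normal and of the mean curvature for small normal graphs over the round half-sphere gives, up to signs,
\[
D_1 B_0[0, \delta]\, v \;=\; \Bigl(\tfrac{\partial v}{\partial \eta},\; -\tfrac{\partial}{\partial \eta}(\Delta + 2)\,v\Bigr),
\]
because $\tilde h^{\R^2} \equiv 0$ at the flat background metric and the Jacobi operator of the unit sphere is $\Delta + 2$. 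Bijectivity on $Y_0$ therefore reduces to solving the elliptic boundary value problem
\[
\Delta^2 v = c_0 \text{ in } \Sp^2_+,\qquad \tfrac{\partial v}{\partial \eta} = \phi_1,\quad \tfrac{\partial \Delta v}{\partial \eta} = \phi_2 \text{ on } \partial \Sp^2_+,\qquad \int_{\Sp^2_+} v\, d\mu_{\Sp^2} = 0,
\]
with $c_0$ forced by the divergence-theorem compatibility $2\pi c_0 = \pm \int_{\partial \Sp^2_+} \phi_2\, dS$. This is an elliptic problem in the ADN sense satisfying the Lopatinski--Shapiro complementing condition, so the Schauder theory already invoked in the proof of Lemma \ref{decomposition} produces a unique solution together with the bound $\|v\|_{C^{4,\gamma}} \lesssim \|\phi_1\|_{C^{3,\gamma}} + \|\phi_2\|_{C^{1,\gamma}}$. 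The implicit function theorem then supplies $\psi_0$ of class $C^{l-7}$.

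For the parabolic case the same pointwise linearization applies and the corresponding problem on $Y_T$ reads
\[
\dot w + \Delta^2 w = c_1(t),\qquad \tfrac{\partial w}{\partial \eta} = \phi_1,\quad \tfrac{\partial \Delta w}{\partial \eta} = \phi_2,\qquad w(0, \cdot) = v_0,
\]
with $c_1(t) = \pm(2\pi)^{-1} \int_{\partial \Sp^2_+} \phi_2(t,\cdot)\, dS$ determined by differentiating the constraint $\int_{\Sp^2_+} w(t)\, d\mu_{\Sp^2} = 0$ in time, and with initial datum $v_0 \in Y_0$ obtained from the elliptic case applied to $(\phi_1(0,\cdot), \phi_2(0,\cdot))$. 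This choice of $v_0$ simultaneously enforces the parabolic compatibility conditions at $t = 0$ required for $C^{4,1,\gamma}$ regularity and the requirement that $\Delta^2 w(0,\cdot)$ be spatially constant, so the parabolic Schauder theory cited in the proof of Lemma \ref{decomposition} yields a unique $w \in C^{4,1,\gamma}$, and a direct check using the chosen $c_1$ shows that $w \in Y_T$. This gives the required isomorphism and hence $\psi_T$ by the IFT.

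The vanishing $D_1 \psi_0[0, \delta] = 0$ (and similarly $D_1 \psi_T[0, \delta] = 0$) follows at once from the IFT formula $D_1 \psi = -(D_{\tilde w} B)^{-1} \circ D_w B$, since the linearization of both boundary conditions at $(0, \delta)$, evaluated on any $w \in X_0$ (resp.\ $X_T$), vanishes identically by the defining conditions $\partial w/\partial \eta = \partial \Delta w/\partial \eta = 0$. I expect the principal obstacle to be tracking the parabolic compatibility conditions at $t = 0$ for the fourth-order IBVP, since the initial datum $v_0$ has to be supplied by the elliptic step at precisely the regularity needed; the remaining arguments are routine applications of ADN/Schauder theory together with the IFT.
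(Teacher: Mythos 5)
Your proposal is correct and follows essentially the same route as the paper: you apply the implicit function theorem to the boundary operator viewed on the splitting $X_0 \oplus Y_0$ (resp.\ $X_T \oplus Y_T$), establish bijectivity of the linearization on $Y_0$ (resp.\ $Y_T$) by solving the same fourth-order elliptic (resp.\ parabolic) boundary value problem via ADN/Schauder theory with the compatibility constant $c_0$ (resp.\ $c_1(t)$) fixed by the divergence theorem, and deduce $D_1\psi_0[0,\delta]=0$ from the fact that $D_1B_0[0,\delta]$ annihilates $X_0$. The only cosmetic difference is that you invoke the IFT formula $D_1\psi=-(D_{\tilde w}B)^{-1}\circ D_w B$ to conclude the vanishing of $D_1\psi_0[0,\delta]$, whereas the paper differentiates $B_0[tw_0+\psi_0[tw_0,\delta],\delta]=0$ in $t$ and then uses $X_0\cap Y_0=\{0\}$; these are two phrasings of the same observation.
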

\begin{proof}
We first prove the time-independent case. $B_0$ is of class $C^{l-7}$ and in Lemma \ref{boundarylin} we show 
$$D_1B_0[0,\delta]\varphi=\left(\frac{\partial\varphi}{\partial\eta},-\frac{\partial}{\partial\eta}(\Delta+2)\varphi\right).$$
Note that $X_0=\operatorname{ker}(D_1B_0[0,\delta])$. On a neighbourhood of $(0,0,\delta)\in X_0\times Y_0\times G_0^ l$ we define $\bar B_0:X_0\times Y_0\times G^l_0\rightarrow  C^{3,\gamma}(\partial \Sp^2_+)\times C^{1,\gamma}(\partial \Sp^2_+)$ by mapping $\bar B_0[w_0,\tilde w_0,\tilde g_0]:=B_0[w_0+\tilde w_0,\tilde g_0]$. By construction $D_2\bar B_0[0,0,\delta]$ is injective. It is also onto as for any $(\alpha_0,\beta_0)\in  C^{3,\gamma}(\partial \Sp^2_+)\times C^{1,\gamma}(\partial \Sp^2_+)$ we can obtain $\tilde w_0\in Y_0$ satisfying $D_2\bar B_0[0,0,\delta]=(\alpha_0,\beta_0)$ by solving
$$\left\{\begin{array}{rcll}
\Delta^2 \varphi_0&=&\avint_{\partial \Sp^2_+}\beta_0+2\alpha_0&\text{in }\Sp^2_+,\\
\frac{\partial \varphi_0}{\partial\eta}&=&\alpha_0 &\text{on }\partial \Sp^2_+,\\
\frac{\partial \Delta \varphi_0}{\partial\eta}&=&-\beta_0-2\alpha_0 &\text{on }\partial \Sp^2_+,\\
\int \varphi_0d\mu_{\Sp^2}&=&0.& 
\end{array}\right.$$
This problem has a unique solution $\varphi_0\in C^{4,\gamma}(\Sp^2_+)$ (see e.g. \cite{adn1}) which clearly lies in $Y_0$. The Schauder estimate 
\begin{equation}\label{boundary1}
\|\varphi_0\|_{C^{4,\gamma}}\leq C(\|\alpha_0\|_{C^{3,\gamma}}+\|\beta_0\|_{C^{1,\gamma}})
\end{equation}
guarantees $D_2 \bar B_0[0,0,\delta]$ to have a bounded inverse. The implicit function theorem yields the neighbourhoods as well as the $C^{l-7}$ diffeomorphism. Finally, we prove the formula. For that let $w_0\in X_0$ and compute 
$$0=\frac d{dt} B_0[tw_0+ \psi_0[tw_0,\delta],\delta]=D_1 B_0[0,\delta]w_0+D_1B_0[0,\delta]D_1\psi_0[0,\delta]w_0.$$
Note $w_0\in X_0=\operatorname{ker}D_1 B_0[0,\delta]$. Hence the first term vanishes and $D_1\psi_0[0,\delta]w_0 \in \operatorname{ker}D_1 B_0[0,\delta]=X_0$. So $D_1\psi_0[0,\delta]w_0\in X_0\cap Y_0=\set0$ for all $w_0\in X_0$ and therefore $D_1\psi_0[0,\delta]=0$.

\ \\
For the time-dependent case we essentially repeat the same argument. First, on a neighbourhood of $(0,0,\delta)\in X_T\times Y_T\times G_T^ l$, we define 
$\bar B_T:X_T\times Y_T\times G_T^l\rightarrow C^{3,0,\gamma}([0,T]\times \Sp^2_+)\times C^{1,0,\gamma}([0,T]\times \Sp^2_+)$
by mapping $\bar B_T[w,\tilde w,\tilde g]:=B_T[w+\tilde w,\tilde g]$ and compute
$$D_2\bar  B_T[0,0,\delta]\varphi=\left(\frac{\partial\varphi}{\partial\eta},-\frac{\partial(\Delta+2)\varphi}{\partial\eta}\right).$$
$D_2\bar  B_0[0,0,\delta]$ is injective as any element $\varphi\in\ker D_2\bar  B_T[0,0,\delta] $ is in $X_T\cap Y_T$ and thus $0$. Next $D_2\bar  B_T[0,0,\delta]$ is onto. For $(\alpha,\beta)\in C^{3,0,\gamma}([0,T]\times \partial \Sp^2_+)\times C^{1,0,\gamma}([0,T]\times \partial \Sp^2_+)$ let $\alpha_0:=\alpha(0,\cdot)$ and $\beta_0:=\beta(0,\cdot)$. Then we have seen in the time-independent case that there exists a unique $\varphi_0\in Y_0$ such that $D_2\bar B_0[0,0,\delta]\varphi_0=(\alpha_0,\beta_0)$. Now consider the problem
$$\left\{\begin{array}{rcll}
\dot \varphi+\Delta^2 \varphi&=&\avint_{\partial \Sp^2_+}\beta+2\alpha&\text{in }[0,T]\times \Sp^2_+,\\
\frac{\partial \varphi}{\partial\eta}&=&\alpha &\text{on }[0,T]\times\partial \Sp^2_+,\\
\frac{\partial \Delta \varphi}{\partial\eta}&=&-\beta-2\alpha &\text{on }[0,T]\times\partial \Sp^2_+,\\\
\varphi(\cdot,0)&=&\varphi_0&\text{on }\set0\times \Sp^2_+.
\end{array}\right.$$
By construction of $\varphi_0$ the necessary compatibility conditions are satisfied and we deduce that there exists a unique function $\varphi\in C^{4,1,\gamma}([0,T]\times \Sp^2_+)$ that is a solution of this problem (see e.g. \cite{simon,eidelman}). Proving that this solution lies in $Y_T$ is done as in the proof of Lemma \ref{decomposition}. Due to Schauder estimates and Estimate (\ref{boundary1}) we get
$$\|\varphi\|_{C^{4,1,\gamma}_T}\leq C(T)\left(\|\varphi_0\|_{C^{4,\gamma}}+\|\alpha\|_{C^{3,0,\gamma}_T}+\|\beta\|_{C^{1,0,\gamma}_T}\right)\overset{(\ref{boundary1}) }\leq C(T)\left(\|\alpha\|_{C^{3,0,\gamma}_T}+\|\beta\|_{C^{1,0,\gamma}_T}\right)$$
and hence learn that the inverse map is continuous. 
The implicit function theorem guarantees the existence of the neighbourhoods and of the function $\psi_T$. The formula follows as in the time-independent case.
\end{proof}
\noindent
\textbf{Remarks}\hfill
\begin{enumerate}
    \item It is easy to check that $\psi_T[w,\tilde g](0)=\psi_0[w(0),\tilde g(0)]$.

\item $\psi_0,\psi_T\in C^2$ for $l\geq 9$. Using $D_1\psi_0[0,\delta]=0$ and $D_1\psi_T[0,\delta]=0$ it is then readily shown that after potentially shrinking the neighbourhoods in Lemma \ref{boundaryvalueslem} the following hold: Given $w_0\in X_0(\theta_0)$ and $\tilde g_0\in G_0^ l(\sigma_0)$ or $w\in X_T(\theta(T))$ and $\tilde g\in G_T^ l(\sigma(T))$ we have
\begin{align*}
    \|\psi_0[w_0,\tilde g_0]\|_{C^{4,\gamma}}&\leq C(\|\tilde g_0-\delta\|_{G_0^ l}+\|w_0\|_{C^{4,\gamma}}^2)\\
    \|\psi_T[w,\tilde g]\|_{C^{4,1,\gamma}_T}&\leq C(T)\left(\|\tilde g-\delta\|_{G_T^ l}+\|w\|_{C^{4,1,\gamma}_T}^2\right)
\end{align*}
\end{enumerate}

We now wish to study a prototype of the flow equations (\ref{system}). For $\tilde g\in G_T^ l$ and $u\in C^{4,1,\gamma}([0,T]\times \Sp^2_+)$ we put 
\begin{equation}\label{taudef}
\tau[u,\tilde g ]:=-\sum_{\mu,\nu=0}^2A^ {\mu\nu}[u,\tilde g ]\frac{D_2 C^\mu[u,\tilde g ]\dot{\tilde g}}{\|\nabla C^\mu[u,\tilde g ]\|_{L^2(f_u^*\tilde g )}}\frac{\nabla C^\nu[u,\tilde g ]}{\|\nabla C^\nu[u,\tilde g ]\|_{L^2(f_u^*\tilde g )}}
\end{equation}
and define the operator
\begin{equation}\label{Qoperatordefinition}
\begin{aligned} 
&Q_T:C^{4,1,\gamma}([0,T]\times \Sp^2_+)\times C^{4,\gamma}(\Sp^2_+) \times G_T^l\times C^{0,0,\gamma}([0,T]\times \Sp^2_+)\rightarrow C^{0,0,\gamma}([0,T]\times \Sp^2_+)\\
&Q_T[u,u_0,\tilde g,f]:=\tilde g(\dot f_u,\tilde \nu[u,\tilde g])-\tau[u,\tilde g]+P_{K[u,\tilde g]}^\perp(W[u,\tilde g]-f).
\end{aligned}
\end{equation}
For $l\geq 8$ the operator $Q_T$ is well defined on a small neighbourhood of $(0,0,\delta,0)$ and is of class $C^{l-8}$. We also introduce the operator $\bar Q_T:X_T\times X_0\times G_T^ l\times C^{0,0,\gamma}([0,T]\times \Sp^2_+)\rightarrow C^{0,0,\gamma}([0,T]\times \Sp^2_+)\times X_0$ that maps 
$$\bar Q_T[w,w_0,\tilde g, f]:=\left(Q_T[w+\psi_T[w,\tilde g], w_0+\psi_0[w_0,\tilde g(0)], \tilde g,f], w(0,\cdot)-w_0\right).$$
By the standard rules for composition of operators we see that $\bar Q_T$ is also of class $C^{l-8}$ for $l\geq 8$.

\begin{theorem}\label{firstestimate}
Let $T>1$ and suppose $l\geq 9$. There exists $\theta(T)>0$, $\theta'(T)>0$, $\sigma(T)>0$ and $\delta(T)>0$ and a $C^{l-8}$-map $w:X_0(\theta(T))\times G_T^l(\delta;\sigma(T))\times C^{0,0,\gamma}_T(\delta(T))\rightarrow X_T(\theta'(T))$ such that within the respective neighbourhoods
$$\bar Q_T[w,w_0,\tilde g, f]=(0,0)\hspace{.2cm}\Leftrightarrow\hspace{.2cm} w=w[w_0,\tilde g, f].$$
If additionally $l\geq 10$, $A[w_0+\psi_0[w_0,\tilde g(0)],\tilde g(0)]=2\pi$ and $C[w_0+\psi_0[w_0,\tilde g(0)],\tilde g(0)]=0$ the following estimates hold:
\begin{align*} 
&\|w\|_{C^{4,1,\gamma}_T}\leq C(T)\bigg[\|w_0\|_{C^{4,\gamma}}+\|\tilde g-\delta\|_{G_T^l}+\|P_{K[0,\delta]}^\perp f\|_{C^{0,0,\gamma}_T}+\|f\|_{C^{0,0,\gamma}_T}^2\bigg],\\
&\|w(T)\|_{C^{4,\gamma}}\leq Ce^{-6 T}\|w_0\|_{C^{4,\gamma}}+C(T)\bigg[\|\tilde g-\delta\|_{G_T^l}+\|P_{K[0,\delta]}^\perp f\|_{C^{0,0,\gamma}_T}+\|f\|_{C^{0,0,\gamma}_T}^2\bigg].
\end{align*}
\end{theorem}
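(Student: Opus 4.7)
The plan is to apply the implicit function theorem to $\bar Q_T$ at the base point $(w,w_0,\tilde g,f) = (0,0,\delta,0)$. I first verify $\bar Q_T[0,0,\delta,0] = (0,0)$: the round half-sphere $f_0$ is totally umbilic in Euclidean space with $h^0=0$, $H=2$ and $\mathrm{Ric}_\delta\equiv 0$, so $W[0,\delta]=0$; the reference metric $\delta$ is $t$-independent so $\dot\delta=0$ forces $\tau[0,\delta]=0$; and $w(0,\cdot)-w_0$ vanishes trivially. Since $Q_T$ is $C^{l-8}$ and $\psi_0,\psi_T$ are $C^{l-7}$ by Lemma~\ref{boundaryvalueslem}, the composition $\bar Q_T$ is of class $C^{l-8}$ on a neighbourhood of the base point.

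The main step is to check that $D_{(w,w_0)}\bar Q_T[0,0,\delta,0]\colon X_T\times X_0\to C^{0,0,\gamma}([0,T]\times \Sp^2_+)\times X_0$ is a Banach space isomorphism. Using $D_1\psi_T[0,\delta]=D_1\psi_0[0,\delta]=0$ and the fact that $Q_T$ does not depend on its second argument, a direct linearisation identifies this derivative with
$$
(v,v_0)\longmapsto \bigl(-\dot v - \tfrac12\Delta(\Delta+2) v,\; v(0,\cdot) - v_0\bigr),
$$
where $\tfrac12\Delta(\Delta+2)$ is the linearisation of $W[\cdot,\delta]$ at $u=0$. The kernel of $\Delta(\Delta+2)$ on $X_0$ consists of the spherical harmonics of degree $\leq 1$ satisfying the Neumann conditions and coincides with $K[0,\delta]=\operatorname{span}_\R\{1,\omega^1,\omega^2\}$; together with the self-adjointness of $\Delta(\Delta+2)$ under the $X_T$-boundary conditions this shows $P^\perp_{K[0,\delta]}\circ \Delta(\Delta+2)=\Delta(\Delta+2)$ on $X_T$. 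Given $(g,v_0)\in C^{0,0,\gamma}\times X_0$, parabolic Schauder theory as in \cite{simon,eidelman} furnishes a unique $v\in X_T$ solving $\dot v + \tfrac12\Delta(\Delta+2)v=-g$ with $v(0,\cdot)=v_0$ (all compatibility conditions at $t=0$ are automatic for $v_0\in X_0$). The implicit function theorem then delivers the $C^{l-8}$ map $w[w_0,\tilde g,f]$ on the stated neighbourhoods.

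For the quantitative estimates, I Taylor-expand the defining identity $\bar Q_T[w,w_0,\tilde g,f]=(0,0)$ to bring it into the form
$$
\dot w + \tfrac12\Delta(\Delta+2)w \;=\; P^\perp_{K[0,\delta]} f \;+\; R[w,w_0,\tilde g,f],
$$
where $R$ depends linearly on $\tilde g - \delta$ and at least quadratically on $(w,f)$. The compatibility hypotheses $A[\cdot,\tilde g(0)]=2\pi$ and $C[\cdot,\tilde g(0)]=0$ ensure that $K[u,\tilde g]$ agrees with $K[0,\delta]$ up to a higher-order correction, so the difference $P^\perp_{K[u,\tilde g]}f-P^\perp_{K[0,\delta]}f$ can be absorbed into $R$; this explains why only $P^\perp_{K[0,\delta]}f$ appears in the linear part of the estimates, and why the remaining contribution of $f$ enters only at the quadratic level. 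The first estimate follows from a parabolic Schauder bound applied to the displayed equation, with the nonlinear remainder absorbed via smallness of the data. For the second estimate I split $w=w_{\mathrm{hom}}+w_{\mathrm{inhom}}$ according to the semigroup generated by $-\tfrac12\Delta(\Delta+2)$ on $X_0$; the barycentre hypothesis $C[w_0+\psi_0[w_0,\tilde g(0)],\tilde g(0)]=0$ places $w_0$ in $K[0,\delta]^\perp$ up to higher-order error, so $w_{\mathrm{hom}}(T)$ decays like $e^{-\mu T}\|w_0\|_{C^{4,\gamma}}$ with $\mu$ the smallest nonzero eigenvalue of $\tfrac12\Delta(\Delta+2)$ on $X_0\cap K[0,\delta]^\perp$. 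As $\tfrac12 l(l+1)(l-1)(l+2)\geq 12$ for $l\geq 2$, this is more than enough for the claimed factor $e^{-6T}$; the inhomogeneous part is controlled as in the first estimate.

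The main obstacle, as I see it, will be the bookkeeping in the last paragraph: making precise how the admissibility conditions $A=2\pi$, $C=0$ shift $P^\perp_{K[u,\tilde g]}$ to $P^\perp_{K[0,\delta]}$ at the cost of only a quadratic remainder, and how to combine the time-dependent projection with the parabolic semigroup estimate while retaining the explicit exponential factor. Everything else reduces to standard parabolic Schauder theory combined with the Lyapunov–Schmitt-style splitting $C^{4,1,\gamma}=X_T\oplus Y_T$ provided by Lemma~\ref{decomposition}.
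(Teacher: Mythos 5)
Your proof follows the paper's strategy closely: the implicit function theorem at $(0,0,\delta,0)$ with the same biharmonic-heat linearisation $-\bigl(\dot\varphi+\tfrac12\Delta(\Delta+2)\varphi\bigr)$, a $C^2$ Taylor expansion producing a parabolic equation with quadratic remainder, use of the admissibility conditions $A=2\pi$, $C=0$ to bound the $K[0,\delta]$-component of $w_0$, and an improved Schauder estimate giving exponential decay on the orthogonal complement. The only organisational difference is your Duhamel/semigroup split $w = w_{\mathrm{hom}} + w_{\mathrm{inhom}}$ versus the paper's $L^2$-orthogonal split $w=w^1+w^2$ with $w^1(t)\in\operatorname{span}_\R\{1,\omega^1,\omega^2\}$ for all $t$; both work, and both ultimately require the same two ingredients (control of the kernel projection of $w_0$, plus coercivity of $\tfrac12\Delta(\Delta+2)$ on $K[0,\delta]^\perp$).

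One slip in the reasoning is worth flagging: you attribute the replacement $P^\perp_{K[u,\tilde g]}\mapsto P^\perp_{K[0,\delta]}$ acting on $f$ to the admissibility hypotheses $A=2\pi$, $C=0$. That step is in fact automatic from $C^1$-regularity of $(u,\tilde g)\mapsto P^\perp_{K[u,\tilde g]}$ near $(0,\delta)$: the difference $\bigl(P^\perp_{K[u,\tilde g]}-P^\perp_{K[0,\delta]}\bigr)f$ is $O\bigl((\|u\|+\|\tilde g-\delta\|)\|f\|\bigr)$ and hence quadratic regardless of any constraints on $A$ or $C$. The admissibility hypotheses are used only where you invoke them second, namely to guarantee $\int_{\Sp^2_+}w_0\,d\mu_{\Sp^2}$ and $\int_{\Sp^2_+}w_0\,\omega^i\,d\mu_{\Sp^2}$ are of quadratic order (cf.\ Estimate (\ref{woestimate}) in the paper), which is precisely what makes the kernel part of $w_0$ harmless in the decay estimate. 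The conclusion you draw is correct, but the cause is mislabelled.
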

\begin{proof}
For $l\geq 9$ both $Q_T$ and $\bar Q_T$ are of class $C^{l-8}\subset C^1$. Using $D_1\psi_T[0,\delta]=0$, $D_1\psi_0[0,\delta]=0$ and
\begin{equation}\label{Qderivative}
D_1 Q_T[0,0,\delta,0]\varphi=-\left(\dot\varphi+\frac12\Delta(\Delta+2)\varphi\right)
\end{equation}
we learn that $D_1\bar Q_T[0,0,\delta,0]$ is an isomorphism with bounded inverse. Indeed  $\bar Q_T[0,0,\delta,0]\varphi=(\psi,\varphi_0)$ for $\varphi\in X_T$ if and only if 
$$\left\{\begin{aligned}
\dot\varphi+\frac12\Delta(\Delta+2)\varphi&=-\psi\hspace{.2cm}\text{on $[0,T]\times \Sp^2_+$}\\
\frac{\partial\varphi}{\partial\eta}&=0\hspace{.2cm}\text{on $[0,T]\times \partial \Sp^2_+$}\\
\frac{\partial\Delta\varphi}{\partial\eta}&=0\hspace{.2cm}\text{on $[0,T]\times \partial \Sp^2_+$}\\
\varphi(0,\cdot)&=\varphi_0\hspace{.2cm}\text{on $\set 0\times \Sp^2_+$}.
\end{aligned}\right.$$
Schauder Theory (see e.g. \cite{simon,eidelman}) implies the existence and uniqueness of a solution $\varphi\in C^{4,1,\gamma}([0,T]\times \Sp^2_+)$ that must then be an element in $X_T$. Additionally the Schauder estimate 
$$\|\varphi\|_{C^{4,1,\gamma}_T}\leq C(T)\left(\|\varphi_0\|_{C^{4,\gamma}}+\|\psi\|_{C^{0,0,\gamma}_T}\right)$$
implies that $D_1\bar Q_T[0,0,\delta,0]$ is an isomorphism with bounded inverse. Hence the first part of the theorem follows. To prove the estimates we first note that after potentially shrinking all neighbourhoods in the Theorem the implicit function theorem implies the estimate 
\begin{equation}\label{wnaiveestimate}
    \|w\|_{C^{4,1,\gamma}_T}\leq C(T,\gamma)(\|\tilde g-\delta\|_{G_T^l}+\|f\|_{C^{0,0,\gamma}_T}+\|w_0\|_{C^{4,\gamma}}).
\end{equation}
Expanding $\bar Q_T[w,w_0,\tilde g, f]$ around $(0,0,\delta,0)$ yields the equation 
\begin{equation}\label{wproblem}
\left\{
\begin{aligned} 
&\dot w+\frac 12\Delta(\Delta+2) w=\sum_{\mu=0}^2\frac{D_2 C^\mu[0,\delta]\dot{\tilde g}}{\|\nabla C^\mu[0,\delta]\|_{L^2(\Sp^2_+)}^2}\nabla C^\mu[0,\delta]\\
&\hspace{4cm}+P_{K[0,\delta]}^\perp \left[D_2 W[0,\delta](\tilde g-\delta)-f\right]+R,\\
&\frac{\partial w}{\partial\eta}=\frac{\partial\Delta w}{\partial\eta}=0,\\
&w(0)=w_0.
\end{aligned}
\right.
\end{equation}
where the remainder $R$ satisfies the estimate $\|R\|_{C^{0,0,\gamma}_T}\leq C(T)\left(\|w\|_{C^{4,1,\gamma}_T}^2+\|\tilde g-\delta\|_{G_T^l}^2+\|f\|_{C^{0,0,\gamma}_T}^2\right)$ as due to $l\geq 10$ we know $\bar Q_T\in C^2$. Inserting Estimate (\ref{wnaiveestimate}) for $\|w\|_{C^{4,1,\gamma}_T}$ yields $\|R\|_{C^{0,0,\gamma}_T}\leq C(T)\left(\|w_0\|_{C^{4,1,\gamma}_T}^2+\|\tilde g-\delta\|_{G_T^l}^2+\|f\|_{C^{0,0,\gamma}_T}^2\right)$. Applying the Schauder Theory from e.g. \cite{simon,eidelman} to problem (\ref{wproblem}) and inserting this estimate for $R$ we immediately find the estimate 
\begin{equation}\label{wbetterstimate}
    \|w\|_{C^{4,1,\gamma}_T}\leq C(T,\gamma)(\|\tilde g-\delta\|_{G_T^l}+\|P_{K[0,\delta]}^\perp  f\|_{C^{0,0,\gamma}_T}+\|w_0\|_{C^{4,\gamma}}+\|f\|_{C^{0,0,\gamma}_T}^2).
\end{equation}
This is the first estimate claimed in the Theorem. To deduce the decay estimate we decompose $w=w^1+w^2$ where $w^1(t)\in\operatorname{span}_\R\set{1,\omega^1,\omega^2}$ for all $t\in[0,T]$ and $w^2(t)\perp_{L^2(\Sp^2_+)}\operatorname{span}_\R\set{1,\omega^1,\omega^2}$. Splitting the remainder $R$ and the initial value $w_0$ similarly into $R^1+R^2$ and $w^1_0+w^2_0$ we now examine separate problems for $w^1$ and $w^2$. $w^1$ satisfies
$$\left\{\begin{aligned}
&\dot w^1+\frac12\Delta(\Delta+2)w^1=\sum_{\mu=0}^2\frac{D_2 C^\mu[0,\delta]\dot{\tilde g}}{\|\nabla C^\mu[0,\delta]\|_{L^2(\Sp^2_+)}^2}\nabla C^\mu[0,\delta]+R^1,\\
&\frac{\partial w^1}{\partial\eta}=\frac{\partial \Delta w^1}{\partial\eta}=0,\\
&w^1(0)=w^1_0.
\end{aligned}\right.$$
We apply the Schauder Theory from e.g. \cite{simon,eidelman} to derive an estimate for $w^1$. Inserting the estimate for the remainder from above we find  
\begin{equation}\label{w1betterstimate}
    \|w^1\|_{C^{4,1,\gamma}_T}\leq C(T,\gamma)(\|\tilde g-\delta\|_{G_T^l}+\|w_0^1\|_{C^{4,\gamma}}+\|w_0\|_{C^{4,\gamma}}^2+\|f\|_{C^{0,0,\gamma}_T}^2).
\end{equation}
Next we establish an estimate for $w_0^1$ by expanding the equation $A[w_0+\psi_0[w_0,\tilde g_0],\tilde g_0]=2\pi=A[0,\delta]$. Using $A\in C^{l-6}\subset C^2$, $\psi_0\in C^{l-7}\subset C^2$, $\nabla A[0,\delta]=-2$ and $D_1\psi_0[0,\delta]=0$ we deduce 
\begin{equation}\label{woestimate}
2\left|\int_{\Sp^2_+}w_0d\mu_{\Sp^2}\right|=\left|\langle \nabla A[0,\delta],w_0\rangle_{L^2(\Sp^2_+)}\right|\leq C(\|w_0\|_{C^{4,\gamma}}^2+\|\tilde g-\delta\|_{G_T^ l}).
\end{equation}
By applying the same reasoning to the barycenter $C^ i$ we may derive a similar bound for $\int_{\Sp^2_+}w_0 \omega^ id\mu_{\Sp^2}$ as $\nabla C^i[0,\delta]=-\frac3{2\pi}\omega^i$. Hence $\|w^1_0\|_{C^{4,\gamma}}\leq C(\|w_0\|_{C^{4,\gamma}}^2+\|\tilde g-\delta\|_{G_T^ l})$. Combining this estimate with (\ref{w1betterstimate}) gives 
\begin{equation}\label{w1betterstimate2}
    \|w^1\|_{C^{4,1,\gamma}_T}\leq C(T)(\|\tilde g-\delta\|_{G_T^l}+\|P_{K[0,\delta]}f\|_{C^{0,0,\gamma}_T}+\|w_0\|_{C^{4,\gamma}}^2+\|f\|_{C^{0,0,\gamma}_T}^2).
\end{equation}
Next, we examine the problem that is solved by $w^2$.
$$\left\{\begin{aligned}
&\dot w^2+\frac12\Delta(\Delta+2)w^2=P_{K[0,\delta]}^\perp \left[D_2 W[0,\delta](\tilde g-\delta)-f\right]+R^2,\\
    &\frac{\partial w^2}{\partial\eta}=\frac{\partial \Delta w^2}{\partial\eta}=0,\\
&w^2(0)=w^2_0.
\end{aligned}\right.$$
As $w^2(t)\perp_{L^2(\Sp^2_+)}\operatorname{span}_\R\set{1,\omega^1,\omega^2}$ for all $t\geq 0$ we may use Appendix \ref{schauder} to derive the improved Schauder estimate.
\begin{align} 
\hspace{-.5cm}\|w^2(T)\|_{C^{4,\gamma}}&\leq Ce^{-6 T}\|w_0^2\|_{C^{4,\gamma}}+C(T)\bigg[\|\tilde g-\delta\|_{G_T^ l}+\|P_{K[0,\delta]}^\perp f\|_{C^{0,0,\gamma}_T}+\|R^2\|_{C^{0,0,\gamma}_T}\bigg]\nonumber\\
&\hspace{-1cm}\leq Ce^{-6 T}\|w_0^2\|_{C^{4,\gamma}}+C(T)\bigg[\|\tilde g-\delta\|_{G_T^    l}+\|P_{K[0,\delta]}^\perp f\|_{C^{0,0,\gamma}_T}+\|w_0\|_{C^{4,\gamma}}^2+\|f\|_{C^{0,0,\gamma}_T}^2\bigg]\label{w2betterstimate2}.
\end{align}
In the second step, we inserted the estimate for $R$ from above. 
The decay estimate follows by combining (\ref{w1betterstimate2}), (\ref{w2betterstimate2}), the estimate for $\|w^1_0\|_{C^{4,\gamma}}$ from above and shrinking the neighbourhood from which $w_0$ is taken so that $C(T)\|w_0\|_{C^{4,\gamma}}\leq Ce^{-6 T}$.  
\end{proof}

\section{Willmore flow with prescribed barycenter curve}\label{section4}
\setcounter{equation}0
We now apply the result from the previous section to derive the existence of a solution to the Willmore flow with \emph{prescribed barycenter curve}. That is, given a curve $\xi:[0,T]\rightarrow S$ with initial condition $\xi(0)=p$ for some $p\in S$ we investigate the following system that only contains the evolution equation for the graph function form (\ref{system}) with $\tilde J_i$ written out explicitly.
\begin{equation}\label{prescxisystem}
\left\{\begin{aligned}
    \tilde g(\dot f_u,\tilde \nu)=&\tau[u,\tilde g]-P_{K[u,\tilde g]}^\perp[W[u,\tilde g]]\\
    &+P_{K[u,\tilde g]}^\perp\bigg[I^i[u,\tilde g]\tilde g\bigg(\lambda(D_2 F^\lambda[\xi( t),f_{u}(t)])^{-1}D_1 F^\lambda[\xi( t),f_{u}(t)]b_i(\xi(t)),\tilde\nu\bigg)\bigg],\\
    B_T[u,\tilde g]&=0,\\
    u(0)&=u_0.
\end{aligned}\right.
\end{equation}
Ultimately we want to put $\tilde g:=\tilde g^{\xi,\lambda}$ but will keep $\tilde g$ abstract for now. To parameterize the curves $\xi$ near $p$ we use the chosen orthonormal frame $(b_1,b_2)$ in a neighbourhood of $p$. Given a vector field $\vec\beta:[0, T]\rightarrow D_{r_1}$ with $r_1$ as in Subsection \ref{terminilogy} we can consider the curve
$$\xi_{p,\vec\beta}:[0,T]\rightarrow S,\ \xi_{p,\vec\beta}(t):=f[p,\vec\beta(t)]=p+\beta^ i(t)b_i(p)+\varphi[p,\vec\beta(t)] N^S(p).$$
It is easy to verify that this accounts for all curves close to $p$ for $t\in[0,T]$. For $\Omega\in C^n$ and $n\geq 4$ the map $ C^{1,\frac\gamma4}([0, T], D_{r_1})\ni \vec\beta\rightarrow \xi_{p,\vec\beta}\in C^{1,\frac\gamma4}([0, T], S)$ is of class $C^{n-4}$. 
On a small neighbourhood of $(0,0,0)$, we may define the map
\begin{align*} 
J_i:&(-1,1)\times C^{4,1,\gamma}([0,T]\times \Sp^2_+)\times C^{1,\frac\gamma4}([0,T],\R^2)\rightarrow C^{0,0,\gamma}([0,T]\times \Sp^2_+),\\
&(\lambda, u, \vec\beta)\mapsto \lambda\bigg(D_2 F^\lambda[\xi_{p,\vec\beta},f_u]\bigg)^{-1}D_1 F^\lambda[\xi_{p,\vec\beta},f_u]b_i(\xi_{p,\vec\beta}).
\end{align*}
Let $\Gamma^{k}_{\ ij}:=\langle b_k, \nabla _{b_i}b_j\rangle$. Then we may follow the analysis from \cite{AK} (Proof of Theorem 2) where the following explicit coordinate expression for $J_i$ is derived:
\begin{equation}\label{formula}
\lambda \bigg(D_2 F^\lambda[a,x,z]\bigg)^{-1}D_1 F^\lambda[a, x,z]b_i(a)=e_i+
\begin{bmatrix}
(\lambda x^ j\Gamma^ k_{\ ij}(a)-h_{ik}^S(a)(\varphi[a, \lambda x]+\lambda z) )e_k\\
\lambda zh_{ik}^S(a)\partial_k\varphi[a,\lambda x]
\end{bmatrix}
\end{equation}
Using this formula it is readily checked that $J_i$ is of class $C^{n-4}$. Formula (\ref{formula}) also implies that for $\|\vec\beta\|_{C^1}\leq 1$ and $\|u\|_{C^{4,1,\gamma}_T}\leq 1$ we have the estimate
\begin{equation}\label{Jeclose}
\|J_i[\lambda, u,\vec\beta]-e_i\|_{C^{0,0,\gamma}_T}\leq C(\Omega,\gamma, T)|\lambda|.
\end{equation}

Comparing the evolution equation (\ref{prescxisystem}) to the abstract perturbation problem we studied in Theorem \ref{firstestimate} we see 
\begin{equation}\label{thisisthechoiceforf}
I^ i[u,\tilde g]P^\perp_{K[u,\tilde g]}\left[\tilde g\left(\lambda \bigg(D_2 F^\lambda[\xi,f_u]\bigg)^{-1}D_1 F^\lambda[\xi,f_u]b_i(\xi),\tilde\nu\right)\right]\hspace{.1cm}\leftrightarrow\hspace{.1cm}f.
\end{equation}

Suppose that $l\geq 9$ and $n\geq l$ are integers. With these choices $n-4\geq l-8\geq 1$. On a small neighbourhood of $(0,0,\delta,0,0)$ we introduce the operator
\begin{align*} 
&\hat Q_T:X_T\times X_0\times G_T^l\times (-1,1)\times C^{1,\frac\gamma4}([0,T],\R^2)\rightarrow C^{0,0,\gamma}([0,T]\times \Sp^2_+)\\
&\hat Q_T[w,w_0,\tilde g, \lambda,\vec\beta]:=\bar Q_T[w,w_0, \tilde g,0]+\left(P_{K[u,\tilde g]}^\perp\bigg[I^i[u,\tilde g] \tilde g(J_i[\lambda, u,\vec\beta],\tilde\nu[u,\tilde g])\bigg]\bigg|_{u=w+\psi_T[w,\tilde g]},0\right).
\end{align*}
Recall that we use the reference point $p\in S$ to define the curve $\xi_{p,\vec\beta}$. We already know that $\bar Q_T$ is of class $C^{l-8}$. It is also not difficult to check that $I^i\in C^{l-8}$, $\tilde\nu\in C^{l-8}$ and $J^i\in C^{n-4}\subset C^{l-8}$ as maps into $C^{0,0,\gamma}([0,T]\times\Sp^2_+)$. Hence $\hat Q_T\in C^{l-8}\subset C^1$. As with the abstract perturbation problem we now prove that there exist a unique solution $w=w[w_0,\tilde g, \lambda,\vec\beta]$ for small enough data.  

\begin{theorem}[Short time existence with prescribed barycenter curve]\label{prescirbedcurve1}\hfill\\
Let $T>1$, $n\geq l$ and $l\geq 9$. There exist
$\theta(T)>0$, $\theta'(T)>0$, $\sigma(T)>0$, $\rho(T)>0$, $\lambda_0(T)>0$
and a $C^{l-8}$-map $w:X_0(\theta(T))\times G_T^l(\delta;\sigma(T))\times[-\lambda_0(T),\lambda_0(T)]\times C^{1,\frac\gamma4}_T(\rho(T))\rightarrow X_T(\theta'(T))$ such that within the respective neighbourhoods
$$ \hat Q_T[w,w_0,\tilde g, \lambda,\vec\beta]=(0,0)\hspace{.2cm}\Leftrightarrow\hspace{.2cm} w=w[w_0,\tilde g,\lambda,\vec\beta].$$
For $l\geq 10$ the solution $w$ satisfies
\begin{align*} 
&\|w\|_{C^{4,1,\gamma}_T}\leq C(T)\left[\|w_0\|_{C^{4,\gamma}}+\|\tilde g-\delta\|_{G_T^l}+\lambda^2\right]\\
&\|w(T)\|_{C^{4,\gamma}}\leq Ce^{-6T}\|w_0\|_{C^{4,\gamma}}+C(T)\left[\|\tilde g-\delta\|_{G_T^l}+\lambda^2\right].
\end{align*}
\end{theorem}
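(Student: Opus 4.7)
The approach is to apply the implicit function theorem to $\hat Q_T$ at the base point $(0,0,\delta,0,0)$ and then re-interpret the resulting solution as a solution to $\bar Q_T=0$ with a controlled inhomogeneity $f$, so that the quantitative estimates of Theorem \ref{firstestimate} deliver the two bounds.

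First I would verify $\hat Q_T[0,0,\delta,0,0]=(0,0)$. At this base point $u = w+\psi_T[w,\tilde g]$ equals $0$ (by $\psi_T[0,\delta]=0$), and for the round half-sphere $W[0,\delta]=0$ (since $H\equiv 2$, $h^0=0$ and the Euclidean Ricci tensor vanishes), forcing $I^i[0,\delta]=0$. Combined with $\bar Q_T[0,0,\delta,0]=0$ this gives the vanishing. Next I would compute $D_1\hat Q_T[0,0,\delta,0,0]$ and show it agrees with $D_1\bar Q_T[0,0,\delta,0]$, which was already shown to be an isomorphism in the proof of Theorem \ref{firstestimate}. Writing the extra term as $E[w,\tilde g,\lambda,\vec\beta]:=P_{K[u,\tilde g]}^\perp[I^i[u,\tilde g]\tilde g(J_i,\tilde\nu[u,\tilde g])]$, every contribution to $D_1 E[0,\delta,0,0]$ except the one hitting $I^i$ carries the vanishing prefactor $I^i[0,\delta]=0$. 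The surviving contribution is a scalar multiple of $P_{K[0,\delta]}^\perp[\tilde g(e_i,\tilde\nu)|_{u=0,\tilde g=\delta}] = P_{K[0,\delta]}^\perp[-\omega^i]$, which also vanishes since $\omega^i$ is a multiple of $\nabla C^i[0,\delta]\in K[0,\delta]$. The implicit function theorem then yields the $C^{l-8}$-map $w[w_0,\tilde g,\lambda,\vec\beta]$.

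For the estimates I would re-interpret $\hat Q_T=0$ as $\bar Q_T[w,w_0,\tilde g,f]=0$ with
$$f := I^i[u,\tilde g]\,\tilde g(J_i[\lambda,u,\vec\beta],\tilde\nu[u,\tilde g]),\qquad u:= w+\psi_T[w,\tilde g],$$
and plug into Theorem \ref{firstestimate}. Two smallness facts are used: $I^i[0,\delta]=0$ gives $\|I^i\|_{C^{0,\gamma/4}_T} \leq C(T)(\|w\|_{C^{4,1,\gamma}_T}+\|\tilde g-\delta\|_{G_T^l})$, while Estimate (\ref{Jeclose}) gives $\|J_i-e_i\|_{C^{0,0,\gamma}_T}\leq C(T)\lambda$. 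Splitting $f = I^i\,\tilde g(e_i,\tilde\nu) + I^i\,\tilde g(J_i-e_i,\tilde\nu)$ and using $\tilde g(e_i,\tilde\nu)|_{u=0,\tilde g=\delta} = -\omega^i\in K[0,\delta]$ to gain an extra smallness factor in the first summand upon applying $P_{K[0,\delta]}^\perp$, one obtains
$$\|P_{K[0,\delta]}^\perp f\|_{C^{0,0,\gamma}_T} + \|f\|^2_{C^{0,0,\gamma}_T} \leq C(T)\bigl[(\|w\|_{C^{4,1,\gamma}_T}+\|\tilde g-\delta\|_{G_T^l})^2 + \lambda^2\bigr].$$
Inserting this into the first estimate of Theorem \ref{firstestimate} and absorbing the quadratic-in-$w$ term on the left (valid on a sufficiently small neighborhood) gives the first claimed bound. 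The decay estimate then follows by feeding this bound back into the second estimate of Theorem \ref{firstestimate} and shrinking the $w_0$-neighborhood so that $C(T)\|w_0\|_{C^{4,\gamma}}\leq Ce^{-6T}$, thereby absorbing the $C(T)\|w_0\|^2$ contribution into the exponentially decaying term, exactly as at the end of the proof of Theorem \ref{firstestimate}.

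The main obstacle, and the reason one obtains $\lambda^2$ rather than $\lambda$ on the right-hand side, is precisely the double cancellation $I^i[0,\delta]=0$ together with $P_{K[0,\delta]}^\perp\omega^i=0$. Without both of these the inhomogeneity $f$ would only vanish to linear order in $\lambda$ along the direction orthogonal to $K[0,\delta]$, and no better control than $\|w\|\lesssim\lambda$ could be extracted from Theorem \ref{firstestimate}; the $\lambda^2$ gain is ultimately what will allow the barycenter evolution (\ref{evolutionlaws}) to be treated perturbatively in the degenerate limit $\lambda\to 0^+$.
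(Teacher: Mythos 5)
Your proposal is correct and follows essentially the same route as the paper: verify $\hat Q_T[0,0,\delta,0,0]=0$, show $D_1\hat Q_T[0,0,\delta,0,0]=D_1\bar Q_T[0,0,\delta,0]$ via the double cancellation $I^i[0,\delta]=0$ and $P_{K[0,\delta]}^\perp(\omega^i)=0$, apply the implicit function theorem, and then feed the resulting $f$ with the splitting $J_i=e_i+O(\lambda)$ and $\tilde g(e_i,\tilde\nu)\approx-\omega^i$ into the quantitative estimates of Theorem \ref{firstestimate}, absorbing the quadratic-in-$w$ and quadratic-in-$w_0$ remainders. The paper phrases the intermediate bounds (\ref{eqeq1})--(\ref{diegl2}) in terms of $u=w+\psi_T[w,\tilde g]$ before converting back to $w$ via (\ref{naiveu1}), but this is only a notational detour and the substance is the same.
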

\begin{proof}
First we prove existence. As deduced above $\hat Q_T\in C^1$. Next note that $\bar Q_T[0,0,\delta,0]=(0,0)$ and $I^i[u,\delta]=0$ and thus $\hat Q_T[0,0,\delta,0,0]=(0,0)$. 
Next we compute the differential with respect to the first argument. To do this we note that $I^i[0,\delta]=0$ and that $J_i[0,0,0]=e_i$. Hence we learn that the map 
\begin{equation}\label{concretefdef}
\begin{aligned}
&\mathcal I:X_T\times X_0\times G_T^l\times (-1,1)\times C^{1,\frac\gamma4}([0,T],\R^2)\rightarrow C^{0,0,\gamma}([0,T]\times \Sp^2_+)\\
&\mathcal I[w,w_0,\tilde g,\lambda,\vec\beta]:=P_{K[u,\tilde g]}^\perp\bigg[I^i[u,\tilde g] \tilde g(J_i[\lambda, u,\vec\beta],\tilde\nu[u,\tilde g])\bigg]\bigg|_{u=w+\psi_T[w,\tilde g]}
\end{aligned}
\end{equation}
satisfies
$$D_1 \mathcal I[0,0, \delta,0,0]=P_{K[0,\delta]}^\perp(-\omega^i)D_1 I^i[0,\delta]=0.$$
Since $D_1\psi_T[0,\delta]=0$ we get $D_1\hat Q_T[0,0,\delta,0,0]=D_1\bar Q_T[0,0,\delta,0]$ which is an isomorphism as we have already discussed in Theorem \ref{firstestimate}. The implicit function theorem gives the the existence and uniqueness as well as the estimate
\begin{equation}\label{naivew1}
\|w\|_{C^{4,1,\gamma}_T}\leq C(T)\left[\|w_0\|_{C^{4,\gamma}}+|\lambda|+\|\vec\beta\|_{C^{1,\frac\gamma4}_T}+\|\tilde g-\delta\|_{G_T^l}\right].
\end{equation}
We put $u:=w+\psi_T[w,\tilde g]$. After potentially shrinking the neighbourhoods from where $\lambda,\vec\beta,w_0$ and $\tilde g$ are taken we can use the second remark after Lemma \ref{boundaryvalueslem} to get
\begin{align}
\|u\|_{C^{4,1,\gamma}_T}&\leq 2\|w\|_{C^{4,1,\gamma}_T}+C(T)\|\tilde g-\delta\|_{G_T^l}\label{naiveu1}\\
&\overset{(\ref{naivew1})}\leq C(T)(\|w_0\|_{C^{4,\gamma}}+\|\tilde g-\delta\|_{G_T^l}+|\lambda|+\|\vec\beta\|_{C^{1,\frac\gamma4}_T}) \label{naiveu2}.
\end{align}

We now prove the estimates claimed in the Theorem. $w$ satisfies the equation $\bar Q_T[w,w_0,\tilde g,f]=(0,0)$ with $f$ as in (\ref{thisisthechoiceforf}).
Thus Theorem \ref{firstestimate} can be used to obtain better estimates. To apply this theorem there are two terms that we need to estimate:
$$\left\|P_{K[0,\delta]}^\perp\bigg[I^i[u,\tilde g] \tilde g(J_i[\lambda, u,\vec\beta],\tilde\nu[u,\tilde g])\bigg]\right\|_{C^{0,0,\gamma}_T}
\hspace{.5cm}\text{and}\hspace{.5cm}
\left\|I^i[u,\tilde g] \tilde g(J_i[\lambda, u,\vec\beta],\tilde\nu[u,\tilde g])\right\|_{C^{0,0,\gamma}_T}^2.$$

\ \\
\noindent
\textbf{First term}\ \\
First, we may take the term $I^i[u,\tilde g]$ out of the projection operator, as it only depends on time. Note that $I^i$ is of class $C^{l-8}\subset C^1$ for $l\geq 9$. Using Estimate (\ref{naiveu2}) for $u$ to see that $u$ is small we get 
\begin{equation}\label{eqeq1}
\|I^i[u,\tilde g]\|_{C^{0,\frac\gamma4}_T}\leq C(T)\left(\|u\|_{C^{4,1,\gamma}_T}+\|\tilde g-\delta\|_{G_T^l}\right).
\end{equation}
Next we use the Equation (\ref{formula}) and can write $J_i=e_i+R_i$ where we know from Equation (\ref{Jeclose}) that for small enough $\vec\beta$ we get $\|R_i\|_{C^{0,0,\gamma}_T}\leq C(T)|\lambda|.$ Clearly
\begin{equation}\label{eqeq2}
\tilde g(J_i[\lambda,u,\vec\beta],\tilde\nu[u,\tilde g])=\tilde g(e_i, \tilde\nu[u,\tilde g])+\tilde g(R_i[\lambda,\xi_{p,\vec\beta},u], \tilde\nu[u,\tilde g]).
\end{equation}

We deal with both terms individually:
\begin{enumerate}
        \item For $u=0$ and $\tilde g=\delta$ we have $\tilde g(e_i,\tilde\nu[u,\tilde g])=-\omega^ i$. As the map $\tilde\nu$ is of class $C^{l-6}\subset C^1$ we get 
    \begin{equation}\label{eqeq3}
    \|\tilde g(e_i, \tilde\nu[u,\tilde g])+\omega^ i\|_{C^ {0,0,\gamma}_T}\leq C(T)(\|u\|_{C^{4,1,\gamma}_T}+\|\tilde g-\delta\|_{G_T^l}).
    \end{equation}
    
    \item For the second term we use the estimate we have for $R_i$ and learn that 
        \begin{equation}\label{eqeq5}
        \|\tilde g(R_i[\lambda,\xi_{p,\vec\beta},u], \tilde\nu[u,\tilde g])\|_{C^{0,0,\gamma}_T}\leq C(T)|\lambda|\|\tilde\nu\|_{C^{0,0,\gamma}_T}\|\tilde g\|_{G_T^l}.
        \end{equation}
        The last two factors are not small but bounded.
\end{enumerate}
 Combining Estimates (\ref{eqeq1}), (\ref{eqeq2}), (\ref{eqeq3}) and  (\ref{eqeq5}) and using $P_{K[0,\delta]}^\perp(\omega^ i)=0$ we learn 
\begin{align}
    \left\|P_{K[0,\delta]}^\perp\bigg[I^i[u,\tilde g] \tilde g(J_i[\lambda, u,\vec\beta],\tilde\nu[u,\tilde g])\bigg]\right\|_{C^{0,0,\gamma}_T}&\leq C(T)\left(\|u\|_{C^{4,1,\gamma}_T}^2+\|\tilde g-\delta\|_{G_T^l}^2+\lambda^2\right)\nonumber\\
    &\overset{(\ref{naiveu1}) }\leq  C(T)\left(\|w\|_{C^{4,1,\gamma}_T}^2+\|\tilde g-\delta\|_{G_T^l}^2+\lambda^2\right)\label{diegl1}.
\end{align}

\pagebreak
\ \\
\noindent
\textbf{Second term}\ \\
To estimate $I^i$ we can reuse Equation (\ref{eqeq1}). Next we note that $\|\tilde\nu[u,\tilde g]\|_{C^{0,0,\gamma}_T}$ and $\|\tilde g\|_{G_T^ l}$ are both bounded. Finally, we can bound $J^i$ by using the formula (\ref{formula}) and Estimate (\ref{Jeclose}). So
\begin{align} 
\left\|\bigg[I^i[u,\tilde g] \tilde g(J_i[\lambda, u,\vec\beta],\tilde\nu[u,\tilde g])\bigg]\right\|_{C^{0,0,\gamma}_T}^2&\leq C(T)(\|u\|_{C^{4,1,\gamma}_T}^2+\|\tilde g-\delta\|_{G_T^l}^2+\lambda^2)\nonumber\\
&\overset{(\ref{naiveu1})}\leq C(T)(\|w\|_{C^{4,1,\gamma}_T}^2+\|\tilde g-\delta\|_{G_T^l}^2+\lambda^2)\label{diegl2}.
\end{align} 

\ \\
\noindent
\textbf{Combining both estimates}\ \\
We apply Theorem \ref{firstestimate} and insert the estimates (\ref{diegl1}) and (\ref{diegl2}) to get
$$\|w\|_{C^{4,1,\gamma}_T}\leq C(T)\left[\|w_0\|_{C^{4,\gamma}}+\|\tilde g-\delta\|_{G_T^l}+\lambda^2+\|w\|_{C^{4,1,\gamma}_T}^2\right].$$
Using Estimate (\ref{naivew1}) we see that after potentially shrinking the neighbourhoods we may assume $C(T)\|w\|_{C^{4,1,\gamma}_T}\leq\frac12$ and can thus absorb $\|w\|_{C^{4,1,\gamma}_T}^2$ to the left hand side. This gives 
\begin{equation}\label{dfghjdfghjfghj}
\|w\|_{C^{4,1,\gamma}_T}\leq C(T)\left[\|w_0\|_{C^{4,\gamma}}+\|\tilde g-\delta\|_{G_T^l}+\lambda^2\right],
\end{equation}
which is the first estimate claimed in the Theorem. For the second part we use the Estimate for $\|w(T)\|_{C^{4,\gamma}}$ from Theorem \ref{firstestimate} and insert Estimates  (\ref{diegl1}) and (\ref{diegl2}) to obtain 
\begin{align*}
    \|w(T)\|_{C^{4,\gamma}}\leq & Ce^{-6T}\|w_0\|_{C^{4,\gamma}}+C(T)\left[\|\tilde g-\delta\|_{G_T^l}+\lambda^2+\|w\|_{C^{4,1,\gamma}_T}^2\right]\\
\overset{(\ref{dfghjdfghjfghj})}\leq &Ce^{-6T}\|w_0\|_{C^{4,\gamma}}+C(T)\left[\|\tilde g-\delta\|_{G_T^l}+\lambda^2\right]+C(T)\|w_0\|_{C^{4,\gamma}}^2.
\end{align*}
After potentially shrinking the neighbourhood to which $w_0$ belongs we may assume that $\|w_0\|_{C^{4,\gamma}}C(T)\leq e^{-6T}$ and have therefore established the decay estimate for $w(T)$.
\end{proof}

In Theorem \ref{prescirbedcurve1} we deal with an arbitrary metric. In the problem we are investigating we are, however, interested into the particular choice of metric that is $\tilde g^ {\xi_{p,\vec\beta},\lambda}$ with $\xi_{p,\vec\beta}$ being defined as above. As we will now investigate $\tilde g$ as a function depending on $\lambda$ and $\vec\beta$ we will write $\tilde g_p[\lambda,\vec\beta]:=\tilde g^ {\xi_{p,\vec\beta},\lambda}$.\\

\begin{korollar}[Flow with prescribed barycenter curve]\label{prescirbedcurve3}\hfill\\
Let $p\in S$, $T>1, r\geq 2$ and suppose that $\Omega$ is of class $C^{13+2r}$. There exists $\theta(T)>0$, $\theta'(T)>0$, $\rho(T)>0$, $\lambda_0(T)>0$ and a $C^r$-map 
$u_p:X_0(\theta(T))\times[-\lambda_0(T),
\lambda_0(T)]\times C^{1,\frac\gamma4}_T(\rho(T))\rightarrow C^{4,1,\gamma}_T(\theta'(T))$
such that $u_p[w_0,\lambda,\vec\beta]$ is the unique solution to (\ref{prescxisystem}) with prescribed barycenter curve $\xi=\xi_{p,\vec\beta}$, metric $\tilde g=\tilde g_p[\lambda,\vec\beta]$ and initial value $u_0:=w_0+\psi_0[w_0,\tilde g^{p,\lambda}]$. $u$ satisfies the estimates
\begin{align*}
&\|u_p\|_{C^{4,1,\gamma}_T}\leq C(T)\left[\|u_0\|_{C^{4,\gamma}}+|\lambda|\right]\hspace{.3cm}\text{and}\hspace{.3cm}\|u_p(T)\|_{C^{4,\gamma}}\leq Ce^{-6T}\|u_0\|_{C^{4,\gamma}}+C(T)|\lambda|.
\end{align*}
\end{korollar}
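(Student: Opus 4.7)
The plan is to apply Theorem \ref{prescirbedcurve1} with the concrete metric $\tilde g=\tilde g_p[\lambda,\vec\beta]$, to define $u_p:=w+\psi_T[w,\tilde g_p[\lambda,\vec\beta]]$ via Lemma \ref{boundaryvalueslem}, and then to translate the resulting estimates from $w_0$ to $u_0$ using the direct-sum decomposition of Lemma \ref{decomposition}. The main bookkeeping concern, and the reason for the apparently ad hoc assumption $\Omega\in C^{13+2r}$, is the regularity of the metric meta-map; everything else is a routine application of the abstract machinery already built.

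First I would verify the regularity and smallness of the meta-map $(\lambda,\vec\beta)\mapsto\tilde g_p[\lambda,\vec\beta]\in G_T^l$. Choosing $l=r+8$ ensures that Theorem \ref{prescirbedcurve1} outputs a map of class $C^{l-8}=C^r$. Under the hypothesis $\Omega\in C^{13+2r}$ one has $\varphi\in C^{12+2r}$, and the statement following formula (\ref{metricformula}) yields that $(p,\lambda)\mapsto\tilde g^{p,\lambda}\in C^l(Z_2,M_3(\R))$ is of class $C^{n-l-3}=C^{r+2}$ with $n=13+2r$. Precomposition with the $C^{n-4}$-regular map $\vec\beta\mapsto\xi_{p,\vec\beta}$ together with the $C^{1,\gamma/4}$ time-dependence inherited from $\vec\beta$ shows $(\lambda,\vec\beta)\mapsto\tilde g_p[\lambda,\vec\beta]$ is of class at least $C^r$ into $G_T^l$. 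Smallness is read off directly from (\ref{metricformula}): since $\varphi[q,0]=0$ and $D_2\varphi[q,0]=0$ uniformly for $q$ near $p$, every off-identity entry of $\tilde g^{\xi_{p,\vec\beta}(t),\lambda}$ is of order $\lambda$, so that $\|\tilde g_p[\lambda,\vec\beta]-\delta\|_{G_T^l}\leq C(\Omega,T)|\lambda|$ on the relevant neighborhoods.

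Next I would invoke Theorem \ref{prescirbedcurve1} with this metric, shrinking $\lambda_0(T)$ if necessary so that the theorem's smallness condition $\|\tilde g-\delta\|_{G_T^l}<\sigma(T)$ is met. This yields $w=w[w_0,\tilde g_p[\lambda,\vec\beta],\lambda,\vec\beta]\in X_T$, and I would set $u_p[w_0,\lambda,\vec\beta]:=w+\psi_T[w,\tilde g_p[\lambda,\vec\beta]]$. By construction, $B_T[u_p,\tilde g_p]=0$ from Lemma \ref{boundaryvalueslem}, the evolution equation in (\ref{prescxisystem}) is exactly $\hat Q_T=0$ under this metric, and the initial condition $u_p(0)=w_0+\psi_0[w_0,\tilde g^{p,\lambda}]=u_0$ follows from the first remark after Lemma \ref{boundaryvalueslem}. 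Uniqueness within the stated neighborhoods holds because any competitor $u$ decomposes as $u=w'+\tilde w'$ by Lemma \ref{decomposition}, the boundary condition forces $\tilde w'=\psi_T[w',\tilde g_p]$ by Lemma \ref{boundaryvalueslem}, and the uniqueness clause of Theorem \ref{prescirbedcurve1} then forces $w'=w$. The $C^r$ regularity of $u_p$ follows from composing the $C^{r+2}$ metric meta-map, the $C^r$ map $w$, and the $C^{r+1}$ boundary map $\psi_T$.

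Finally the estimates. Lemma \ref{decomposition} gives $\|w_0\|_{C^{4,\gamma}}=\|\pi_{X_0}u_0\|_{C^{4,\gamma}}\leq C\|u_0\|_{C^{4,\gamma}}$. Combining the estimate $\|w\|_{C^{4,1,\gamma}_T}\leq C(T)[\|w_0\|_{C^{4,\gamma}}+\|\tilde g_p-\delta\|_{G_T^l}+\lambda^2]$ from Theorem \ref{prescirbedcurve1} with $\|\tilde g_p-\delta\|_{G_T^l}\leq C|\lambda|$, and using the second remark after Lemma \ref{boundaryvalueslem} in the form $\|\psi_T[w,\tilde g_p]\|_{C^{4,1,\gamma}_T}\leq C(T)(|\lambda|+\|w\|_{C^{4,1,\gamma}_T}^2)$, I would absorb the quadratic terms by shrinking $\theta(T)$ and $\lambda_0(T)$ to arrive at $\|u_p\|_{C^{4,1,\gamma}_T}\leq C(T)[\|u_0\|_{C^{4,\gamma}}+|\lambda|]$. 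The decay estimate is obtained analogously, invoking the decay bound for $\|w(T)\|_{C^{4,\gamma}}$ from Theorem \ref{prescirbedcurve1}, the identity $\psi_T[w,\tilde g_p](T)=\psi_0[w(T),\tilde g_p(T)]$, and the corresponding quadratic bound on $\psi_0$, followed by a final smallness absorption.
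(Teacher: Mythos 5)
Your approach coincides with the paper's: apply Theorem \ref{prescirbedcurve1} to the concrete metric $\tilde g_p[\lambda,\vec\beta]$, set $u_p:=w+\psi_T[w,\tilde g_p]$, and transfer the estimates from $w_0$ to $u_0$ using Lemma \ref{decomposition}. The regularity bookkeeping (choice $l=8+r$, the estimate $\|\tilde g_p-\delta\|_{G_T^l}\leq C|\lambda|$, the $C^r$ regularity of the composed maps) and the uniqueness argument all match the paper's treatment, and your sup-norm estimate goes through as you describe.

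There is, however, one genuine slip in the decay step: the identity $\psi_T[w,\tilde g_p](T)=\psi_0[w(T),\tilde g_p(T)]$ is in general false. The first remark after Lemma \ref{boundaryvalueslem} only gives this at $t=0$, and the reason it works there is precisely the normalisation built into the definition of $Y_T$: $\Delta^2\tilde w(0,\cdot)=c_0$ is a \emph{constant}, so $\psi_T[w,\tilde g](0)\in Y_0$. At $t=T$ one has $\Delta^2\tilde w(T,\cdot)=c_1(T)-\dot{\tilde w}(T,\cdot)$, and $\dot{\tilde w}(T,\cdot)$ has no reason to be constant, so $\psi_T[w,\tilde g](T)\notin Y_0$ and the uniqueness clause that would force the identity does not apply. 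Fortunately you do not need it: the second remark after Lemma \ref{boundaryvalueslem} already bounds the whole parabolic norm, so $\|\psi_T[w,\tilde g_p](T)\|_{C^{4,\gamma}}\leq\|\psi_T[w,\tilde g_p]\|_{C^{4,1,\gamma}_T}\leq C(T)\bigl(\|\tilde g_p-\delta\|_{G_T^l}+\|w\|_{C^{4,1,\gamma}_T}^2\bigr)\leq C(T)\bigl(|\lambda|+\|w_0\|_{C^{4,\gamma}}^2+\lambda^2\bigr)$. Then shrinking $\theta(T)$ so that $C(T)\|w_0\|_{C^{4,\gamma}}\leq e^{-6T}$ absorbs the quadratic term into $Ce^{-6T}\|u_0\|_{C^{4,\gamma}}$, exactly as is done at the end of the proof of Theorem \ref{prescirbedcurve1}. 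With that replacement the decay estimate follows as you intended.
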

\begin{proof}
The explicit formula for $\tilde g^{\xi,\lambda}$ in Equation (\ref{metricformula}) as well as the regularity of $\varphi$ combined with the regularity of the map $\vec\beta\mapsto \xi_{p,\vec\beta}$ discussed above readily imply that 
$$\tilde g_p:(-1,1)\times C^{1,\frac\gamma4}([0,T],\R^2)\rightarrow G_T^{8+r},\ (\lambda,\vec\beta)\mapsto \tilde g_p[\lambda,\vec\beta]$$
is well defined on a small neighbourhood of $(0,0)$ and of class $C^r$. As long as $\|\vec\beta\|_{C^{1,\frac\gamma4}_T}\leq 1$ it is easy to derive the estimate  
\begin{equation}\label{metricestimate}
\|\tilde g_p[\lambda,\vec\beta]\|_{G_T^{8+r}}\leq C(S,T)|\lambda|.
\end{equation}
Put $l=8+r$. Due to (\ref{metricestimate}) we may choose $\lambda_0(T)$ and $\rho(T)$ small and then use $\tilde g_p[\lambda,\vec\beta]$ in Theorem \ref{prescirbedcurve1}. This allows us to define
$$\bar w_p[w_0,\lambda,\vec\beta]:=w[w_0,\tilde g_p[\lambda,\beta],\lambda, \vec\beta]$$
with $w$ as in Theorem \ref{prescirbedcurve1}. $\bar w$ is well defined on a neighbourhood of $(0,0,0)$ and of class $C^{r}$ as $\tilde g_p[\cdot,\cdot]\in C^r$ and $w$ is of class $C^{l-8}=C^r$. On a small neighbourhood of $(0,0,0)$ we now define a map $u_p:X_0\times (-1,1)\times C^{1,\frac\gamma4}([0,T], \R^2)\rightarrow C^{4,1,\gamma}([0,T]\times \Sp^2_+)$ by
$$u_p[w_0,\lambda,\vec\beta]:=\bar w_p[w_0,\lambda,\vec\beta]+\psi_T[\bar w_p[w_0,\lambda,\vec\beta],\tilde g_p[\lambda,\vec\beta]].$$
$u_p\in C^r$ as $\psi_T\in C^{l-8}=C^r$ by Lemma \ref{boundaryvalueslem}. Using Lemma \ref{boundaryvalueslem} and Theorem \ref{prescirbedcurve1} we see that $u_p$ has the claimed properties. 
\end{proof}

\section{Proof of Theorem 1}\label{proofontheorem1}
\setcounter{equation}0
We consider the space 
$$C^{1,\frac\gamma4}_0([0,T],\R^2):=\set{\vec\beta\in C^{1,\frac\gamma4}([0,T],\R^2)\ |\ \beta(0)=0}.$$
Given $p\in S$ and small $\lambda>0$, $w_0\in X_0$ and $\vec\beta\in C^{1,\frac\gamma4}_0([0, T], \R^2)$ we get a function $u_p[w_0,\lambda,\vec\beta]$. The pair $(u_p[w_0,\lambda,\vec\beta],\xi_{p,\vec\beta})$ is a solution to (\ref{system}) with initial values $\xi(0)=p$ and $u_p(0)=w_0+\psi_0[w_0,\tilde g^{p,\lambda}]$ if and only if 
\begin{equation}\label{xiconditionode}
\langle b_i(\xi_{p,\vec\beta}(t)),\dot\xi_{p,\vec\beta}(t)\rangle=-\lambda I^i[u_p[w_0,\lambda,\vec\beta],\tilde g_p[\lambda,\vec\beta]].
\end{equation}
To find such $\vec\beta$ we again employ the implicit function theorem. For that we must first define a suitable operator. On a small neighbourhood for $(0,0,0)$ we put
\begin{align*} 
&\mathcal T_p:C^{1,\frac\gamma4}_0([0,T],\R^2)\times X_0\times (-1,1)\rightarrow (C^{0,\frac\gamma4}([0,T],\R))^2\\
&(\vec\beta,w_0,\lambda)\mapsto \left(\langle b_i\circ\xi_{p,\vec\beta}, \dot\xi_{p,\vec\beta}\rangle+\lambda I^i[u_p[w_0,\lambda,\vec\beta],\tilde g_p[\lambda,\vec\beta]]\right)_{i=1,2}.
\end{align*}
Clearly (\ref{xiconditionode}) is equivalent to finding zeros of $\mathcal T_p$. 

\begin{lemma}[Short time existence of the barycenter curve]\label{shorttime}\ \\
Let $p\in S$, $r\geq 2$ and suppose $\Omega\in C^{13+2r}$. For all $T>1$ there exist $\theta(T)>0$, $\rho(T)>0$, $\lambda(T)>0$ and a $C^r$-map $\vec\beta_p:[-\lambda_0(T),\lambda_0(T)]\times X_0(\theta(T))\rightarrow C^{1,\frac\gamma4}_T(\rho(T))$ such that within the respective neighbourhoods
$$
\mathcal T_p[\vec\beta, w_0,\lambda]\hspace{.2cm}\Leftrightarrow\hspace{.2cm}\vec\beta=\vec\beta_p[\lambda, w_0].
$$
$\vec \beta_p$ satisfies the estimate $\|\vec\beta_p\|_{C^{1,\frac\gamma4}_T}\leq C(T)(|\lambda|+\|w_0\|_{C^{4,\gamma}})$.
\end{lemma}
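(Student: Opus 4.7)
The plan is to apply the implicit function theorem to $\mathcal T_p$ at the base point $(\vec\beta, w_0, \lambda) = (0, 0, 0)$; the identity (\ref{xiconditionode}) characterising admissible barycenter curves is precisely the equation $\mathcal T_p = 0$.

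First I verify the hypotheses. Corollary~\ref{prescirbedcurve3} (with the identification $l = 8 + r$) shows that $u_p$ is of class $C^r$, and its proof establishes that $\tilde g_p \in C^r$ as well. Combined with the smoothness of $\vec\beta \mapsto \xi_{p, \vec\beta}$ (class $C^{n-4} \supseteq C^r$ since $\Omega \in C^{13 + 2r}$) and with the regularity $I^i \in C^{l - 8} = C^r$, the operator $\mathcal T_p$ is $C^r$ on a neighbourhood of the origin. At the base point $\xi_{p, 0}(t) \equiv p$ gives $\dot\xi_{p, 0} = 0$, while the prefactor $\lambda$ annihilates the second summand, so $\mathcal T_p[0, 0, 0] = 0$.

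Next I compute $D_1 \mathcal T_p[0, 0, 0]$. Writing $\xi_{p, \vec\beta}(t) = p + \beta^i(t) b_i(p) + \varphi[p, \vec\beta(t)] N^S(p)$ with $\varphi[p, \cdot] = O(|\cdot|^2)$, one obtains for a direction $\vec\alpha \in C^{1, \gamma/4}_0([0, T], \R^2)$ that
$$\frac{d}{ds}\bigg|_{s = 0} \langle b_i(\xi_{p, s\vec\alpha}(t)), \dot\xi_{p, s\vec\alpha}(t) \rangle = \dot\alpha^i(t),$$
while the $\lambda I^i$ term contributes zero by virtue of the prefactor $\lambda = 0$. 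Hence $D_1 \mathcal T_p[0, 0, 0] \vec\alpha = \dot{\vec\alpha}$. The constraint $\vec\alpha(0) = 0$ built into the source space makes $\partial_t : C^{1, \gamma/4}_0([0, T], \R^2) \to (C^{0, \gamma/4}([0, T], \R))^2$ an isomorphism with bounded inverse $\vec f \mapsto \int_0^{\cdot} \vec f(s) \, ds$.

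The implicit function theorem therefore supplies the neighbourhoods and the $C^r$-map $\vec\beta_p[\lambda, w_0]$. For the quantitative estimate, the standard IFT bound reads $\|\vec\beta_p[\lambda, w_0]\|_{C^{1, \gamma/4}_T} \leq C \|\mathcal T_p[0, w_0, \lambda]\|$; evaluated at $\vec\beta = 0$ the right-hand side equals $|\lambda| \cdot \|I^i[u_p[w_0, \lambda, 0], \tilde g_p[\lambda, 0]]\|_{C^{0, \gamma/4}_T}$. Since the round Euclidean half-sphere has $H \equiv 2$, $h^0 = 0$ and $\operatorname{Ric} = 0$, we have $W[0, \delta] = 0$ and hence $I^i[0, \delta] = 0$. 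The $C^1$-regularity of $I^i$, combined with the bound from Corollary~\ref{prescirbedcurve3} for $u_p$ and with (\ref{metricestimate}) for $\tilde g_p - \delta$, then yields $\|I^i\|_{C^{0, \gamma/4}_T} \leq C(T)(\|w_0\|_{C^{4, \gamma}} + |\lambda|)$, from which the claimed estimate follows (with an extra factor of $|\lambda|$ to spare).

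The only real subtlety is the structural observation that $D_1 \mathcal T_p[0, 0, 0] = \partial_t$: the coupling between the graph function equation and the barycenter ODE is mediated by $I^i$ with the small prefactor $\lambda$, which switches the coupling off at the base point and reduces the linearisation to a trivially invertible first-order operator. The remainder of the proof is routine bookkeeping.
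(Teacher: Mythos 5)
Your proof is correct and follows essentially the same route as the paper: apply the implicit function theorem at $(0,0,0)$, observe that $\xi_{p,\vec 0}\equiv p$ kills the ODE part and the prefactor $\lambda$ kills the $I^i$ part at the base point, compute $D_1\mathcal T_p[0,0,0]\vec\alpha=\dot{\vec\alpha}$ on $C^{1,\gamma/4}_0$, and invert by integration from $0$. You additionally spell out the quantitative estimate (the paper just cites the IFT for it), and your chain of bounds via $I^i[0,\delta]=0$, Corollary~\ref{prescirbedcurve3}, and (\ref{metricestimate}) is a clean and valid way to obtain it.
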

\begin{proof}
Putting $l=8+r$ and $n=13+2r$ the operator $\mathcal T_p$ is of class $C^r$. To see this we note that $I^i$ and $u_p$ are of class $C^{r}$. We have already pointed out that $\vec\beta\mapsto\xi_{p,\vec\beta}$ is of class $C^{n-4}\subset C^{l-8}$. Finally, we have the following two remarks:
\begin{enumerate}
    \item $C^{1,\frac\gamma4}([0,T],\R^3)\ni\xi\mapsto\dot\xi\in C^{0,\frac\gamma4}([0,T],\R^3)$ is $C^\infty$ as it is a bounded linear map.
    \item As the vector field $b_i$ are $C^{n-1}$ we have that $C^{1,\frac\gamma4}([0,T],\R^3)\ni\xi\mapsto b_i\circ\xi\in C^{0,\frac\gamma4}([0,T],\R^3)$ is of class $C^{n-3}\subset C^r$.
\end{enumerate}\

Next we observe $\mathcal T_p[\vec\beta,0,0]=\left(\langle b_i\circ\xi_{p,\vec\beta}, \dot\xi_{p,\vec\beta}\rangle\right)_{i=1,2}$. In particular $\mathcal T_p[0,0,0]=0$. Thus $(0,0,0)$ is a zero of $\mathcal T_p$. It remains to study the Frechet differential 
$$D_1\mathcal T_p[0,0,0]\vec\beta=\frac d{d\epsilon}\bigg|_{\epsilon=0}\mathcal T_p[\epsilon\vec\beta,0,0]=\frac d{d\epsilon}\bigg|_{\epsilon=0}\left(\langle b_i\circ\xi_{p,\epsilon\vec\beta}, \dot\xi_{\epsilon\vec\beta}\rangle\right)_{i=1,2}$$
Clearly $\xi_{p,\vec 0}\equiv p$. Hence $\dot\xi_{p,\vec 0}\equiv 0$ and
\begin{align*} 
D_1 \mathcal T_p[0,0,0]\vec\beta&=\left(\langle b_i(p),\frac{\partial}{\partial\epsilon}\bigg|_{\epsilon=0}\dot\xi_{\epsilon\vec\beta}\rangle\right)_{i=1,2}\\
&=\left(\langle b_i(p),\frac{\partial}{\partial\epsilon}\bigg|_{\epsilon=0}\left[\epsilon\dot\beta^j(t)b_j(p)+\epsilon D_2\varphi[p,\epsilon\vec\beta(t)]\dot{\vec\beta}(t)N^S(p)\right]
\rangle\right)_{i=1,2}\\
&=\dot{\vec\beta}.
\end{align*}
The last step used the fact that $(b_1(p), b_2(p))$ is an orthonormal basis of $T_pS$. By the definition of the space $C^{1,\frac\gamma4}_0([0,T], \R^2)$ we see that $D_1\mathcal T_p[0,0,0]$ is an isomorphism into $C^{0,\frac\gamma4}([0,T],\R^2)$ with bounded inverse 
$$\left(D_1\mathcal T_p[0,0,0]\right)^{-1}\vec v=\left(t\mapsto \int_0^ t\vec v(s)ds\right).$$
The lemma follows from the implicit function theorem.
\end{proof}
\begin{korollar}[Shorttime existence of the flow]\label{shorttime2}\hfill \\
Let $T>1$, $p\in S$, $r\geq 2$ and suppose that $\Omega\in C^{13+2r}$. There exist, $\theta(T)$, $\theta'(T)$, $\rho(T)>0$, and $\lambda_0(T)>0$ such that for each admissible initial value $(u_0,p)\in  C^{4,\gamma}(\Sp^2_+)\times S$ satisfying $\|u_0\|_{C^{4,\gamma}}\leq \theta(T)$ and $|\lambda|\leq\lambda_0(T)$ there exists exactly one pair $(u,\vec\beta)\in C^{4,1,\gamma}([0,T]\times \Sp^2_+)\times  C^{1,\frac\gamma4}_0([0,T],\R^2) $ that satisfies system (\ref{system}) on $[0,T]$ as well as the estimates $\|u\|_{C^{4,1,\gamma}_T}\leq \theta'(T)$ and $\|\vec\beta\|_{C^{1,\frac\gamma4}_T([0,T],\R^2)}\leq \rho(T)$.\\

The graph function $u$ satisfies the estimates
$$\|u\|_{C^{4,1\gamma}_T}\leq C(T)\left[\|u_0\|_{C^{4,\gamma}}+|\lambda| \right]\hspace{.3cm}\text{and}\hspace{.3cm}\|u(T)\|_{C^{4,\gamma}}\leq C_0e^{-6T}\|u_0\|_{C^{4,\gamma}}+C_1(T)|\lambda|.$$
The vector field $\vec\beta$ satisfies the estimate
$\|\vec\beta\|_{C^{1,\frac\gamma4}_T}\leq C(T)(|\lambda|+\|u_0\|_{C^{4,\gamma}})$.
\end{korollar}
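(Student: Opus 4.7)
The plan is to combine the two preceding results: Lemma~\ref{shorttime} produces the barycenter curve while Corollary~\ref{prescirbedcurve3} produces the graph function along a given curve, and the two pieces have been set up so that their concatenation recovers the full system~(\ref{system}). The first step is to reduce the admissible initial value $(u_0,p)$ to an element of $X_0$. Using Lemma~\ref{decomposition}, I write $u_0 = w_0 + \tilde w_0$ with $w_0 \in X_0$, $\tilde w_0 \in Y_0$ and $\|w_0\|_{C^{4,\gamma}} \le C\|u_0\|_{C^{4,\gamma}}$. Since $(u_0,p)$ is admissible, $B_0[u_0,\tilde g^{p,\lambda}] = 0$, and the uniqueness assertion of Lemma~\ref{boundaryvalueslem} forces $\tilde w_0 = \psi_0[w_0,\tilde g^{p,\lambda}]$; thus $w_0$ alone parameterises $u_0$.

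I then feed $w_0$ into Lemma~\ref{shorttime} to obtain $\vec\beta := \vec\beta_p[\lambda,w_0] \in C^{1,\gamma/4}_0([0,T],\R^2)$, and subsequently into Corollary~\ref{prescirbedcurve3} to obtain $u := u_p[w_0,\lambda,\vec\beta]$. By construction $u(0) = w_0 + \psi_0[w_0,\tilde g^{p,\lambda}] = u_0$, and $u$ solves the prescribed-barycenter system~(\ref{prescxisystem}) along the curve $\xi := \xi_{p,\vec\beta}$ with metric $\tilde g := \tilde g_p[\lambda,\vec\beta]$. To recover the full system~(\ref{system}), note that the first equation of (\ref{prescxisystem}) coincides with the first equation of (\ref{system}) once the explicit formula~(\ref{formula}) for $\tilde J_i$ is inserted, while the barycenter equation (the second line of (\ref{system})) is exactly the vanishing of $\mathcal T_p[\vec\beta,w_0,\lambda]$, which is the defining property of $\vec\beta_p[\lambda,w_0]$. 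The boundary condition is built into $u_p$ via Lemma~\ref{boundaryvalueslem}, and the initial conditions $u(0) = u_0$, $\xi(0) = p$ hold because $\vec\beta(0) = 0$ is enforced by the $C^{1,\gamma/4}_0$-constraint.

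For uniqueness, suppose $(\hat u, \vec{\hat\beta})$ is another solution obeying the smallness bounds. Decomposing $\hat u(0) = u_0$ via Lemma~\ref{decomposition} recovers the same $w_0$. Since $\hat u$ then solves the prescribed-curve problem along $\xi_{p,\vec{\hat\beta}}$ with this initial datum, the uniqueness part of Corollary~\ref{prescirbedcurve3} gives $\hat u = u_p[w_0,\lambda,\vec{\hat\beta}]$. Plugging this back into the barycenter ODE shows $\vec{\hat\beta}$ is a zero of $\mathcal T_p[\,\cdot\,,w_0,\lambda]$ near $0$, and uniqueness in Lemma~\ref{shorttime} yields $\vec{\hat\beta} = \vec\beta$. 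The claimed estimates are an immediate concatenation: Corollary~\ref{prescirbedcurve3} supplies the two bounds on $u$ (with $\|u_0\|_{C^{4,\gamma}}$ controlling the initial data there), while Lemma~\ref{shorttime} together with $\|w_0\|_{C^{4,\gamma}} \le C\|u_0\|_{C^{4,\gamma}}$ supplies the bound on $\vec\beta$.

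The only real obstacle is coordinating the neighbourhood parameters produced by the two results so that the output of each one lies in the admissible input region of the other: Corollary~\ref{prescirbedcurve3} requires $\vec\beta$ to satisfy $\|\vec\beta\|_{C^{1,\gamma/4}_T} \le \rho(T)$, while Lemma~\ref{shorttime} requires $w_0$ to be small enough that the resulting $u_p$ stays in the neighbourhood where the operator $I^i$ is evaluated. Both constraints are linear in $(\|u_0\|_{C^{4,\gamma}},|\lambda|)$ by the estimates already established, so they are handled by shrinking $\theta(T)$ and $\lambda_0(T)$ once at the end.
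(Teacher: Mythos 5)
Your proposal is correct and follows the same approach as the paper: decompose $u_0 = w_0 + \tilde w_0$ via Lemma~\ref{decomposition}, identify $\tilde w_0 = \psi_0[w_0,\tilde g^{p,\lambda}]$ by admissibility and Lemma~\ref{boundaryvalueslem}, then feed $w_0$ into Lemma~\ref{shorttime} and Corollary~\ref{prescirbedcurve3}. The paper's own proof is terser and leaves the uniqueness argument and the coordination of neighbourhoods implicit, both of which you spell out correctly.
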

\begin{proof}
Let $(u_0,p)$ be admissible. We may decompose $u_0=w_0+\tilde w_0$. Lemma \ref{decomposition} implies $\|w_0\|_{C^{4,\gamma}}+\|\tilde w_0\|_{C^{4,\gamma}}\leq C\|u_0\|_{C^{4,\gamma}}$. Hence, for small enough $\|u_0\|_{C^{4,\gamma}}$ we can ensure $\|w_0\|_{C^{4,\gamma}}$ and $\|\tilde w_0\|_{C^{4,\gamma}}$ to be small enough to apply Lemma \ref{boundaryvalueslem} which implies $\tilde w_0=\psi_0[w_0,\tilde g^{p,\lambda}]$.\\
Choosing $\theta(T)$ small enough we can ensure $\|w_0\|_{C^{4,\gamma}}$ to be small enough to apply Lemma \ref{shorttime} and derive the existence of $\vec\beta$ satisfying the estimate claimed in the Corollary.\\
For small enough $\lambda$ and $\|u_0\|_{C^{4,\gamma}}$ we may apply Corollary \ref{prescirbedcurve3} to derive the existence of $u$ as well as the claimed estimates.
\end{proof}

Using the Estimates in Corollary \ref{shorttime2}, it is now easy to prove the long-time existence of the flow. 

\begin{korollar}[Long-time existence of the flow]\label{longimeexistence}\hfill \\
There exist $\theta_0>0$ and $\lambda_0>0$ such that for all $\lambda\in [0,\lambda_0]$, $p\in S$ and admissible initial values $\|u_0\|_{C^{4,\gamma}}\leq \theta_0$ there exist:
\begin{enumerate}
    \item A curve $\xi\in C^{1,\frac\gamma4}([0,\infty), S)$ satisfying $\xi(0)=p$,
    \item two $C^{1,\frac\gamma4}$ curves $b_1, b_2:[0,\infty)\rightarrow TS$ such that $(b_1(t), b_2(t))$ is an orthonormal basis of $T_{\xi(t)} S$ for all $t\geq 0$,
    \item a graph function $u\in C^{4,1,\gamma}([0,\infty)\times \Sp^2_+)$ satisfying $u(0)=u_0$
\end{enumerate} 
such that system (\ref{system}) is satisfied by $(u,\xi)$ on $[0,\infty)$. The function $u$ satisfies the decay estimate 
$$\|u\|_{C^{4,1,\gamma}([\tau,\infty)\times \Sp^2_+)}\leq C(\Sp^2_+,\gamma)\bigg(e^{-\alpha\tau}\|u_0\|_{C^{4,\gamma}}+\lambda\bigg)$$
for constants $C>0$ and $\alpha>0$ independent of all choices involved and the barycenter curve $\xi$ satisfies $\|\dot\xi\|_{C^{0,\frac\gamma4}([0,\infty),\R^3)}\leq C(\lambda+\|u_0\|_{C^{4,\gamma}}).$
\end{korollar}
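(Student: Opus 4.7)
The plan is to iterate the short-time existence result Corollary \ref{shorttime2} on intervals of fixed length $T$, exploiting the contractive factor $C_0e^{-6T}$ that appears in the decay estimate for $\|u(T)\|_{C^{4,\gamma}}$. First I fix $T>1$ large enough so that $q:=C_0e^{-6T}\leq\tfrac12$, and then choose $\theta_0,\lambda_0>0$ so small that
\[
q\,\theta_0+C_1(T)\lambda_0\leq\theta_0,\qquad \theta_0\leq\theta(T),\qquad\lambda_0\leq\lambda_0(T),
\]
where $\theta(T)$ and $\lambda_0(T)$ are the thresholds in Corollary \ref{shorttime2}. Once these constants are in place, the iteration preserves the smallness hypothesis needed to restart the flow.

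The inductive construction runs as follows. Assume the flow has been constructed on $[0,kT]$, producing a pair $(u,\xi)$ satisfying (\ref{system}) with $\|u(kT)\|_{C^{4,\gamma}}\leq\theta_0$. Set $p_k:=\xi(kT)\in S$, let $u_k:=u(kT)$, and pick an orthonormal frame $(b_1^{(k)},b_2^{(k)})$ of $T_{p_k}S$ in a neighbourhood of $p_k$. The pair $(u_k,p_k)$ is admissible in the sense of (A1)-(A2): the boundary condition $B_0[u_k,\tilde g^{p_k,\lambda}]=0$ holds because $B_T[u,\tilde g]=0$ is built into (\ref{system}); the area condition $A[u_k,\tilde g^{p_k,\lambda}]=2\pi$ follows from the fact that the flow (\ref{orgprob}) is area preserving; and $C[u_k,\tilde g^{p_k,\lambda}]=0$ by the very definition of $p_k=C[\phi(kT)]$ combined with the barycenter identity (\ref{barycenteridentity}). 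Applying Corollary \ref{shorttime2} to the base point $p_k$ with initial value $u_k$ produces the extension of $(u,\xi)$ to $[kT,(k+1)T]$, and by uniqueness this extension matches the previously constructed solution at $t=kT$. The decay estimate in Corollary \ref{shorttime2} yields
\[
\|u((k+1)T)\|_{C^{4,\gamma}}\leq q\,\|u_k\|_{C^{4,\gamma}}+C_1(T)\lambda\leq q\,\theta_0+C_1(T)\lambda_0\leq\theta_0,
\]
so the inductive hypothesis propagates and the iteration continues indefinitely.

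Iterating the recursion gives the geometric bound
\[
\|u(kT)\|_{C^{4,\gamma}}\leq q^{k}\|u_0\|_{C^{4,\gamma}}+\frac{C_1(T)}{1-q}\lambda,
\]
and the local estimate $\|u\|_{C^{4,1,\gamma}([kT,(k+1)T]\times\Sp^2_+)}\leq C(T)\bigl[\|u(kT)\|_{C^{4,\gamma}}+\lambda\bigr]$ then delivers the parabolic Hölder control on each slab. Setting $\alpha:=-\log(q)/T>0$, for $\tau\in[kT,(k+1)T]$ we have $q^{k}\leq e^{-\alpha\tau}\cdot e^{\alpha T}$, which converts the slab-by-slab estimates into the uniform bound
\[
\|u\|_{C^{4,1,\gamma}([\tau,\infty)\times\Sp^2_+)}\leq C\bigl(e^{-\alpha\tau}\|u_0\|_{C^{4,\gamma}}+\lambda\bigr).
\]
For the barycenter curve, on each slab $[kT,(k+1)T]$ the displacement field $\vec\beta_k$ recovered from Corollary \ref{shorttime2} satisfies $\|\vec\beta_k\|_{C^{1,\gamma/4}}\leq C(T)(\lambda+\|u(kT)\|_{C^{4,\gamma}})\leq C(\lambda+\|u_0\|_{C^{4,\gamma}})$, and since $\dot\xi$ on that slab is controlled by $\|\vec\beta_k\|_{C^{1,\gamma/4}}$ times a geometric factor bounded uniformly in $p_k\in S$, the claimed estimate $\|\dot\xi\|_{C^{0,\gamma/4}([0,\infty),\R^3)}\leq C(\lambda+\|u_0\|_{C^{4,\gamma}})$ follows.

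The main technical point to verify is that the constants $C_0$, $C_1(T)$, $\theta(T)$, $\rho(T)$, $\lambda_0(T)$ produced by Corollary \ref{shorttime2} can be chosen uniformly in the reference point $p\in S$; this is a standard compactness argument, because the coefficients of the elliptic and parabolic problems obtained from the local charts $F[p,\cdot]$ and the frames $(b_1,b_2)$ depend continuously on $p$, and $S$ is a compact $C^{13+2r}$ hypersurface. A minor additional bookkeeping issue is the matching of the orthonormal frames across successive slabs: the choice of $(b_1^{(k)},b_2^{(k)})$ near $p_k$ only affects $u$ by a rotation of the parameter on $\Sp^2_+$ and hence does not change its $C^{4,\gamma}$-norm, so there is no obstruction to pasting. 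The regularity $\xi\in C^{1,\gamma/4}([0,\infty),S)$ and the continuity of $b_1,b_2$ along $\xi$ follow immediately from the uniform $C^{1,\gamma/4}$-bounds on each slab together with continuity at the joints, the latter being a consequence of the matching of initial data at each restart.
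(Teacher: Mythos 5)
Your proof takes the same route as the paper: fix $T>1$ so that the contractive factor $q=C_0e^{-6T}$ in Corollary~\ref{shorttime2} is $\le\tfrac12$, choose $\theta_0,\lambda_0$ small enough that the iteration preserves the smallness hypothesis, iterate slab-by-slab, and convert the geometric decay $q^k$ into the exponential $e^{-\alpha\tau}$ with $\alpha=\ln 2/T$. Your verification of admissibility of $(u_k,p_k)$ at each restart and your remark about uniformity of constants in $p\in S$ are useful additions, the latter being already built into the set-up (the bounds on $\varphi$ and $\tilde g^{p,\lambda}$ in Subsection~\ref{terminilogy} are explicitly uniform in $p$).

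The one place where your argument is noticeably looser than the paper's is the treatment of the orthonormal frames at the slab joints. The paper's proof explicitly chooses the $(n+1)$-th frame to agree with the $n$-th frame on a small neighbourhood of $p_{n+1}$, and explicitly credits this choice with the $C^{4,1}$ regularity of $u$ and the $C^1$ regularity of $\xi$ across the joints. Your remark that changing the frame ``only affects $u$ by a rotation of the parameter on $\Sp^2_+$ and hence does not change its $C^{4,\gamma}$-norm, so there is no obstruction to pasting'' is true but does not by itself justify the regularity: a rotation of the $\Sp^2_+$ parameter at a joint time would produce a \emph{discontinuity} in the glued $u$, because $\tilde g^{p,\lambda}$ (and hence the graph-function representation of $\phi$) is chart-dependent. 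You are implicitly appealing to the freedom to pick the frames consistently, but you should say so explicitly — the correct statement is that one \emph{chooses} the $(k+1)$-th frame to extend the $k$-th near $p_{k+1}$, and it is this choice (not mere norm invariance) that yields $C^{4,1,\gamma}$ regularity of $u$ and $C^{1,\gamma/4}$ regularity of $\xi$ after gluing. A similar comment applies to your phrase ``continuity at the joints \dots a consequence of the matching of initial data at each restart'': matching initial data only yields $C^0$ matching; the matching of time derivatives at the joints again relies on the frames (and hence the evolution equation) being the same on both sides.
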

\begin{proof}
Keeping the notation from Corollary \ref{shorttime2}, choose a time $T>1$ so large that  $C_0e^{-6T}<\frac12$, put $\theta_0:=\theta(T)$, fix $\lambda_0'(T)>0$ such that $\lambda_0'(T)C_1(T)<\frac12\theta_0$ and put $\lambda_0:=\min(\lambda_0(T),\lambda_0'(T))$.\\

Let $p_1:=p$ and choose a frame $(b^{(1)}_1, b^{(1)}_2)$ around $p_1$. Corollary \ref{shorttime2} implies the existence of a curve $\xi_1\in C^{1,\frac\gamma4}([0,T], S)$ with $\xi_1(0)=p_1$ and a function $u_1\in C^{4,1,\gamma}([0,T]\times \Sp^2_+)$ solving system (\ref{system}) on the time interval $[0,T]$ and satisfying the estimates
$$\|u_1(T)\|_{C^{4,\gamma}}\leq \frac12\|u_0\|_{C^{4,\gamma}}+C_1(T)\lambda<\theta_0\hspace{.5cm}\text{and}\hspace{.5cm}\|\dot\xi\|_{C^{0,\frac\gamma4}_T}\leq C(T)(\lambda+\|u_0\|_{C^{4,\gamma}}).$$

Let $p_2:=\xi_1(T)$ and choose a frame $(b^{(2)}_1, b^{(2)}_2)$  around $p_2$ that agrees with $(b^{(1)}_1, b^{(1)}_2)$ on a small neighbourhood of $p_2$. We apply \ref{shorttime2} with the same choice for $\lambda$, the point $p_2$ and the initial value $u_1(T)$ to obtain a a curve $\xi_2\in C^{1,\frac\gamma4}([0,T], S)$ and a function $u_2\in C^{4,1,\gamma}([0,T]\times \Sp^2_+)$ solving system (\ref{system}) on the time interval $[0,T]$ and satisfying the estimates
$$\|u_2(T)\|_{C^{4,\gamma}}\leq \frac12\|u_1(T)\|_{C^{4,\gamma}}+C_1(T)\lambda\hspace{.5cm}\text{and}\hspace{.5cm}\|\dot\xi_2\|_{C^{0,\frac\gamma4}_T}\leq C(T)(\lambda+\|u_1(T)\|_{C^{4,\gamma}}).$$ 
Inserting the estimate for $\|u_1(T)\|_{C^{4,\gamma}}$ and using the definition of $\lambda_0$ yields
\begin{align*}
    &\|u_2(T)\|_{C^{4,\gamma}}\leq \frac14\|u_0\|_{C^{4,\gamma}}+\left(1+\frac12\right)C_1(T)\lambda <\frac14\theta_0+\frac32C_1(T)\lambda<\theta_0,\\
    &\|\dot\xi_2\|_{C^{0,\frac\gamma4}_T}\leq C(T)\left(\lambda+\frac12\|u_0\|_{C^{4,\gamma}}+C_1(T)\lambda\right)=\frac{C(T)}2\|u_0\|_{C^{4,\gamma}}+C(T)(1+C(T))\lambda.
\end{align*}

Inductively we now get the existence of points $p_n\in S$ curves $\xi_n\in C^{1,\frac\gamma4}([0,T], S)$ and functions $u_n\in C^{4,1,\gamma}([0,T]\times \Sp^2_+)$ solving system (\ref{system}) on the time interval $[0,T]$ with initial values $\xi_n(0)=p_n=:\xi_{n-1}(T)$ and $u_n(0)=u_{n-1}(T)$ that satisfy the estimate
\begin{equation}\label{u0estimate}
    \|u_n(T)\|_{C^{4,\gamma}}    \leq \frac1{2^n}\|u_0\|_{C^{4,\gamma}}+C_1(T)\lambda\sum_{k=0}^{n-1}\frac1{2^k}<\theta_0.
\end{equation}
In the last step we have used the choice $C_1(T)\lambda\leq\frac12\theta_0$. Applying Corollary \ref{shorttime2} as well as Estimate (\ref{u0estimate}) gives
\begin{align}
    \|u_n\|_{C^{4,1,\gamma}_T}&\leq C(T)\left[\|u_n(0)\|_{C^{4,\gamma}}+\lambda\right]\nonumber\\
    &=C(T)\left[\|u_{n-1}(T)\|_{C^{4,\gamma}}+\lambda\right]\nonumber\\
    &\overset{(\ref{u0estimate})}\leq C(T)\left[\frac1{2^{n-1}}\|u_0\|_{C^{4,\gamma}}+C_1(T)\lambda\sum_{j=0}^{n-2}\frac1{2^k}\right]\label{decay}\\
    &\leq  C(T)\left[\|u_0\|_{C^{4,\gamma}}+\lambda\right]\nonumber.
\end{align}
Finally, we get the following estimate for the barycenter curves:
\begin{align*} 
\|\dot\xi_n\|_{C^{0,\frac\gamma4}_T}\leq & C(T)(\lambda+\|u_{n-1}(T)\|_{C^{4,\gamma}}) \\
\leq &C(T)\left(\lambda+\frac1{2^ {n-1}}\|u_0\|_{C^{4,\gamma}}+C_1(T)\lambda\sum_{j=0}^ {n-2}\frac1{2^ j}\right)\\
\leq &  C(T)(\lambda+\|u_0\|_{C^{4,\gamma}})
\end{align*}

We can now \emph{`glue'} the solution on $[0,\infty)$ together by defining functions $\xi:[0,\infty)\rightarrow S$ and $u:[0,\infty)\times \Sp^2_+\rightarrow\R$ as follows:
\begin{equation}\label{uefinition}
\xi(t):=\left\{\begin{aligned}
\xi_1(t) &\hspace{.2cm} \text{if $t\in[0,T)$},\\
\xi_2(t) &\hspace{.2cm} \text{if $t\in[T,2T)$},\\
\xi_3(t) &\hspace{.2cm} \text{if $t\in[2T,3T)$},\\
\vdots & \hspace{.4cm}\vdots
\end{aligned}\right.
\hspace{.75cm}\text{and}\hspace{.75cm}
u(t,\omega):=\left\{\begin{aligned}
u_1(t,\omega) &\hspace{.2cm} \text{if $t\in[0,T)$},\\
u_2(t,\omega) &\hspace{.2cm} \text{if $t\in[T,2T)$},\\
u_3(t,\omega) &\hspace{.2cm} \text{if $t\in[2T,3T)$},\\
\vdots & \hspace{.4cm}\vdots
\end{aligned}\right.
\end{equation}
The fact that $u\in C^{4,1}([0,\infty)\times \Sp^2_+)$ and $\xi\in C^1([0,\infty),S)$ is a direct consequence from us choosing the $(n+1)$-th frame to agree with the $n$-th frame in a small neighbourhood of $p_{n+1}$. The fact that $u\in C^{4,1,\gamma}([0,\infty)\times \Sp^2_+)$ and $\xi\in C^{1,\frac\gamma4}([0,\infty),S)$ is easily derived by distinguishes the cases $|t_1-t_2|\geq 1$ and $|t_1-t_2|\leq 1$. This gives
\begin{align*} 
\|\dot\xi\|_{C^{0,\frac\gamma4}([0,\infty), \R^3)}&\leq C(T)(\|u_0\|_{C^{4,\gamma}}+\lambda)
\hspace{.5cm}\text{and}\hspace{.5cm}\|u\|_{C^{4,1,\gamma}([0,\infty)\times \Sp^2_+)}\leq C(T)\left(\|u_0\|_{C^{4,\gamma}}+\lambda\right).
\end{align*}
Finally, we prove the decay estimate claimed in the theorem. Let $m\geq n\in\N$ and use Estimate (\ref{decay}) to get 
$$\|u_m\|_{C^{4,1,\gamma}_T}\leq C(T)\left(\frac 2{2^m}\|u_0\|_{C^{4,\gamma}}+\lambda\right)\leq C(T)\left(\frac 2{2^n}\|u_0\|_{C^{4,\gamma}}+\lambda\right).$$
As this is true for all $m\geq n$ the definition of $u$ in Equation (\ref{uefinition}) implies
$$\|u\|_{C^{4,1,\gamma}([nT,\infty)\times \Sp^2_+)}\leq C2^{-n}\|u_0\|_{C^{4,\gamma}}+C\lambda.$$
Now let $\tau\geq 0$ and choose the unique $n\in\N_0$ such that $nT\leq \tau<nT+T$. Then $2^{-n}\leq 2^{1-T^{-1}\tau}$. Let $\alpha:=\frac{\ln2}T\in (0,1)$. Recall that we have chosen a fixed $T>1$ dependent only of $\Sp^2_+$ and the Hölder exponent $\gamma\in(0,1)$. Thus $\alpha=\alpha(\Sp^2_+,\gamma)$ and we derive
$$\|u\|_{C^{4,1,\gamma}([\tau,\infty)\times \Sp^2_+)}\leq\|u\|_{C^{4,1,\gamma}([nT,\infty)\times \Sp^2_+)}\leq Ce^{-\alpha\tau}\|u_0\|_{C^{4,\gamma}}+C\lambda.$$
\end{proof}
\begin{korollar}[Subconvergence]\label{subconvergence}\ \\
Let $\Omega\in C^{17}$, $0\leq\beta<\gamma<1$, $\lambda_0$, $\theta_0$, $\theta_1$ be as in Theorem \ref{theorem1}, $\lambda\leq\lambda_0$, $\phi_0\in S^{4,\gamma}(\lambda,\theta_0)$ and denote the solution to the area preserving Willmore flow with initial value $\phi_0$ by $\phi(t)$. Then any sequence $t_n\uparrow\infty$ contains a subsequence $t_{n_k}$ such that $\phi(t_{n_k})\rightarrow \phi_\infty$ in $C^{4,\beta}(\Sp^2_+)$ where $\phi_\infty\in S^{4,\beta}(\lambda, \theta_1)$ is a critical point of the elliptic problem (\ref{elliptic}). 
\end{korollar}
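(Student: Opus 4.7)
\ \\
The plan combines three ingredients: the uniform parabolic Hölder bound $\|u\|_{C^{4,1,\gamma}([0,\infty)\times\Sp^2_+)}\le C$ furnished by Corollary \ref{longimeexistence} (yielding Arzelà–Ascoli compactness), the energy dissipation identity for $\mathcal W$ derived in Subsection \ref{flow} (yielding integrability in time of the Willmore residual $P_H^\perp[W[\phi(t)]]$), and a Barbalat-type continuity argument (upgrading this integrability to pointwise decay). I would carry these out as follows.

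Writing $\phi(t)=F^\lambda[\xi(t),f_{u(t)}]$, Corollary \ref{longimeexistence} gives a uniform $C^{4,\gamma}$ bound on $u(t)$, while $\xi(t)\in S$ with $S$ compact. For a given sequence $t_n\to\infty$, the family $\{u(t_n)\}$ is therefore precompact in $C^{4,\beta}(\Sp^2_+)$ for every $\beta<\gamma$, and after a diagonal extraction I obtain a subsequence along which $u(t_{n_k})\to u_\infty$ in $C^{4,\beta}$ and $\xi(t_{n_k})\to p_\infty\in S$. Continuity of $F^\lambda$ in both arguments produces the $C^{4,\beta}$ limit $\phi_\infty:=F^\lambda[p_\infty,f_{u_\infty}]$. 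The constraints $A[\phi(t)]=2\pi\lambda^2$ and $C[\phi(t)]=\xi(t)$ pass to the limit by continuity of $A$ and $C$, while the bound $\|u_\infty\|_{C^{4,\beta}}\le\theta_1$ is inherited from $\phi(t_{n_k})\in\mathcal S^{4,\gamma}(\lambda,\theta_1)$, so $\phi_\infty\in\mathcal S^{4,\beta}(\lambda,\theta_1)$.

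To identify $\phi_\infty$ as a critical point, I integrate the identity derived in Subsection \ref{flow} to obtain
\begin{equation*}
\int_0^\infty\|P_H^\perp[W[\phi(t)]]\|_{L^2(\phi(t)^\ast\delta)}^2\,dt=\mathcal W[\phi_0]-\lim_{t\to\infty}\mathcal W[\phi(t)]<\infty,
\end{equation*}
using that $\mathcal W\ge 0$ and decreases monotonically along the flow. The uniform $C^{4,1,\gamma}$ bound on $u$ makes $u$ Lipschitz in time with values in $C^0(\Sp^2_+)$ and uniformly bounded in $C^{4,\gamma}$ in space; by interpolation this produces uniform continuity in $t$ of $\phi(t)$ in $C^{4,\beta}$, hence of the scalar map $t\mapsto\|P_H^\perp[W[\phi(t)]]\|_{L^2}^2$. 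Barbalat's lemma then forces $\|P_H^\perp[W[\phi(t)]]\|_{L^2}\to 0$ as $t\to\infty$. Along the subsequence $t_{n_k}$, continuous dependence of $W$, $H$ and the projection $P_H^\perp$ on $\phi$ in the $C^{4,\beta}$ topology yields $P_H^\perp[W[\phi(t_{n_k})]]\to P_H^\perp[W[\phi_\infty]]$ in $L^2$, so the limit must vanish. The boundary conditions in \eqref{elliptic} involve at most third derivatives of $\phi$ and are therefore preserved under $C^{4,\beta}$ convergence, so $\phi_\infty$ is a classical solution of \eqref{elliptic}.

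The main obstacle is the intermediate step that promotes the merely integral bound on the Willmore residual to a pointwise-in-time statement along the arbitrarily prescribed sequence $t_n$: this is exactly where the uniform parabolic Hölder estimate of Corollary \ref{longimeexistence} is indispensable, since without it one would only control the Willmore residual along \emph{some} sequence going to infinity rather than along the given one.
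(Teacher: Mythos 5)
Your proposal is correct, but it takes a genuinely different route from the paper's. Both start from the uniform $C^{4,1,\gamma}$ bound of Corollary \ref{longimeexistence} and the energy dissipation identity, and both invoke Ehrling's interpolation inequality somewhere; the divergence is in how the integrability of the Willmore residual is upgraded. The paper argues by contradiction: if $P_H^\perp(W(\phi_\infty))\neq 0$, the flow cannot stay in a $2\delta_0$-ball around $\phi_\infty$ for all large times, yet $\phi(t_n)\to\phi_\infty$ forces it to re-enter and leave that ball infinitely often; Ehrling then gives a lower bound $\tau_n-\sigma_n\geq\kappa_0>0$ on each crossing time, so each excursion dissipates at least $\rho_0^2\kappa_0$ of Willmore energy, contradicting the finiteness of $\mathcal W[\phi_0]$. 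You instead use Ehrling to establish uniform continuity of $t\mapsto\|P_H^\perp W[\phi(t)]\|_{L^2}^2$, and then Barbalat's lemma converts the $L^1_t$-bound on this quantity directly into the pointwise decay $\|P_H^\perp W[\phi(t)]\|_{L^2}\to 0$. Your approach buys a cleaner and actually stronger conclusion (full-time decay of the residual, not merely vanishing along the selected subsequence), at the cost of invoking Barbalat as a black box; the paper's approach is more self-contained and is the classical oscillation argument used throughout the geometric flow literature, requiring only Ehrling and the monotonicity of $\mathcal W$. Both hinge crucially on the uniform parabolic Hölder bound; without it neither the paper's crossing-time estimate nor your uniform continuity would hold. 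One minor point worth making explicit in your write-up: the equivalence constants between $L^2(\phi(t)^*\delta)$ and a fixed $L^2(\Sp^2_+)$ are uniform in $t$ because the immersions remain in a small $C^{4,\gamma}$ neighbourhood of a scaled round half-sphere, which justifies treating the dissipated quantity as a uniformly continuous real-valued function of $t$.
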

\begin{proof}
By Corollary \ref{longimeexistence} $\phi(t)$ is $C^{4,\gamma}$-bounded uniformly over time. Hence any sequence $t_n\uparrow\infty$ must contain a subsequence $t_{n_k}$ so that $\phi(t_{n_k})\rightarrow\phi_\infty$ in $C^{4,\beta}$ for some $\phi_\infty\in C^{4,\beta}(\Sp^2_+)$.\\

Now let $t_n\uparrow\infty$ such that $\phi(t_n)\rightarrow \phi_\infty$ in $C^{4,\beta}$. We prove $P_H^\perp(W(\phi_\infty))=0$ by contradiction. Else we may assume that for $\|\phi-\phi_\infty\|_{C^{4,\beta}}\leq 2\delta_0$ we have $\rho_0\leq \|P_H^\perp (W(\phi))\|_{C^{0,\beta}}\leq 2\rho_0$ for some $\delta_0>0$ and $\rho_0>0$. Clearly $\phi(t)$ cannot be $2\delta_0$-close to $\phi_\infty$ for all large times as this would imply 
$$\mathcal W(\phi(t))=\mathcal W(\phi(t_0))+\int_{t_0}^t-|P_H^\perp (W(\phi(s))|^2ds\leq \mathcal W(\phi(t_0))-\rho_0^2 (t-t_0)\rightarrow-\infty$$
for some fixed $t_0$ and $t\rightarrow\infty$. As $\phi(t_n)\rightarrow\phi_\infty$ we may however assume that $\|\phi(t_n)-\phi_\infty\|_{C^{4,\beta}}\leq\delta_0$ and choose sequences $\sigma_n$ and $\tau_n$ satisfying $\sigma_n<\tau_n<\sigma_{n+1}$ such that $\|\phi(\sigma_n)-\phi_\infty\|_{C^{4,\beta}}=\delta_0$, $\|\phi(\tau_n)-\phi_\infty\|_{C^{4,\beta}}=2\delta_0$ and $\|\phi(t)-\phi_\infty\|_{C^{4,\beta}}\leq 2\delta_0$ for all $t\in[\sigma_n,\tau_n]$. Then 
$$\|\phi(\tau_n)-\phi(\sigma_n)\|_{C^{0,\beta}}\leq \int_{\sigma_n}^{\tau_n}\|\dot\phi(s)\|_{C^{0,\beta}}ds\leq \int_{\sigma_n}^{\tau_n}\| P_H^\perp W(\phi(s))\|_{C^{0,\beta}}ds\leq 2\rho_0(\tau_n-\sigma_n).$$
Note that by Theorem \ref{theorem1} we may bound $\|\phi(t)\|_{C^{4,\gamma}}\leq K$ independent of $t$. For any $\mu>0$ we may use Ehrlings Lemma to get
\begin{align*} 
\delta_0\leq & \|\phi(\tau_n)-\phi(\sigma_n)\|_{C^{4,\beta}}\leq \mu\|\phi(\tau_n)-\phi(\sigma_n)\|_{C^{4,\gamma}}+C(\mu)\|\phi(\tau_n)-\phi(\sigma_n)\|_{C^{0,\beta}}\\
\leq & 2K\mu+C(\mu)2\rho_0(\tau_n-\sigma_n).
\end{align*}
Choosing $\mu=\frac{\delta_0}{2(1+K)}$ gives $\tau_n-\sigma_n\geq \kappa_0(\delta_0, K)>0$. 
Since $\mathcal W(\phi(t))$ decreases we deduce
$$\mathcal W(\phi(\sigma_{n+1}))\leq \mathcal W(\phi(\sigma_n))-\int_{\sigma_n}^{\tau_n}|P_H^\perp W(\phi(s))|^2\leq \mathcal W(\phi(t_n))-\rho_0^2\kappa_0.$$
Iterating gives $\mathcal W(\phi(\tau_n))\rightarrow-\infty$ for $n\rightarrow\infty$ which is a contradiction. 
\end{proof}

\section{Proof of Theorem 2}
\setcounter{equation}0
\subsection{Proof of Theorem 2i)}
The key-observation to prove Theorem \ref{theorem2}i) is the following Lemma:

\begin{lemma}\label{parity2}
Let $p_0\in S$, put $q_0:=\frac{\partial \tilde g ^{p_0,\lambda}}{\partial\lambda}\big|_{\lambda=0}$ and let $u_0\in C^{4,\gamma}(\Sp^2_+)$ be even. 
\begin{enumerate}
    \item For small $\epsilon,\mu>0$ the following functions are even: $B_0[\epsilon u_0,\delta+\mu q_0]$, $W[\epsilon u_0,\delta+\mu q_0] $, $H[\epsilon u_0,\delta+\mu q_0]$.
    \item For small $\epsilon,\mu>0$ and $i=1,2$ the function $\nabla C^i[\epsilon u_0,\delta+\mu q_0] $ is odd.
    \item $D_2 C[0,\delta]q_0=0$ and $D_2\psi_0[0,\delta]q_0$ (from Lemma \ref{boundaryvalueslem}) is even.
\end{enumerate}
If $T>1$ and $\alpha\in C^{1,\frac\gamma4}([0,T], S)$ put $q:=\frac{\partial \tilde g ^{\alpha,\lambda}}{\partial\lambda}\big|_{\lambda=0}$. Then $D_2\psi_T[0,\delta]q$ is even. 
\end{lemma}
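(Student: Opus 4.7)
The plan is to derive every part of the lemma from a single ambient reflection symmetry. Let $R:\R^3\to\R^3$ denote the Euclidean reflection $R(x^1,x^2,z):=(-x^1,-x^2,z)$; it preserves $\R^2\times\{0\}$ (which represents $S$ in the chart) and the round half-sphere, and satisfies $f_{v}\circ r=R\circ f_v$ precisely when $v:\Sp^2_+\to\R$ is even. The opening step will be to check from the explicit formula (\ref{metricformula}), using $\varphi[p_0,0]=0$ and $D_2\varphi[p_0,0]=0$, that $q_0$ is linear in $x$ with nonzero entries only in the $(i,3)$ and $(3,i)$ slots for $i=1,2$; combined with $R_*e_i=-e_i$ for $i=1,2$ and $R_*e_3=e_3$, a direct computation then gives $R^*q_0=q_0$. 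The same reasoning, applied pointwise in $t$ to $\alpha(t)\in S$, yields $R^*q(t)=q(t)$ for every $t\in[0,T]$.

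From here parts 1 and 2 follow by symmetry pullback. For part 1, since $f_{\epsilon u_0}$ is $R$-equivariant and $\delta+\mu q_0$ is $R$-invariant, every scalar geometric quantity attached to $f_{\epsilon u_0}$ is automatically $r$-even; this immediately covers $H$ and $W$, and it covers both components of $B_0$ after noting that the auxiliary vector fields $\tilde\nu,\tilde\eta,\tilde\nu_{\R^2}$ are $R$-equivariant. For part 2 I will exploit the uniqueness of the Riemannian barycenter from Appendix \ref{Barycenter}: applying $R$ to an immersion negates the first two components of its barycenter, so $C^i[v\circ r,\delta+\mu q_0]=-C^i[v,\delta+\mu q_0]$ for $i=1,2$ and small even perturbations $v$; differentiating at $v=\epsilon u_0$ in the direction of a normal variation $\psi$ and substituting $\psi\mapsto\psi\circ r$ into the resulting integral identity forces $\nabla C^i\circ r=-\nabla C^i$.

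For part 3 the vanishing $D_2 C[0,\delta]q_0=0$ is immediate: $\delta+tq_0$ is $R$-invariant for small $t$, so the round half-sphere in this metric is $R$-symmetric and its Riemannian barycenter must be the only $R$-fixed point close to $0$ in $\R^2\times\{0\}$, namely $0$; differentiating $C[0,\delta+tq_0]\equiv 0$ at $t=0$ gives the claim. For the evenness of $D_2\psi_0[0,\delta]q_0$ I will differentiate the defining identity $B_0[\psi_0[0,\tilde g_0],\tilde g_0]=0$ in the metric direction at $\tilde g_0=\delta$ to obtain the linear equation $D_1 B_0[0,\delta](D_2\psi_0[0,\delta]q_0) = -D_2 B_0[0,\delta]q_0$ with solution in $Y_0$; by part 1 with $\epsilon=0$ the right-hand side is $r$-even, the operator $D_1 B_0[0,\delta]=(\partial_\eta,-\partial_\eta(\Delta+2))$ from Lemma \ref{boundarylin} is parity-preserving (since $\eta$ is $r$-equivariant and $\Delta$ commutes with the isometry $r$), and $Y_0$ is stable under $r^*$; averaging the unique solution $\varphi$ with its pullback $\varphi\circ r$ and invoking the uniqueness from Lemma \ref{boundaryvalueslem} forces $\varphi$ to be even. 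The final statement is structurally identical, replacing the elliptic problem by its parabolic counterpart from the proof of Lemma \ref{boundaryvalueslem}: at every $t$ the perturbation $q(t)$ is $R$-invariant and the data is $r$-even, so the unique solution in $Y_T$ is even at every time. The only genuinely technical step I anticipate is the opening verification that $R^*q_0=q_0$ from (\ref{metricformula}); the rest is a formal consequence of symmetry combined with the uniqueness guaranteed by Lemma \ref{boundaryvalueslem}.
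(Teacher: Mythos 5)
Your proposal is correct and follows essentially the same route as the paper's proof: both hinge on the explicit verification $R^*q_0=q_0$ via (\ref{metricformula}), the reflection-equivariance of $B_0$, $W$, $H$, $A$, $C^i$, and the uniqueness of the problem characterizing $Y_0$ (resp. $Y_T$) to transfer evenness from the data to $D_2\psi_0[0,\delta]q_0$ (resp. $D_2\psi_T[0,\delta]q$). The only cosmetic difference is that the paper simply cites the equivariance identities from Lemma 10 of \cite{AK} and phrases the vanishing of $D_2 C[0,\delta]q_0$ as the equality $D_2 C[0,\delta]q_0=-D_2 C[0,\delta]q_0$, whereas you re-derive the equivariance and observe $C[0,\delta+tq_0]\equiv 0$ directly — same content, slightly more self-contained presentation.
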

\begin{proof}
Let $h_{ij}$ denote the second fundamental form of $S$ in the chart $f[p_0,\cdot]$. Recalling Equation (\ref{metricformula}) we get
$$q_0=\frac{\partial\tilde g^{p_0,\lambda}}{\partial\lambda}\bigg|_{\lambda=0}=\begin{bmatrix}
   0 & 0&  h_{1a}x^ a\\
   0 & 0&  h_{2a}x^ a\\
   h_{1a}x^ a & h_{2a}x^ a & 0
\end{bmatrix}\hspace{.5cm}\text{and put }\hspace{.5cm}T:=\begin{bmatrix}
    -1 & & \\
       & -1 & \\
       & & 1 \\
\end{bmatrix}.$$
It is easy to check that $(T^*q_0)(x)=q_0(x)$. For any $u_0\in C^{4,\gamma}(\Sp^2_+)$ and $\tilde g_0\in G_0^l$ we have $\mathcal O[u_0, T^*\tilde g_0]= \mathcal O[u_0\circ T, \tilde g_0]\circ T$ for $\mathcal O\in\set{B_0, W}$, $A[u_0, T^*\tilde g_0]= A[u_0\circ T, \tilde g_0]$ and $C[u_0, T^*\tilde g_0]= -C[u_0\circ T, \tilde g_0]$. See e.g. Lemma 10 in \cite{AK}. These relations readily imply the first two statements.  Exploiting the parity of the barycenter we get
$$D_2 C[0,\delta]q_0=\frac d{d\mu}\bigg|_{\mu=0}C[0,\delta+\mu q_0]=-\frac d{d\mu}\bigg|_{\mu=0}C[0\circ T,\delta+\mu q_0]=-D_2 C[0,\delta]q_0.$$
Hence the first part of the third statement is established. For the second part let $\tilde w_0:=D_2 \psi_0[0,\delta]q_0$. Then by construction of $\psi_0$ in Lemma \ref{boundaryvalueslem} and the first part of this Lemma
$$\left(\frac{\partial\tilde w_0}{\partial\eta},-\frac{\partial}{\partial\eta}(\Delta+2)\tilde w_0\right)=-D_2 B_0[0,\delta]\frac{\partial \tilde g^{p_0,\lambda}}{\partial\lambda}\bigg|_{\lambda=0}=\frac d{d\mu}\bigg|_{\mu=0}B_0[0,\delta+\mu q_0]=\textrm{even}.$$
By definition of the space $Y_0$ we deduce that $\tilde w_0$ is the unique solution to a problem of the form 
$$\Delta^2 \tilde w_0\equiv const.,\hspace{.5cm} \partial_\eta\tilde w_0,\ \partial_\eta\Delta\tilde w_0=\textrm{even}\hspace{.5cm}\text{and}\hspace{.5cm}\int_{\Sp^2_+}\tilde w_0d\mu_{\Sp^2}=0$$
which implies that $\tilde w_0$ is also even. A similar argument proves the claimed parity statement of $D_2\psi_T[0,\delta]q.$ 
\end{proof}

Let $p\in S$, $u_0\in C^{4,\gamma}(\Sp^2_+)$ and $\lambda>0$ be as in Corollary \ref{longimeexistence}. Let $u\in C^{4,1,\gamma}([0,\infty)\times \Sp^2_+)$ and $\xi\in C^{1,\frac\gamma4}([0,\infty),S)$ denote the unique solutions discussed in Theorem \ref{longimeexistence}. The time-dependent metric $\tilde g^{\xi,\lambda}$ is now defined for all $t\in[0,\infty)$. By the decay estimate provided in Theorem \ref{longimeexistence} we deduce that there exists a time $T_0(\lambda)>0$ such that $\|u\|_{C^{4,1,\gamma}([T_0,\infty)\times \Sp^2_+)}\leq M_0\lambda$. Thus, without loss of generality, we may assume
\begin{equation}\label{WLOGtheo2i}
    \|u\|_{C^{4,1,\gamma}([0,\infty)\times \Sp^2_+)}\leq M_0\lambda.
\end{equation}
for the proof of Theorem \ref{theorem2}i).  Given an arbitrary $T>1$ the graph function $u$ satisfies the Equation 
\begin{equation}\label{nonlinearueq}
\left\{\begin{aligned} 
Q_T[u, u_0,\tilde g^{\xi,\lambda},f]&=0,\\
B_T[u,\tilde g^{\xi,\lambda}]&=0,\\
u(0)&=u_0,
\end{aligned}\right.
\end{equation}
where $Q_T$ was defined in Equation (\ref{Qoperatordefinition}) and $f$ is chosen as
$$f=I^ i[u,\tilde g]P^\perp_{K[u,\tilde g]}\left[\tilde g\left(\lambda \bigg(D_2 F^\lambda[\xi,f_u]\bigg)^{-1}D_1 F^\lambda[\xi,f_u]b_i(\xi),\tilde\nu\right)\right]$$
as we described in Equation (\ref{thisisthechoiceforf}). We recall Estimates (\ref{diegl1}) and (\ref{diegl2}):
\begin{equation}\label{recallf}
\|P_{K[0,\delta]}^\perp f\|_{C^{0,0,\gamma}_T}+\|f\|_{C^{0,0,\gamma}_T}^2\leq C(T)\left(\|u\|_{C^{4,1,\gamma}_T}^2+\|\tilde g-\delta\|_{G_T^ l}^2+\lambda^2\right)\leq C(T,M_0)\lambda^2
\end{equation}
In the last step we have used (\ref{WLOGtheo2i}). The idea of the proceeding analysis is the following: We expand the nonlinear equation (\ref{nonlinearueq}) to first order in $u$ and $\lambda$. Making use of Lemma \ref{parity2} and Estimate (\ref{WLOGtheo2i}) we find
$$\left\{\begin{aligned}
    \dot u+\frac12\Delta(\Delta+2) u&=\textrm{even}\cdot\lambda+\mathcal O(\lambda^2), \\
    \frac{\partial u}{\partial\eta},-\frac{\partial}{\partial\eta}(\Delta+2) u&=\textrm{even}\cdot\lambda+\mathcal O(\lambda^2),\\
    u(0)&=u_0,
\end{aligned}\right.$$
where `$\textrm{even'}=\mathcal O(\lambda^0)$. This allows the derivation of the improved decay estimate 
\begin{equation}\label{odddecayeq}
\|u^-(T)\|_{C^{4,\gamma}}\leq C e^{-6T}\|u_0^-\|_{C^{4,\gamma}}+C(T)\lambda^2.
\end{equation}
Once this is established we may follow the \emph{gluing together} strategy demonstrated in the proof of Corollary \ref{longimeexistence} to deduce
$$\|u^-\|_{C^{4,1,\gamma}([\tau,\infty)\times\Sp^2_+)}\leq C\left(e^{-\alpha \tau}\|u^-(0)\|_{C^{4,\gamma}}+\lambda^2\right)$$
which implies Theorem \ref{theorem2}i).\\

Finally, recall the direct decompositions discussed in Lemma \ref{decomposition}. Defining the spaces
$$X_0^\parallel:=\operatorname{span}_\R\set{1,\omega^1,\omega^2}\hspace{.5cm}\textrm{and}\hspace{.5cm}X_0^\perp:=X_0\cap \left(X_0^\parallel\right)^{\perp_{L^2(\Sp^2_+)}}.$$
we may refine the decompositions from Lemma \ref{decomposition} by writing 
\begin{equation}\label{directdec3}
C^{4,\gamma}(\Sp^2_+)=X_0^\perp\oplus X_0^\parallel\oplus Y_0\hspace{.5cm}\textrm{and}\hspace{.5cm}C^{4,1,\gamma}([0,T]\times\Sp^2_+)=X_T^\perp\oplus X_T^\parallel\oplus Y_T
\end{equation}
where for $*=\parallel,\perp $ we introduced $X_T^*:=\set{u\in X_T\ |\ u(t)\in X_0^*\ \forall t\in[0,T]}$. We denote the projections from $C^{4,\gamma}(\Sp^2_+)$/ $C^{4,1,\gamma}([0,T]\times \Sp^2_+)$ onto the direct summands by $\pi_{X_0^\perp}$/ $\pi_{X_T^\perp}$, $\pi_{X_0^\parallel}$/ $\pi_{X_T^\parallel}$ and $\pi_{Y_0}$/ $\pi_{Y_T}$ respectively. It is readily checked that all these projections preserve parity and hence e.g. $\pi_{X_T^\perp}(u^-)=(\pi_{X_T^\perp}(u))^-$.

\begin{lemma}\label{odddecay}
Let $\Omega\in C^{13+2r}$ with $r\geq 2$ and $T>1$. There exists $\lambda_0(T)>0$ such that for $\lambda\in[0,\lambda_0]$ and a solution $(u,\xi)$ from \ref{longimeexistence} satisfying  (\ref{WLOGtheo2i}) we have
\begin{align*} 
&\|u^-\|_{C^{4,1,\gamma}_T}\leq C(T, M_0)\left[\|u_0^-\|_{C^{4,\gamma}}+\lambda^2\right]\\
&\|u^-(T)\|_{C^{4,\gamma}}\leq C(M_0)e^{-6T}\|u_0^-\|_{C^{4,\gamma}}+C(T,M_0)\lambda^2.
\end{align*}
\end{lemma}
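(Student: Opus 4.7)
The plan is to reapply the argument of Theorem \ref{firstestimate} to the odd part $u^-$ rather than to $u$ itself. The gain comes from Lemma \ref{parity2}: after Taylor-expanding the nonlinear equation (\ref{nonlinearueq}) in $\lambda$, the leading $O(\lambda)$ source terms turn out to be even with respect to the reflection $r$, so taking the odd part of the equation eliminates them, leaving only $O(\lambda^2)$ sources. Combined with the a priori bound (\ref{WLOGtheo2i}) and the Schauder analysis of Theorem \ref{firstestimate}, this yields the claimed estimates.

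Concretely, I would first expand $Q_T[u, u_0, \tilde g^{\xi,\lambda}, f]$ and $B_T[u, \tilde g^{\xi,\lambda}]$ around $(0, 0, \delta, 0)$, writing
\begin{equation*}
\tilde g^{\xi,\lambda} - \delta = \lambda\, q + O(\lambda^2), \qquad q := \tfrac{\partial \tilde g^{\xi,\lambda}}{\partial\lambda}\bigg|_{\lambda=0},
\end{equation*}
and lumping all higher-order terms into a remainder of size $O(\lambda^2)$ in $C^{0,0,\gamma}_T$, using (\ref{WLOGtheo2i}) together with the bound (\ref{recallf}) on $f$. By Lemma \ref{parity2} (parts 1 and 2, together with the evenness of $D_2\psi_T[0,\delta]q$), all resulting $O(\lambda)$ contributions to the linearized PDE, the linearized boundary conditions, and to $\psi_T$ are even. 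Applying the odd projection $\pi^-$, which commutes with $\partial_t$, $\Delta$ and with the decomposition of Lemma \ref{decomposition}, the odd part $u^-$ satisfies a linear problem of the form
\begin{equation*}
\dot u^- + \tfrac12\Delta(\Delta+2) u^- = R^-,\quad \tfrac{\partial u^-}{\partial\eta} = \alpha^-,\quad \tfrac{\partial \Delta u^-}{\partial\eta} = \beta^-,\quad u^-(0) = u_0^-,
\end{equation*}
with $\|R^-\|_{C^{0,0,\gamma}_T} + \|\alpha^-\|_{C^{3,0,\gamma}_T} + \|\beta^-\|_{C^{1,0,\gamma}_T} \leq C(T, M_0)\lambda^2$.

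Next I would rerun the argument of Theorem \ref{firstestimate} on this linear problem. Decompose $u^- = w^- + \tilde w^-$ via Lemma \ref{decomposition}, and split $w^- = w^-_\parallel + w^-_\perp$ using (\ref{directdec3}). Since odd elements of $X_0^\parallel = \operatorname{span}\{1,\omega^1,\omega^2\}$ lie in $\operatorname{span}\{\omega^1,\omega^2\}$, the initial datum $w^-_\parallel(0)$ can be controlled by $C(\|u_0\|_{C^{4,\gamma}}^2 + \lambda^2)$ by expanding the constraints $C^i[u_0, \tilde g^{p,\lambda}] = 0$ around $(0,\delta)$ and invoking $D_2 C[0,\delta]q = 0$ from Lemma \ref{parity2} part 3. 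The component $w^-_\perp$ enjoys the improved Schauder estimate with decay $e^{-6T}$, giving the exponent appearing in the theorem. Combining the three pieces and using the bound on $\psi_T$ from the second remark after Lemma \ref{boundaryvalueslem} yields the claimed estimates for $u^-$.

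The main obstacle is the bookkeeping of parity through the nonlinear pieces, namely the $\tau$ term, the projections $P_{K[u,\tilde g]}^\perp$, and the operator $I^i$, particularly because the curve $\xi(t)$ need not be symmetric in any natural sense. Lemma \ref{parity2} supplies all the needed identities at the symmetric reference state $(u,\lambda) = (0,0)$, but one must verify carefully that every $O(\lambda)$ term in the Taylor expansion really arises from the linearization at this state, with fluctuations of $\xi$ and of $u$ entering only at $O(\lambda^2)$; for this, (\ref{WLOGtheo2i}) and the bound on $\dot\xi$ from Corollary \ref{longimeexistence} together with (\ref{metricestimate}) should suffice.
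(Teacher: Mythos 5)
Your proposal mirrors the paper's proof in all essentials: Taylor-expand the equation in $\lambda$ around $(0,0,\delta,0)$, invoke Lemma \ref{parity2} to identify the $O(\lambda)$ sources as even, take the odd part to reduce them to $O(\lambda^2)$ remainders (using (\ref{WLOGtheo2i}) and (\ref{recallf})), then apply the decomposition $X_T^\perp\oplus X_T^\parallel\oplus Y_T$ with the improved Schauder decay on $X_T^\perp$, the barycenter constraint with $D_2C[0,\delta]q=0$ on $X_T^\parallel$, and the $\psi_T$ smallness on $Y_T$. The only difference from the paper is bookkeeping order (you apply the parity projection before the $X_T^\perp/X_T^\parallel/Y_T$ projections, the paper the other way around, but these commute); note also that the second-remark bound on $\psi_T$ alone only gives $O(\lambda)$, so for $\tilde w^-$ you do need the evenness of $D_2\psi_T[0,\delta]q$ that you cite earlier, exactly as the paper does in (\ref{fgrtildewest}).
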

\begin{proof}
Put $l:=8+r$. The operator $Q_T$ from (\ref{Qoperatordefinition}) is of class $C^{l-8}=C^r$ and the map $\lambda\rightarrow \tilde g^{\xi,\lambda}\in G_T^{8+r}$ is of class $C^r$. Putting $q:=\frac{\partial g^{\xi,\lambda}}{\partial\lambda}\big|_{\lambda=0}$, $q_0:=q(0)$ and expanding Equation (\ref{nonlinearueq}) around $(u, u_0, \lambda,f)=(0,0,0,0)$ then gives 
\begin{equation}\label{theo2iueqproof}
    \left\{\begin{aligned}
    \dot u+\frac12\Delta(\Delta+2)u&=\sum_{\mu=0}^2\frac{D_2 C^\mu[0,\delta]q}{\|\nabla C^\mu[0,\delta]\|_{L^2(\Sp^2_+)}^2}\nabla C^\mu[0,\delta]\\
    &\hspace{1cm}+P_{K[0,\delta]}^\perp \bigg[D_2 W[0,\delta]q-f\bigg]+R_1,\\
    \left(\frac{\partial u}{\partial\eta},-\frac{\partial}{\partial\eta}(\Delta+2)u\right)&=-D_2 B_T[0,\delta]q+R_2,\\
    u(0)&=u_0.
\end{aligned}\right.
\end{equation}
Using $Q_T\in C^2$ as well as Estimates (\ref{WLOGtheo2i}) and (\ref{recallf}) we get
$$\|R\|_{C^{0,0,\gamma}_T}\leq C(T)\left[\|u\|_{C^{4,1,\gamma}_T}^2+\lambda^2+\|f\|_{C^{0,0,\gamma}_T}^2\right]\leq C(T)\lambda^2.$$
We apply the projections $\pi_{X_T^\perp}$ and $\pi_{X_T^\parallel}$ to Equation (\ref{theo2iueqproof}). Using Lemma \ref{parity2} we may infer  that $v^-:=\pi_{X_T^\perp}(u^-)$ and $\tilde v^-:=\pi_{X_T^\parallel}(u^-)$ satisfy the following equations:
$$\left\{\begin{aligned}
   & \dot v^-+\frac12\Delta(\Delta+2)v^-=-P_{K[0,\delta]}^\perp [f]^-+\pi_{X_T^\perp}(R_1^-),\\
    &\frac{\partial v^-}{\partial\eta}=\frac{\partial\Delta v^-}{\partial\eta}=0,\\
   & v^-(0)=\pi_{X_0^\perp}(u_0^-).
\end{aligned}\right.
\hspace{.5cm}\textrm{and}\hspace{.5cm}
\left\{\begin{aligned}
    &\dot {\tilde v}^-+\frac12\Delta(\Delta+2)\tilde v^-=\pi_{X_T^\parallel}(R^-),\\
    &\frac{\partial \tilde v^-}{\partial\eta}=\frac{\partial\Delta \tilde v^-}{\partial\eta}=0,\\
    &\tilde v^-(0)=\pi_{X_0^\parallel}(u_0^-).
\end{aligned}\right.
$$
Applying the improved Schauder estimates from Appendix  \ref{improvedschauder} to the first equation and regular Schauder estimates (see e.g. \cite{simon, eidelman}) to the second one immediately gives 
\begin{align} 
\|v^-\|_{C^{4,1,\gamma}_T}\leq  C(T)&\left[\|u_0^-\|_{C^{4,\gamma}}+\lambda^2\right],\hspace{.5cm}\|v^-(T)\|_{C^{4,\gamma}}\leq Ce^{-6T}\|u_0^-\|_{C^{4,\gamma}}+C(T)\lambda^2\label{line1fgr}\\
\textrm{as well as}\hspace{.5cm}&\|\tilde v^-\|_{C^{4,1,\gamma}_T}\leq C(T)[\|\pi_{X_0^\parallel} u^-_0\|_{C^{4,\gamma}}+\lambda^2].\label{line2fgr}
\end{align}
By definition $\tilde v^-_0\in\operatorname{span}_\R\set{\omega^1,\omega^2}$. Using $C^i\in C^2$, $C[u_0,\tilde g^{p,\lambda}]=0$, $\nabla C^i[0,\delta]=-\frac3{2\pi}\omega^i$ and Lemma \ref{parity2} we get
\begin{align}
    \|\tilde v^-_0\|_{C^{4,\gamma}}&\leq C\max_{i=1,2}\left|\int_{\Sp^2_+}\omega^i u_0\right|\nonumber\\
    &\leq C\left|C[u_0,\tilde g^{p,\lambda}]-D_2C[0,\delta]q_0\right|+C(\|u_0\|_{C^{4,\gamma}}^2+\lambda^2)\nonumber\\
    &\leq C\lambda^2\label{line3fgr}
\end{align}
Let $w^-:=v^-+\tilde v^-=\pi_{X_T}(u^-)$ and $\tilde w^-:=\pi_{Y_T}(u^-)$. Combining Estimates (\ref{line1fgr}), (\ref{line2fgr}) and (\ref{line3fgr}) we derive
\begin{equation}\label{fgrwestimate}
\|w^-\|_{C^{4,1,\gamma}_T}\leq C(T)\left[\|u_0^-\|_{C^{4,\gamma}}+\lambda^2\right]\hspace{.5cm}\text{and}\hspace{.5cm}\|w^-(T)\|_{C^{4,\gamma}}\leq Ce^{-6T}\|u_0^-\|_{C^{4,\gamma}}+C(T)\lambda^2.
\end{equation}
Finally, we use $\tilde w^-=\psi_T[w,\tilde g^{\xi,\lambda}]^-$, $D_1\psi_T[0,\delta]=0$, the parity of $D_2\psi_T[0,\delta]q$ derived in Lemma \ref{parity2} and Estimates (\ref{WLOGtheo2i}) and (\ref{fgrwestimate}) to get
\begin{equation}\label{fgrtildewest}
\|\tilde w^-\|_{C^{4,1,\gamma}_T}\leq C(T)(\|w\|_{C^{4,1,\gamma}_T}^2+\lambda^2)\leq C(T)\lambda^2
\end{equation}
which, when combined with Estimate (\ref{fgrwestimate}), implies the Lemma as $u^-=w^-+\tilde w^-$.
\end{proof}

As noted above, Lemma \ref{odddecay} implies Theorem \ref{theorem2}i).

\subsection{Proof of Theorem 2ii)}
Recall that for a map $v:\Sp^2_+\rightarrow\R$ we denote the even part by $v^+$ and the odd part by $v^-$. We follow an idea from Mattuschka \cite{Mattuschka}. For $p\in S$, $\lambda>0$ and $M>0$ put 
$$\mathcal A_{p,\lambda}^M:=\left\{u_0\in C^{4,\gamma}(\Sp^2_+)\ \bigg|\ \substack{
    \text{$(u_0,p)$ is admissible, }\\
    \|u_0\|_{C^{4,\gamma}}\leq M\lambda\text{ and }\|u_0^-\|_{C^{4,\gamma}}\leq M\lambda^2
    }\right\}.$$
Note that for $u\in\mathcal A_{p,\lambda}^M$ the immersion $\phi_{p,u}^\lambda$ (recall Subsection \ref{terminilogy}) satisfies $\phi^\lambda_{p,u}\in \mathcal S^-(\lambda, M)$.\\

Let $(u,\xi)$ denote a solution as it is described in \ref{longimeexistence} and denote the basis vectors along $\xi$ by $b_i:[0,\infty)\rightarrow TS$. By assumption we have $u(t)\in\mathcal A_{\xi(t),\lambda}^M$ for all $t\geq 0$. The strategy to prove Theorem \ref{theorem2}ii) is to define a family of maps
$$\hat u_t:[0,\lambda]\rightarrow C^{4,\gamma}(\Sp^2_+)\hspace{.5cm}\textrm{such that }\hat u_t[\lambda]=u(t),\ \hat u_t[0]=0\hspace{.2cm}\textrm{and}\hspace{.2cm}\hat u_t[s]\in \mathcal A_{\xi(t),s}^{C(M)}.$$
Once this is established we may use the evolution equation (\ref{fgrwestimate}) to get the Equation 
$$\langle \dot\xi(t),b_i(t)\rangle=-\lambda I^i[u(t),\tilde g^{\xi(t),\lambda}]=-\lambda I^i[\hat u_t[\lambda],\tilde g^{\xi(t),\lambda}],$$
expand the right hand side in $\lambda$ and obtain the Theorem. The first step towards the construction of a suitable map is the following lemma.

\begin{lemma}\label{constrainslemma}
Let $l\geq 8$. There exist $\theta_0,\theta_1,\theta_2, \sigma_0>0$ and $C^{l-7}$ maps $w_0^\parallel:X_0^\perp(\theta_0)\times G_0^l(\delta;\sigma_0)\rightarrow X_0^\parallel(\theta_1)$ and $\tilde w_0:X_0^\perp(\theta_0)\times G_0^l(\delta;\sigma_0)\rightarrow Y_0(\theta_2)$  such that for all $(w_0^\perp,w_0^\parallel,\tilde w_0,\tilde g_0)\in X_0^\perp(\theta_0)\times X_0^\parallel(\theta_1)\times Y_0(\theta_2)\times G_0^l(\delta;\sigma_0)$  we have
    $$\begin{aligned}
    &B_0[w^\perp_0+ w^\parallel_0+\tilde w_0,\,\tilde g_0]=0\\
    &C[w^\perp_0+ w^\parallel_0+\tilde w_0,\,\tilde g_0]=0\\
    &A[w^\perp_0+ w^\parallel_0+\tilde w_0,\,\tilde g_0]=2\pi
    \end{aligned}\hspace{.3cm}\Leftrightarrow\hspace{.3cm} w^\parallel_0=w^\parallel_0[w^\perp_0,\tilde g_0]\text{ and }\tilde w_0=\tilde w_0[w^\perp,\tilde g_0].$$

The maps satisfy $D_1 w_0^\parallel[0,\delta]=0$ and $D_1 \tilde w_0[0,\delta]=0$.
\end{lemma}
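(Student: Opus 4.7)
The plan is to apply Lemma \ref{boundaryvalueslem} to first eliminate $\tilde w_0$ via $\tilde w_0 = \psi_0[w_0^\perp + w_0^\parallel, \tilde g_0]$, and then use the implicit function theorem in the three-dimensional space $X_0^\parallel = \operatorname{span}_\R\{1,\omega^1,\omega^2\}$ to kill the three remaining scalar constraints $A = 2\pi$, $C^1 = 0$, $C^2 = 0$. The splitting $X_0 = X_0^\perp \oplus X_0^\parallel$ has exactly the right dimension count: three unknowns in $X_0^\parallel$ against three scalar equations.

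Concretely, I would introduce
$$G:X_0^\parallel\times X_0^\perp\times G_0^l\to \R\times\R^2,\quad G[w_0^\parallel,w_0^\perp,\tilde g_0]:=\Big(A[u_0,\tilde g_0]-2\pi,\ C^1[u_0,\tilde g_0],\ C^2[u_0,\tilde g_0]\Big)$$
with $u_0:=w_0^\perp+w_0^\parallel+\psi_0[w_0^\perp+w_0^\parallel,\tilde g_0]$. Because $A[0,\delta]=2\pi$, $C[0,\delta]=0$ and $\psi_0[0,\delta]=0$, we have $G[0,0,\delta]=0$. Since $\psi_0\in C^{l-7}$ from Lemma \ref{boundaryvalueslem} and $A,C^i\in C^{l-6}$, the map $G$ is of class $C^{l-7}$. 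The crucial computation is $D_1 G[0,0,\delta]\colon X_0^\parallel\to\R^3$: using $D_1\psi_0[0,\delta]=0$ the chain rule reduces this to
$$D_1 G[0,0,\delta]w_0^\parallel=\Big(\langle\nabla A[0,\delta],w_0^\parallel\rangle_{L^2},\ \langle\nabla C^1[0,\delta],w_0^\parallel\rangle_{L^2},\ \langle\nabla C^2[0,\delta],w_0^\parallel\rangle_{L^2}\Big).$$
Inserting $\nabla A[0,\delta]=-2$, $\nabla C^i[0,\delta]=-\tfrac{3}{2\pi}\omega^i$ and the standard $L^2(\Sp^2_+)$-orthogonality relations among $\{1,\omega^1,\omega^2\}$, a short computation gives $D_1G[0,0,\delta](a_0+a_1\omega^1+a_2\omega^2)=(-4\pi a_0,-a_1,-a_2)$, which is a linear isomorphism. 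The implicit function theorem then produces the $C^{l-7}$-map $w_0^\parallel=w_0^\parallel[w_0^\perp,\tilde g_0]$, and I define $\tilde w_0[w_0^\perp,\tilde g_0]:=\psi_0[w_0^\perp+w_0^\parallel[w_0^\perp,\tilde g_0],\tilde g_0]$. The equivalence claimed in the lemma is then immediate: the forward direction is by construction, while uniqueness follows because $B_0=0$ forces $\tilde w_0=\psi_0[w_0^\perp+w_0^\parallel,\tilde g_0]$ by Lemma \ref{boundaryvalueslem}, after which $A=2\pi$ and $C^i=0$ determine $w_0^\parallel$ by IFT uniqueness.

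The derivative statements $D_1 w_0^\parallel[0,\delta]=0$ and $D_1\tilde w_0[0,\delta]=0$ form the geometric payoff of this particular decomposition and will fall out automatically. Differentiating the defining identity $G[w_0^\parallel[w_0^\perp,\tilde g_0],w_0^\perp,\tilde g_0]\equiv 0$ in the $w_0^\perp$ direction at the base point yields $D_1 w_0^\parallel[0,\delta]=-\bigl(D_1 G[0,0,\delta]\bigr)^{-1}D_2 G[0,0,\delta]$, and an identical chain-rule computation shows that $D_2 G[0,0,\delta] w_0^\perp$ consists precisely of the $L^2$-pairings of $w_0^\perp$ with $1$, $\omega^1$ and $\omega^2$, all of which vanish by the very definition $X_0^\perp:=X_0\cap(\operatorname{span}_\R\{1,\omega^1,\omega^2\})^{\perp_{L^2}}$. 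Hence $D_1 w_0^\parallel[0,\delta]=0$, and then $D_1\tilde w_0[0,\delta]=D_1\psi_0[0,\delta]\circ(\mathrm{Id}+D_1 w_0^\parallel[0,\delta])=0$ because $D_1\psi_0[0,\delta]=0$. I do not expect a substantive obstacle here; the whole argument is essentially bookkeeping once the splitting $X_0=X_0^\perp\oplus X_0^\parallel$ is in place, and the only care required is the dimension count that matches the three-parameter family $X_0^\parallel$ to the triple $(A,C^1,C^2)$ of scalar constraints.
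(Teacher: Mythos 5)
Your proposal is correct and takes essentially the same route as the paper: eliminate $\tilde w_0$ by Lemma \ref{boundaryvalueslem}, set up a map from $X_0^\parallel\times X_0^\perp\times G_0^l$ into $\R^3$ recording the constraints $A-2\pi$, $C^1$, $C^2$, check that its partial differential in the $X_0^\parallel$-direction at the base point is an isomorphism using $\nabla A[0,\delta]=-2$ and $\nabla C^i[0,\delta]=-\tfrac3{2\pi}\omega^i$, and invoke the implicit function theorem. Your explicit chain-rule derivation of $D_1 w_0^\parallel[0,\delta]=0$ (reducing it to the $L^2$-orthogonality defining $X_0^\perp$) and the subsequent deduction of $D_1\tilde w_0[0,\delta]=0$ from $D_1\psi_0[0,\delta]=0$ matches what the paper gestures at, so no gap.
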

\begin{proof}
From Lemma \ref{boundaryvalueslem} we immediately find that $\tilde w_0=w^\perp_0+w^\parallel_0+\psi_0[w^\perp_0+w^\parallel_0,\tilde g_0]$ must be true to satisfy $B_0[w^\perp_0+ w^\parallel_0+\tilde w_0,\tilde g_0]=0$. On a neighbourhood of $(0,0,\delta)$ we now define a map $F:X_0^\perp\times X_0^\parallel\times G_0^ l\rightarrow \R^3$ by
$$(w^\perp_0,w^\parallel_0,\tilde g_0)\mapsto \left(A\left[w^\perp_0+w^\parallel_0+\psi_0[w^\perp_0+w^\parallel_0,\tilde g_0],\tilde g_0\right]-2\pi, C\left[w^\perp_0+w^\parallel_0+\psi_0[w^\perp_0+w^\parallel_0,\tilde g_0],\tilde g_0\right]\right).$$
As $\psi_0, A$ and $C$ are of class $C^{l-7}$ we have $F\in C^{l-7}$. Clearly $F[0,0,\delta]=0$. Additionally
$$D_2F[0,0,\delta]\varphi=\left(2\int_{\Sp^2_+}\varphi d\mu_{\Sp^2},-\frac3{2\pi}\int_{\Sp^2_+}\varphi\omega^ 1 d\mu_{\Sp^2},-\frac3{2\pi}\int_{\Sp^2_+}\varphi\omega^ 2 d\mu_{\Sp^2}\right).$$
The implicit function theorem implies the existence of the neighbourhoods and the map $w^\parallel_0$ as well as its regularity. To establish $D_1 w_0^\parallel[0,\delta]=0$ we use $D_1\psi_0[0,\delta]=0$ and argue as in the proof of Lemma \ref{boundaryvalueslem}. $D_1\tilde w_0[0,\delta]=0$ then follows from
\begin{equation}\label{tildew0functional}
\tilde w_0[w^\perp,\delta]=\psi_0[w^\perp_0+w^\parallel_0[w^\perp_0,\delta],\delta].
\end{equation}
\end{proof}
We need the following properties of the maps $w^\parallel_0$ and $\tilde w_0$ that we just constructed.

\begin{lemma}\label{evenfunctions}
Let $\Omega\in C^{13+2r}$ so that $\lambda\mapsto\tilde g ^{p,\lambda}\in G_0^{8+r}$ is a map of class $C^r$. Let $M>0$: There exists $\lambda_0(M)>0$ such that for $\lambda\in[0,\lambda_0(M)]$:
\begin{enumerate}
\item The following are even functions:
\begin{equation*}
    D_2 w_0^\parallel[0,\delta]\frac{\partial\tilde g^{p,\lambda}}{\partial\lambda}\bigg|_{\lambda=0}\hspace{.5cm}\text{and}\hspace{.5cm}
D_2 \tilde w_0[0,\delta]\frac{\partial\tilde g^{p,\lambda}}{\partial\lambda}\bigg|_{\lambda=0}
\end{equation*}
 \item For $(w_0^\perp,p)\in X_0\times S$ satisfying $\|w_0^\perp\|_{C^{4,\gamma}}\leq M\lambda$ put $u_0:=w_0^\perp+w^\parallel[w_0^\perp,\tilde g^{p,\lambda}]+\tilde w[w_0^\perp,\tilde g^{p,\lambda}]$. Then $\|u_0^--(w_0^\perp)^-\|\leq C(M)\lambda^2$.
    \item  For admissible $(u_0,p)\in C^{4,\gamma}(\Sp^2_+)\times S$ satisfying $\|u_0\|_{C^{4,\gamma}}\leq M\lambda$ and $w_0^\perp:=\pi_{X_0^\perp}(u_0)$ we have $\|u_0^--(w_0^\perp)^-\|\leq C(M)\lambda^2$.
\end{enumerate}
\end{lemma}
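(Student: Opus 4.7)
The three parts cascade: Part 1 is the central parity computation, Part 2 follows from it by a first-order Taylor expansion in $\lambda$, and Part 3 reduces to Part 2 once Lemma~\ref{constrainslemma} is used to identify the decomposition of an arbitrary admissible initial value. Throughout I set $q_0:=\frac{\partial \tilde g^{p,\lambda}}{\partial\lambda}\big|_{\lambda=0}$ and recall the three ingredients from Lemma~\ref{parity2}: $D_2 C[0,\delta]q_0 = 0$, $D_2\psi_0[0,\delta]q_0$ is even, and $\nabla C^i[0,\delta] = -\tfrac{3}{2\pi}\omega^i$ is odd.

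For Part 1, I would handle the two maps separately. For $\tilde w_0$, substitute the formula $\tilde w_0[w_0^\perp,\tilde g_0] = \psi_0[w_0^\perp + w_0^\parallel[w_0^\perp,\tilde g_0], \tilde g_0]$ from (\ref{tildew0functional}) and differentiate in the $q_0$ direction at $(0,\delta)$; using $D_1\psi_0[0,\delta]=0$ this collapses to $D_2\tilde w_0[0,\delta]q_0 = D_2\psi_0[0,\delta]q_0$, which is even by Lemma~\ref{parity2}. For $w_0^\parallel$, differentiate the defining identity $F[w_0^\perp, w_0^\parallel[w_0^\perp,\tilde g_0],\tilde g_0]=0$ to obtain
\[
D_2 F[0,0,\delta]\,D_2 w_0^\parallel[0,\delta]q_0 = -D_3 F[0,0,\delta]q_0.
\]
The restriction of $D_2 F[0,0,\delta]$ to $X_0^\parallel=\operatorname{span}_\R\{1,\omega^1,\omega^2\}$ acts essentially by the three moment functionals $\varphi\mapsto\bigl(\int\varphi,\int\varphi\omega^1,\int\varphi\omega^2\bigr)$ and is therefore an isomorphism. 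The key point is that the two barycenter components of $D_3 F[0,0,\delta]q_0$ vanish: the chain rule through the nested $\psi_0$ gives
\[
D_2 C^i[0,\delta]q_0 + \int_{\Sp^2_+}\nabla C^i[0,\delta]\bigl(D_2\psi_0[0,\delta]q_0\bigr)\,d\mu_{\Sp^2}.
\]
The first term is zero by Lemma~\ref{parity2}, and the second equals $-\tfrac{3}{2\pi}\int_{\Sp^2_+}\omega^i(D_2\psi_0[0,\delta]q_0)\,d\mu_{\Sp^2}$, which vanishes because the integrand is odd (product of the odd $\omega^i$ and the even $D_2\psi_0[0,\delta]q_0$) and odd functions integrate to zero over $\Sp^2_+$. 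Hence $D_2 w_0^\parallel[0,\delta]q_0$ has vanishing $\omega^1,\omega^2$-components, i.e.\ lies in $\R\cdot 1$ and is even.

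For Part 2, I Taylor-expand $w_0^\parallel[w_0^\perp,\tilde g^{p,\lambda}]$ and $\tilde w_0[w_0^\perp,\tilde g^{p,\lambda}]$ to first order at $(0,\delta)$. The $C^2$-regularity of both maps (valid once $r\geq 2$, so $l-7\geq 3$), together with $D_1 w_0^\parallel[0,\delta]=D_1\tilde w_0[0,\delta]=0$ from Lemma~\ref{constrainslemma} and $\tilde g^{p,\lambda}-\delta = \lambda q_0 + O(\lambda^2)$ in $G_0^l$, yields
\[
w_0^\parallel[w_0^\perp,\tilde g^{p,\lambda}]+\tilde w_0[w_0^\perp,\tilde g^{p,\lambda}] = \lambda\bigl(D_2 w_0^\parallel[0,\delta]q_0 + D_2\tilde w_0[0,\delta]q_0\bigr) + O\bigl(\|w_0^\perp\|_{C^{4,\gamma}}^2+\lambda^2\bigr).
\]
The leading $\lambda$-term is even by Part 1, so under $\|w_0^\perp\|_{C^{4,\gamma}}\leq M\lambda$ the odd part of $u_0-w_0^\perp$ is bounded by $C(M)\lambda^2$, which is the claim.

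For Part 3, use the refined direct decomposition (\ref{directdec3}) to write $u_0 = w_0^\perp + \pi_{X_0^\parallel}(u_0) + \pi_{Y_0}(u_0)$ with $\|w_0^\perp\|_{C^{4,\gamma}}\leq C\|u_0\|_{C^{4,\gamma}}\leq CM\lambda$. Admissibility of $(u_0,p)$ means all three constraints in Lemma~\ref{constrainslemma} hold; after shrinking $\lambda_0(M)$ so that the data stay inside the uniqueness neighbourhoods of Lemma~\ref{constrainslemma}, the uniqueness clauses force $\pi_{X_0^\parallel}(u_0)=w_0^\parallel[w_0^\perp,\tilde g^{p,\lambda}]$ and $\pi_{Y_0}(u_0)=\tilde w_0[w_0^\perp,\tilde g^{p,\lambda}]$, so Part 2 applies directly. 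The main obstacle is Part 1: cleanly tracking every parity contribution to $D_3 F[0,0,\delta]q_0$, which hinges on the three identities from Lemma~\ref{parity2} together with the vanishing of integrals of odd functions over $\Sp^2_+$.
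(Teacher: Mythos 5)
Your argument is correct and follows essentially the same path as the paper: Part~1 uses $D_2\tilde w_0[0,\delta]=D_2\psi_0[0,\delta]$ and the parity of $D_2\psi_0[0,\delta]q_0$ for the second claim, and for the first claim differentiates the barycenter constraint to show the $\omega^1,\omega^2$-moments of $D_2w_0^\parallel[0,\delta]q_0$ vanish (the paper differentiates $C^i[w_0^\parallel[0,\tilde g^{p,\lambda}]+\tilde w_0[0,\tilde g^{p,\lambda}],\tilde g^{p,\lambda}]=0$ directly while you pass through the differentials of the implicit-function map $F$, but these are the same computation). Parts~2 and~3 are the same Taylor expansion and uniqueness argument the paper gives.
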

\begin{proof}
We exploit $D_2 C^ i[0,\delta]\frac{\partial \tilde g^{p,\lambda}}{\partial\lambda}\big|_{\lambda=0}=0$ from Lemma \ref{parity2}. 
Using $D_2 \tilde w_0[0,\delta]=D_2\psi_0[0,\delta]$ from Equation (\ref{tildew0functional}) and the parity of $D_2 \psi_0[0,\delta ]\frac{\partial\tilde g^{p,\lambda}}{\partial\lambda}\big|_{\lambda=0}$ from Lemma \ref{parity2} we deduce the second claimed parity and can compute 
\begin{align}
    0&=\frac d{d\lambda}\bigg|_{\lambda=0}C^ i[w^\parallel_0[0,\tilde g^{p,\lambda}]+\tilde w_0[0,\tilde g^{p,\lambda}],\tilde g^{p,\lambda}]\nonumber\\
    &=-\frac3{2\pi}\int_{\Sp^2_+}\omega^ iD_2w_0^ \parallel[0,\delta]\frac{\partial \tilde g^{p,\lambda}}{\partial\lambda}\bigg|_{\lambda=0}d\mu_{\Sp^2}-\frac3{2\pi}\int_{\Sp^2_+}\omega^ iD_2 \tilde w_0[0,\delta]\frac{\partial \tilde g^{p,\lambda}}{\partial\lambda}\bigg|_{\lambda=0}d\mu_{\Sp^2}\nonumber\\
    &=-\frac3{2\pi}\int_{\Sp^2_+}\omega^ iD_2w_0^ \parallel[0,\delta]\frac{\partial \tilde g^{p,\lambda}}{\partial\lambda}\bigg|_{\lambda=0}d\mu_{\Sp^2}\label{andhenceeven}
\end{align}
As $D_2w_0^ \parallel[0,\delta]\frac{\partial \tilde g^{p,\lambda}}{\partial\lambda}\big|_{\lambda=0}\in X_0^\parallel=\operatorname{span}_\R\set{1,\omega^1,\omega^2}$ Equation (\ref{andhenceeven}) implies the claimed parity. For the second part we use Lemma \ref{constrainslemma} and part 1 of this Lemma to estimate 
\begin{align*} 
\|u_0^--(w_0^\perp)^-\|_{C^{4,\gamma}}\leq & \|w_0^\parallel[w^\perp_0, \tilde g^{p,\lambda}]^-\|_{C^{4,\gamma}}+\|\tilde w_0[w^\perp_0, \tilde g^{p,\lambda}]^-\|_{C^{4,\gamma}}\\
\leq & C\left(\|w^\perp_0\|_{C^{4,\gamma}}^2+\lambda^2\right)\\
\leq & C(M)\lambda^2.
\end{align*}
The final statement follows from the second by noting that admissibility implies $u_0=w_0^\perp+w^\parallel_0[w_0^\perp,\tilde g^{p,\lambda}]+\tilde w_0[w_0^\perp,\tilde g^{p,\lambda}]$ and that the continuity of the projection $u_0\mapsto w_0^\perp$ gives the necessary estimate. 
\end{proof}

\begin{lemma}\label{map}
Let $\Omega\in C^{13+2r}$ with $r\geq 2$ and $M>0$. There exists $\lambda_0(M)>0$ such that for all $p\in S$, $\lambda\in[0,\lambda_0(M)]$ and $u_0\in\mathcal A^M_{p,\lambda}$ there exists a map $\hat u_0:[0,\lambda]\rightarrow C^{4,\gamma}(\Sp^2_+)$ of class $C^r$ that has the following properties:
\begin{enumerate}
    \item $\hat u_0(0)=0$ and $\hat u(\lambda)=u$. Additionally $\hat u_0(s)\in \mathcal A^{C(M)}_{p,s}$ for all $s\in[0,\lambda]$. 
    \item $\hat u_0'(0)$ is an even function.
    \item $\|\hat u_0\|_{C^r([0,\lambda], C^{4,\gamma}(\Sp^2_+))}\leq C(M,\Omega,r).$
\end{enumerate}
\end{lemma}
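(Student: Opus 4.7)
The plan is to construct $\hat u_0(s)$ by decomposing $u_0$ through Lemma \ref{constrainslemma} and then interpolating the two parity components at different rates. Setting $w_0^\perp := \pi_{X_0^\perp}(u_0)$, admissibility of $(u_0,p)$ and Lemma \ref{constrainslemma} give
\begin{equation*}
u_0 = w_0^\perp + w_0^\parallel[w_0^\perp,\tilde g^{p,\lambda}] + \tilde w_0[w_0^\perp,\tilde g^{p,\lambda}],
\end{equation*}
with $\|(w_0^\perp)^+\|_{C^{4,\gamma}}\leq M\lambda$ trivially and $\|(w_0^\perp)^-\|_{C^{4,\gamma}}\leq C(M)\lambda^2$ by Lemma \ref{evenfunctions} (3). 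I would then define
\begin{equation*}
\hat w^\perp(s):= \frac{s}{\lambda}(w_0^\perp)^+ + \Bigl(\frac{s}{\lambda}\Bigr)^{\!2}(w_0^\perp)^-,\qquad
\hat u_0(s):= \hat w^\perp(s)+w_0^\parallel[\hat w^\perp(s),\tilde g^{p,s}]+\tilde w_0[\hat w^\perp(s),\tilde g^{p,s}],
\end{equation*}
so that the constraint maps are evaluated at the rescaled metric $\tilde g^{p,s}$ rather than at $\tilde g^{p,\lambda}$. For $\lambda_0(M)$ small the arguments stay in the neighbourhoods of Lemma \ref{constrainslemma}, so the construction is well-defined.

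By Lemma \ref{constrainslemma} each $\hat u_0(s)$ is automatically admissible with respect to $\tilde g^{p,s}$. The endpoints are handled by $\hat w^\perp(0)=0$, $\tilde g^{p,0}=\delta$ (visible from (\ref{metricformula})) together with uniqueness in Lemma \ref{constrainslemma}, which force $\hat u_0(0)=0$ and $\hat u_0(\lambda)=u_0$. The bound $\|\hat u_0(s)\|_{C^{4,\gamma}}\leq C(M)s$ follows from $\|\hat w^\perp(s)\|_{C^{4,\gamma}}\leq C(M)s$, the estimate $\|\tilde g^{p,s}-\delta\|_{G_0^l}\leq C(\Omega)s$, and Taylor expansion of $w_0^\parallel,\tilde w_0$ around $(0,\delta)$ using $D_1 w_0^\parallel[0,\delta]=0=D_1\tilde w_0[0,\delta]$. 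The odd-part estimate is the delicate point: $(\hat w^\perp(s))^-=(s/\lambda)^2(w_0^\perp)^-$ is already $\mathcal O(s^2)$, while for the constraint terms one Taylor expands in both arguments and observes that the only linear-in-$s$ contribution is
\begin{equation*}
s\cdot D_2 w_0^\parallel[0,\delta]\,\partial_\lambda \tilde g^{p,\lambda}|_{\lambda=0} + s\cdot D_2 \tilde w_0[0,\delta]\,\partial_\lambda \tilde g^{p,\lambda}|_{\lambda=0},
\end{equation*}
both of which are \emph{even} by Lemma \ref{evenfunctions} (1). Hence their odd parts vanish and the odd part of $\hat u_0(s)$ is $\mathcal O(s^2)$ with constant $C(M)$, so $\hat u_0(s)\in\mathcal A^{C(M)}_{p,s}$. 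The very same computation at $s=0$ yields
\begin{equation*}
\hat u_0'(0)=\tfrac{1}{\lambda}(w_0^\perp)^+ + D_2(w_0^\parallel+\tilde w_0)[0,\delta]\,\partial_\lambda \tilde g^{p,\lambda}|_{\lambda=0},
\end{equation*}
every summand being even, which proves part 2.

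The $C^r$-bound is straightforward: $\hat w^\perp$ is a polynomial of degree two in $s$ whose derivatives are controlled uniformly (for instance $\|\hat w^\perp{}''\|_{C^{4,\gamma}}\leq 2\lambda^{-2}\|(w_0^\perp)^-\|_{C^{4,\gamma}}\leq C(M)$), while Lemma \ref{constrainslemma} with $l=8+r$ gives $w_0^\parallel,\tilde w_0\in C^{l-7}=C^{r+1}$ and $\Omega\in C^{13+2r}$ makes $s\mapsto \tilde g^{p,s}\in C^r$, so a chain-rule estimate on a bounded set closes part 3. The main obstacle is the odd-part bound: a naive linear interpolation would only yield $\|\hat u_0(s)^-\|_{C^{4,\gamma}}\lesssim s\lambda$, which is much too weak when $s\ll\lambda$. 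Recovering the sharp $\mathcal O(s^2)$ bound is precisely what forces the quadratic rescaling of $(w_0^\perp)^-$ and makes Lemma \ref{evenfunctions} (1) indispensable.
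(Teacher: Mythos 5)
Your construction and proof follow the paper's argument essentially verbatim: the same decomposition $u_0 = w_0^\perp + w_0^\parallel[w_0^\perp,\tilde g^{p,\lambda}] + \tilde w_0[w_0^\perp,\tilde g^{p,\lambda}]$, the same quadratic rescaling $\hat w^\perp(s)=\tfrac s\lambda (w_0^\perp)^+ + \tfrac{s^2}{\lambda^2}(w_0^\perp)^-$, the same definition of $\hat u_0(s)$ via the constraint maps evaluated at $\tilde g^{p,s}$, and the same invocation of Lemmas \ref{constrainslemma} and \ref{evenfunctions}. The only cosmetic difference is that you re-derive the $\mathcal O(s^2)$ odd-part bound directly from Taylor expansion and the evenness from Lemma \ref{evenfunctions}(1), whereas the paper cites Lemma \ref{evenfunctions}(2) as a black box — but that lemma is itself proved by precisely the Taylor argument you give, so the two are equivalent.
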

\begin{proof}
Using the direct decomposition (\ref{directdec3}) we may write $u_0=w_0^\perp+w_0^\parallel+\tilde w_0$ for unique $w_0^\perp\in X_0^\perp,\ w_0^\parallel\in X_0^\parallel$ and $\tilde w_0\in Y_0$. As $\|u_0\|_{C^{4,\gamma}}\leq M\lambda$ we may conclude $\|w^\perp_0\|_{C^{4,\gamma}}+\|w^\parallel_0\|_{C^{4,\gamma}}+\|\tilde w_0\|_{C^{4,\gamma}}\leq C(M)\lambda$ by using continuity of the projections. Since $u_0$ is admissible we may conclude that for small enough $\lambda_0(M)$ and $\lambda\leq \lambda_0(M)$ we have $w^\parallel_0=w^\parallel_0[w^\perp_0,\tilde g^{p,\lambda}]$ and $\tilde w_0=\tilde w_0[w^\perp_0,\tilde g^{p,\lambda}]$. Additionally, Lemma \ref{evenfunctions} implies $\|\left(w^\perp_0\right)^-\|_{C^{4,\gamma}}\leq C(M)\lambda^2$.
Combining these estimates we derive that the $C^\infty$-\,curve
$$\hat w^\perp_0:[0,\lambda]\rightarrow C^{4,\gamma}(\Sp^2_+),\ \hat w_0(s):=\frac s\lambda \left(w^\perp_0\right)^++\frac{s^2}{\lambda^2}\left(w^\perp_0\right)^-$$
has all derivatives with respect to $s$ bounded by a constant $C(M)$ independent of $\lambda$. Also note that $\|\hat w^\perp_0(s)\|_{C^{4,\gamma}}\leq C\|w^\perp_0\|_{C^{4,\gamma}}\leq C(M)\lambda$. Hence, for $\lambda$ small enough, we may define the curve 
$$\hat u_0:[0,\lambda]\rightarrow C^{4,\gamma}(\Sp^2_+),\ \hat u_0(s):=\hat w^\perp_0(s)+w_0^\parallel[\hat w^\perp_0(s),\tilde g^{p,s}]+\tilde w_0[\hat w^\perp_0(s),\tilde g^{p,s}].$$
Note that for $\Omega\in C^{13+2r}$ the map $s\mapsto \tilde g^{p,s}\in G_0^{8+r}$ is of class $C^r$ and that $w^\parallel_0$ and $\tilde w_0$ are both maps of class $C^r$. Hence $\hat u_0\in C^r([0,\lambda], C^{4,\gamma}(\Sp^2_+))$ and
\begin{equation}
    \|\hat u_0\|_{C^r([0,\lambda], C^{4,\gamma}(\Sp^2_+))}\leq C(M,\Omega,r)
\end{equation}
independent of $\lambda$ as this is the case for the curve $s\mapsto \hat w^\perp_0(s)$. The facts $\hat u_0(0)=0$ and $\hat u_0(\lambda)=u_0$ are trivial. Also note that by construction  $\hat u_0(s)\in \mathcal A^{C(M)}_{s, p}$. Indeed this follows from  the last  statement of Lemma \ref{evenfunctions}. Finally, $\hat u_0'(0)$ is even due to Lemma \ref{evenfunctions} as
    $$\hat u'_0(0)=\frac1\lambda\left(w_0^\perp\right)^++D_2w_0^\parallel[0,\delta]\frac{\partial\tilde g^{p,s}}{\partial s}\bigg|_{s=0}+D_2\tilde w_0[0,\delta]\frac{\partial\tilde g^{p,s}}{\partial s}\bigg|_{s=0}.$$

\end{proof}

Using the map $\hat u_0$ constructed in Lemma \ref{map} we may now prove the second part of Theorem \ref{theorem2}. 
\begin{theorem}
Let $\Omega\in C^{21}$ and $M>0$. There exists $\lambda_0(M)>0$ such that any solution $(u,\xi)$ to (\ref{system}) with $u(t)\in\mathcal A_{\xi(t),\lambda}^M$ for all times $t\geq 0$ satisfies
$$\sup_{t\geq 0}\left|\dot\xi(t)-\frac32\lambda^3\nabla H^S(\xi(t))\right|\leq C(M)\lambda^4.$$
\end{theorem}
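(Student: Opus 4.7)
The plan is to use the path $\hat u_t$ from Lemma \ref{map} to Taylor expand the driving quantity $I^i$ in the small parameter $\lambda$ and identify the nontrivial leading order. Fix $t\geq 0$ and set $a:=\xi(t)$. Applying Lemma \ref{map} to $u_0:=u(t)\in\mathcal{A}_{a,\lambda}^M$ yields a $C^r$-path $\hat u_t\colon[0,\lambda]\to C^{4,\gamma}(\Sp^2_+)$ with $\hat u_t(0)=0$, $\hat u_t(\lambda)=u(t)$, $\hat u_t(s)\in\mathcal{A}_{a,s}^{C(M)}$, even initial velocity $\hat u_t'(0)$, and $\|\hat u_t\|_{C^r}\leq C(M,\Omega)$ independently of $t$. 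Choosing $r=3$ (permitted by $\Omega\in C^{21}$), also $s\mapsto\tilde g^{a,s}\in G_0^{11}$ is a $C^3$-map with $\tilde g^{a,0}=\delta$, and therefore
\[
\varphi_t^i(s):=I^i\bigl[\hat u_t(s),\,\tilde g^{a,s}\bigr]
\]
lies in $C^3([0,\lambda])$ with $\|\varphi_t^i\|_{C^3}\leq C(M,\Omega)$ uniformly in $t$. The second equation in (\ref{system}) reads $\langle\dot\xi(t),b_i(\xi(t))\rangle=-\lambda\varphi_t^i(\lambda)$, so the theorem follows once we establish, uniformly in $t$,
\[
\varphi_t^i(0)=0,\quad (\varphi_t^i)'(0)=0,\quad (\varphi_t^i)''(0)=-3\langle\nabla H^S(a),b_i(a)\rangle,
\]
and invoke Taylor's theorem with $C^2$-bounded remainder, which contributes $O(\lambda^3)$ to $\varphi_t^i(\lambda)$ and hence $O(\lambda^4)$ to $\dot\xi(t)$.

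\textbf{Vanishing at orders zero and one.} The round half-sphere is Willmore-critical in $(\R^3,\delta)$, so $W[0,\delta]=0$ and hence $\varphi_t^i(0)=0$. For the first-order term, write $I^i=\langle W, P_H^\perp\nabla C^i\rangle_{L^2(f_u^*\tilde g)}$ and differentiate in $s$; every summand from the product rule carries the factor $W[0,\delta]=0$ except the one where $W$ itself is differentiated, giving
\[
(\varphi_t^i)'(0) = \Bigl\langle D_1W[0,\delta]\,\hat u_t'(0)+D_2W[0,\delta]\,\partial_s\tilde g^{a,s}\big|_{s=0},\; P_H^\perp\nabla C^i[0,\delta]\Bigr\rangle_{L^2(\Sp^2_+)}.
\]
At $(0,\delta)$, $H\equiv 2$ is $L^2$-orthogonal to $\omega^i$, so $P_H^\perp\nabla C^i[0,\delta]=\nabla C^i[0,\delta]=-\tfrac{3}{2\pi}\omega^i$ is odd. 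By Lemma \ref{map} $\hat u_t'(0)$ is even, and by Lemma \ref{parity2} the tensor $\partial_s\tilde g^{a,s}|_{s=0}$ is $T^*$-invariant, so again by Lemma \ref{parity2} the first argument of the inner product is an even function. Pairing even against odd on $\Sp^2_+$ yields $(\varphi_t^i)'(0)=0$.

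\textbf{Identification of the second-order coefficient.} This is the crux of the argument. I would compute $(\varphi_t^i)''(0)$ by invoking the Alessandroni--Kuwert energy expansion (\ref{introenegryformula}). Introduce
\[
w_t(a,s):=\mathcal W[f_{\hat u_t(s)},\,\tilde g^{a,s}],
\]
which by scale invariance of $\mathcal W$ equals the Willmore energy of the corresponding immersion in $(\R^3,\delta)$. Because $\hat u_t(s)\in\mathcal A_{a,s}^{C(M)}$, an adaptation of the AK expansion yields $w_t(a,s)=2\pi - s\pi H^S(a) + O(s^2)$ with constant depending only on $M$ and $\Omega$. Differentiating in $a$ along $b_i(a)$ at fixed $s$ gives $\partial_{b_i(a)}w_t(a,s)=-s\pi\langle\nabla H^S(a),b_i(a)\rangle + O(s^2)$. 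Translating this $a$-derivative back to $\varphi_t^i$ via the first-variation formula for $\mathcal W$, the barycenter constraint $C^i[\hat u_t(s),\tilde g^{a,s}]=0$, and the normalizations $\nabla C^i[0,\delta]=-\tfrac{3}{2\pi}\omega^i$ and $\|\omega^i\|_{L^2(\Sp^2_+)}^2=\tfrac{2\pi}{3}$, produces the required identity $(\varphi_t^i)''(0)=-3\langle\nabla H^S(a),b_i(a)\rangle$.

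\textbf{Expected main obstacle.} The parity trick that killed the first order is unavailable at the second, and the energy-expansion route requires the AK asymptotics to hold uniformly in $t$ over the full class $\mathcal A_{a,s}^{C(M)}$, not only at AK critical points. A fully direct alternative would have to control $D_1^2W$, $D_2^2W$ and $D_1D_2W$ at $(0,\delta)$, the second variation of $\nabla C^i$ and of the induced volume form, and the second-order correction produced by $P_H^\perp$ (itself $O(s^2)$ by the parity step), all uniformly in $t$. Uniformity ultimately rests on the $C^3$-bound from Lemma \ref{map}, which in turn hinges on the standing assumption $u(t)\in\mathcal A_{\xi(t),\lambda}^M$ for all $t\geq 0$.
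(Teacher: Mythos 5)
Your framework matches the paper's: pick the path $\hat u_t$ from Lemma \ref{map}, observe that the barycenter equation reads $\langle\dot\xi,b_i\rangle=-\lambda\,I^i[\hat u_t(\lambda),\tilde g^{a,\lambda}]$, Taylor-expand $s\mapsto I^i[\hat u_t(s),\tilde g^{a,s}]$ at $s=0$, kill the zeroth order via $W[0,\delta]=0$ and the first order via parity (Lemma \ref{parity2} plus evenness of $\hat u_t'(0)$), and control the remainder by the uniform $C^r$-bound of Lemma \ref{map}. Your version uses $\varphi_t^i=z$ expanded to second order with a $C^3$ remainder rather than the paper's $y_{t,i}=-sz$ to fourth order — a cosmetically cleaner formulation of the same estimate.

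The gap is in the identification $(\varphi_t^i)''(0)=-3\langle\nabla H^S(a),b_i(a)\rangle$, which you assert via the energy expansion (\ref{introenegryformula}) but do not prove. The expansion in \cite{AK} is derived at their critical points $\phi^{a,\lambda}$; you claim "an adaptation" gives $w_t(a,s)=2\pi-s\pi H^S(a)+O(s^2)$ uniformly over $\mathcal A_{a,s}^{C(M)}$, which is plausible but not established. More seriously, the phrase "translating this $a$-derivative back to $\varphi_t^i$ via the first-variation formula" glosses over the actual content: $\partial_{b_i(a)}w_t(a,s)$ is not even well-defined until you specify how $\hat u_t$ varies with $a$ (it is constructed from $u(t)$ using $\tilde g^{a,s}$, so it depends on $a$), and there is no clean first-variation identity relating an $a$-derivative of the energy to the specific $L^2$-pairing $\langle W,P_H^\perp\nabla C^i\rangle$ with the correct constants. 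The paper instead computes $z''(0)$ head-on in Appendix \ref{expansion}: it kills the mixed line in (\ref{lastline}) by parity, reduces to $\frac12\int\Delta(D_1H[0,\delta]u''(0)+D_2H[0,\delta]\tilde g''(0))\,(-\tfrac3{2\pi}\omega^i)$, evaluates the $\tilde g''(0)$ piece explicitly via (\ref{Hderivative}) and the formula for $\tilde g''(0)$, and handles the $u''(0)$ piece by integrating by parts and feeding the resulting boundary term through the second-order boundary linearization of Lemma \ref{boundarylin}. That boundary bookkeeping is exactly where the numerical coefficient $-3$ comes from and is the part missing from your sketch. You correctly flag in your last paragraph that a direct route needs all of this; but the proposal neither carries it out nor makes the energy shortcut rigorous, so the proof is incomplete precisely at the step that produces the theorem's constant.
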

\begin{proof}
By assumption $\Omega\in C^{13+2r}$ with $r:=4$. For $t\geq 0$ let $u_t(\omega):=u(t,\omega)$. Then, for any $t\geq 0$, we may define the map 
$\hat u_t:[0,\lambda]\rightarrow C^{4,\gamma}(\Sp^2_+)$ as in Lemma \ref{map} and get $\hat u_t(s)\in\mathcal A_{\xi(t),s}^{C(M)}$ for all $t\geq 0$. Now pick $t\geq 0$ and use Equation (\ref{system}) to write
$$\langle\dot\xi(t), b_i(t)\rangle=-\lambda I^i[u(t),\tilde g^{\xi(t),\lambda}]=-sI^i[\hat u_t(s),\tilde g^{\xi(t),s}]\bigg|_{s=0}^{s=\lambda}.$$
In the last step we have used $\hat u_t(\lambda)=u_t=u(t)$, $\hat u_t(0)=0$ and $I^i[0,\delta]=0$. Note:
\begin{enumerate}
    \item $I^i:C^{4,\gamma}(\Sp^2_+)\times G_0^{8+r}\rightarrow\R$ is of class $C^r$ on a small neighbourhood of $(0,\delta)$ as stated in Section \ref{section4}.
    \item $\hat u_t:[0,\lambda]\rightarrow C^{4,\gamma}(\Sp^2_+)$ is of class $C^r$ by Lemma \ref{map}.
    \item $s\mapsto \tilde g^{p,s}\in G_0^{8+r}$ is of class $C^r$ as we discussed in the proof of Corollary \ref{prescirbedcurve3}.
\end{enumerate}
Hence $y_{t,i}:[0,\lambda]\rightarrow \R,\ y_{t,i}(s):=-s I^i[\hat u_t(s),\tilde g^{\xi(t),s}]$ is of class $C^r\subset C^4$ and thus
$$y_{t,i}(\lambda)=y_{t,i}(0)+ y_{t,i}'(0)\lambda+\frac12y_{t,i}''(0)\lambda^2+\frac16 y_{t,i}'''(0)\lambda^3+\frac{\lambda^4}6\int_0^1y^{(4)}_{t,i}(\lambda\rho)(1-\rho)^3d\rho.$$
The last term can be bounded by $C(M)\lambda^4$ using Lemma \ref{map}. A direct computation in Appendix \ref{expansion} shows $y_{t,i}(0)=y_{t,i}'(0)=y_{t,i}''(0)=0$ and $y_{t,i}'''(0)=9\partial_i H^S(\xi(t))$ with $\partial_i$ taken in the chart $f[\xi(t),\cdot]$. Thus for all $t\geq 0$ 
$$\left|\langle\dot\xi(t),b_i(\xi(t))\rangle-\frac32\lambda^3\partial_i H^S(\xi(t))\right|\leq C(M)\lambda^4.$$
\end{proof}
This is Theorem \ref{theorem2}ii). Just recall that we rescaled time by a factor of $\lambda^4$ right before Equation (\ref{evolutionlaws}).

\section{Prove of Theorem 3}
We need the following uniqueness result of Alessandroni and Kuwert from \cite{AK}, Theorem 3:
\begin{theorem}\label{kuwertunique}
Let $\Omega\in C^{12}$ and $q\in S$ be a nondegenerate critical point of $H^S$. There exists $\lambda_0>0$, a neighbourhood $U$ of $q$ and a $C^1$ curve $\gamma:[0,\lambda_0]\rightarrow U$ satisfying $\gamma(0)=q$ such that for each $\lambda\in[0,\lambda_0)$ there exists a unique solution $\phi_\lambda\in \mathcal M^{4,\gamma}_\lambda(S)$ of (\ref{elliptic}) with area $2\pi\lambda^2$ and barycenter $C[\phi_\lambda]\in U$. The barycenter satisfies $C[\phi_\lambda]=\gamma(\lambda).$ 
\end{theorem}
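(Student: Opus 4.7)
The plan is to perform a Lyapunov--Schmidt reduction of the elliptic problem (\ref{elliptic}) near the round half-sphere of radius $\lambda$ centred at $q$. By Appendix \ref{Barycenter}, any $\phi\in\mathcal M^{4,\gamma}_\lambda(S)$ of area $2\pi\lambda^2$ whose barycenter lies in a small neighbourhood $U$ of $q$ is uniquely of the form $\phi=F^\lambda[p,f_u]$ with $p:=C[\phi]\in U$ and $u\in C^{4,\gamma}(\Sp^2_+)$ of small norm. Expressed in these coordinates, the elliptic problem (\ref{elliptic}) together with the area constraint $A[\phi]=2\pi\lambda^2$ and the barycenter identification $C[\phi]=p$ becomes a system for the pair $(u,p)$ in which $u$ must satisfy $B[u,\tilde g^{p,\lambda}]=0$, the normalisations $A[u,\tilde g^{p,\lambda}]=2\pi$, $C[u,\tilde g^{p,\lambda}]=0$, and the reduced PDE $P_H^\perp W[u,\tilde g^{p,\lambda}]=0$.

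For each $(p,\lambda)$ close to $(q,0)$ I would first solve the $u$-subproblem using the splitting $u=w^\perp+w^\parallel+\tilde w$ from Lemma \ref{constrainslemma}: the components $w^\parallel\in X_0^\parallel$ and $\tilde w\in Y_0$ are already fixed by the boundary, area and barycenter constraints, so only one equation on $w^\perp\in X_0^\perp$ is left. The linearisation of $P_H^\perp W$ at $(u,\tilde g)=(0,\delta)$ is $\tfrac12\Delta(\Delta+2)$, whose kernel on $X_0$ is $X_0^\parallel=\operatorname{span}_\R\{1,\omega^1,\omega^2\}$ and which is invertible on $X_0^\perp$ (its next eigenvalue produces the decay rate $e^{-6T}$ appearing throughout Section \ref{section4}). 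The implicit function theorem, exactly as in Lemmas \ref{boundaryvalueslem}--\ref{constrainslemma} and Theorem \ref{firstestimate}, then yields a unique small solution $u=u(p,\lambda)\in C^{4,\gamma}(\Sp^2_+)$ which depends in a $C^1$ manner on $(p,\lambda)$.

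Substituting this $u$ back leaves the $2$-dimensional bifurcation equation
\begin{equation*}
F^i(p,\lambda):=\bigl\langle W[u(p,\lambda),\tilde g^{p,\lambda}],\,\omega^i\bigr\rangle_{L^2(\Sp^2_+)}=0\qquad(i=1,2).
\end{equation*}
Following the energy expansion (\ref{introenegryformula}), together with a computation analogous to Appendix \ref{expansion} that differentiates (\ref{introenegryformula}) in the chart $f[p,\cdot]$, one obtains $F^i(p,\lambda)=c\,\lambda\,\partial_i H^S(p)+O(\lambda^2)$ for a fixed non-zero constant $c$. The main obstacle is precisely this degeneracy: the naive implicit function theorem at $(q,0)$ fails because $F$ vanishes identically at $\lambda=0$. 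The standard remedy is to divide through and define $G^i(p,\lambda):=\lambda^{-1}F^i(p,\lambda)$ for $\lambda>0$, extended continuously by $G^i(p,0):=c\,\partial_i H^S(p)$; provided one has enough regularity in $\lambda$ of the remainder---which is the technical heart of the argument and is the reason one needs $\Omega\in C^{12}$ together with careful $C^1$-tracking of $u(p,\lambda)$ in both variables (in the spirit of the curve $\hat u_0$ in Lemma \ref{map})---the map $G$ is $C^1$ near $(q,0)$. Since $q$ is a nondegenerate critical point, $D_pG(q,0)=c\,\nabla^2 H^S(q)$ is invertible, and a final application of the implicit function theorem produces the unique $C^1$ curve $\gamma:[0,\lambda_0)\to U$ with $\gamma(0)=q$ and $G^i(\gamma(\lambda),\lambda)=0$, equivalently $F^i(\gamma(\lambda),\lambda)=0$. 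The full uniqueness statement follows by concatenating the two implicit-function-theorem uniqueness clauses: any admissible $(u,p)$ with $p\in U$ and $u$ small must satisfy $u=u(p,\lambda)$ by Step 2 and then $p=\gamma(\lambda)$ by Step 3.
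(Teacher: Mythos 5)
The paper does not prove this theorem; it is quoted verbatim from Alessandroni--Kuwert \cite{AK}, Theorem~3, so there is no internal proof against which to compare your attempt. Your Lyapunov--Schmidt sketch is nevertheless consistent with the summary of the \cite{AK} construction given in Appendix~\ref{AKcritpoint}: first solve the $u$-subproblem with prescribed barycenter $p$ via the implicit function theorem (using that $\tfrac12\Delta(\Delta+2)$, the linearisation of $P_K^\perp W$ at $(0,\delta)$, is invertible on $X_0^\perp$ with kernel $X_0^\parallel$ inside $X_0$), and then pick $p$ by solving the two-dimensional reduced equation, using nondegeneracy of $q$ after dividing out the degeneracy in $\lambda$. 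The leading-order expansion $F^i(p,\lambda)=c\lambda\,\partial_iH^S(p)+O(\lambda^2)$ is in line with the energy expansion~(\ref{introenegryformula}) and the computation of $z''(0)$ in Appendix~\ref{expansion}.

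One precision worth flagging: you write that ``careful $C^1$-tracking of $u(p,\lambda)$ in both variables'' suffices to make $G^i=\lambda^{-1}F^i$ a $C^1$ map up to $\lambda=0$. That is not quite right. Writing $G^i(p,\lambda)=\int_0^1\partial_\lambda F^i(p,s\lambda)\,ds$ (which agrees with $\lambda^{-1}F^i$ for $\lambda>0$ since $F^i(p,0)=0$) shows that $G^i\in C^1$ requires $F^i\in C^2$, and hence $u(p,\lambda)\in C^2$ jointly in $(p,\lambda)$. This is exactly what the regularity hypothesis $\Omega\in C^{12}$ supplies (cf.\ the bookkeeping in Appendix~\ref{AKcritpoint}, where $u\in C^{l-4}$ in $\lambda$ with $l=m-1$ for $\Omega\in C^m$). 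With that corrected, the outline is sound; the remaining technical details live in \cite{AK}.
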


To deduce the convergence of the flow, we first prove that under the assumptions of Theorem \ref{theorem3} any barycenter curve must stabilise in a small neighbourhood of a critical point of $H^S$. For that purpose we introduce $U(a,r):=B_r(a)\cap S$ for $a\in S$ and $r>0$.

\begin{lemma}\label{stabilizelemma}
Let $\Omega\in C^{23}$ and assume that $H^S$ only has finitely many critical points $p_1,\hdots,p_m$ all of which are nondegenerate. Then for all $\epsilon_0>0$ small enough, there exists $\lambda_0(\epsilon_0)>0$ such that the following is true:\\

Let $\lambda\leq\lambda_0$, $\phi_0\in S(\lambda,\theta_0)$, denote the solution to the area preserving Willmore flow with initial value $\phi_0$ by $\phi(t)$ and its barycenter curve by $\xi(t)$. There exists a time $T=T(\phi_0,\lambda,\epsilon_0)$ and $1\leq i\leq m$ such that $\xi(t):=C[\phi(t)]\in U(p_i,\epsilon_0\lambda^{\frac14})$ for all $t\geq T$.
\end{lemma}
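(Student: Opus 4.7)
The plan is to combine the asymptotic ODE for the barycenter from Theorem \ref{theorem2}ii) with the monotone decay of the Willmore energy. First I would use Theorem \ref{theorem2}i) to reduce, after a time shift, to the case $\phi(t)\in\mathcal S^-(\lambda,K)$ for all $t\geq 0$, so that Theorem \ref{theorem2}ii) applies and gives
\[
\dot\xi(t)=\tfrac{3}{2\lambda}\nabla H^S(\xi(t))+E(t),\qquad |E(t)|\leq C(K).
\]
In parallel I would establish the uniform expansion $\mathcal W[\phi(t)]=2\pi-\lambda\pi H^S(\xi(t))+R(t)$ with $|R(t)|\leq C\lambda^2$, by Taylor expanding $\mathcal W[f_u,\tilde g^{\xi,\lambda}]$ around $(u,\tilde g)=(0,\delta)$ and using the $\mathcal S^-$-bound $\|u^-\|_{C^{4,\gamma}}\leq K\lambda^2$ together with the parity statements of Lemma \ref{parity2} to cancel the linear-in-$\lambda$ contribution. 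This is the along-the-flow analogue of the expansion (\ref{introenegryformula}) used for the AK critical points in Appendix \ref{AKcritpoint}.

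From the monotonicity of $\mathcal W[\phi(t)]$ and the expansion I then deduce the approximate monotonicity $H^S(\xi(t_2))\geq H^S(\xi(t_1))-\tilde K\lambda$ for all $t_1\leq t_2$. Setting $\tilde H_\infty:=\sup_{t\geq 0}H^S(\xi(t))\leq\max_S H^S$, a running-supremum argument yields a time $T_1(\phi_0,\lambda)$ past which $H^S(\xi(t))\in[\tilde H_\infty-\tilde K\lambda,\tilde H_\infty]$. The chain rule along the ODE, combined with Young's inequality to absorb $E\cdot\nabla H^S$, gives $\tfrac{d}{dt}H^S(\xi)\geq|\nabla H^S(\xi)|^2/\lambda$ whenever $|\nabla H^S(\xi)|\geq 2C\lambda$; meanwhile, the Morse lemma and the finiteness of the critical set yield $|\nabla H^S(p)|\geq c_0\epsilon_0\lambda^{1/4}$ for $p\notin V:=\bigcup_{i=1}^m U(p_i,\epsilon_0\lambda^{1/4})$ provided $\lambda\leq\lambda_0(\epsilon_0)$. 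Thus while $\xi(t)\in V^c$, $H^S\circ\xi$ grows at rate $\geq c_0^2\epsilon_0^2/\sqrt\lambda$. Since $H^S\circ\xi$ past $T_1$ is trapped in a band of width $\tilde K\lambda$, each maximal excursion of $\xi$ into $V^c$ has duration at most $\tilde K\lambda^{3/2}/(c_0^2\epsilon_0^2)$, and combining this with the observation that the running supremum $\tilde h(t):=\sup_{s\leq t}H^S(\xi(s))$ can only increase by $\tilde K\lambda$ past $T_1$, the total excursion time past $T_1$ is of the same order $O(\lambda^{3/2})$.

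To convert this into the pointwise conclusion of the lemma I would combine the measure bound with the speed estimate $|\dot\xi|\leq M/\lambda$ and the separation $\min_{i\neq j}d(p_i,p_j)\geq d_0>0$: any trajectory of $\xi$ linking $U(p_i,\epsilon_0\lambda^{1/4})$ to $U(p_j,\epsilon_0\lambda^{1/4})$ with $i\neq j$ must spend time at least $c(S)\lambda$ inside $V^c$, so the number of such transitions is at most $C\lambda^{1/2}/\epsilon_0^2$, which is strictly less than $1$ for $\lambda\leq\lambda_0(\epsilon_0)$. Consequently no transition between distinct components of $V$ occurs after some time $T$, and $\xi(t)$ lies in a single $U(p_i,\epsilon_0\lambda^{1/4})$ for all $t\geq T$ — the alternative scenario in which $\xi$ briefly exits and re-enters the same component is ruled out by the dynamical behaviour near each critical point (attractive near a local maximum, and repulsive along the unstable direction near a saddle or local minimum with escape rate $\Theta(1/\lambda)$).

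The hardest step will be the uniform derivation of the $\lambda^2$-expansion of $\mathcal W$ over all of $\mathcal S^-(\lambda,K)$: the definition of $\mathcal S^-$ is precisely tailored — with the odd part of $u$ confined to quadratic scale $\lambda^2$ — so that the parity cancellation of Lemma \ref{parity2} annihilates the otherwise linear-in-$\lambda$ term, but carrying this cancellation out uniformly along the flow (rather than only at the critical points, as in Appendix \ref{AKcritpoint}) will require carefully tracking the even/odd splitting of the graph function in the first variation of $\mathcal W$ and in the perturbation of the ambient metric $\tilde g^{\xi,\lambda}-\delta$.
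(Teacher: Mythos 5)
Your overall strategy (barycenter ODE from Theorem \ref{theorem2}ii) plus approximate monotonicity of $H^S\circ\xi$ plus finiteness of the critical set) matches the paper's, but your route to the key estimates diverges in ways that introduce gaps.

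First, you introduce the expansion $\mathcal W[\phi(t)]=2\pi-\lambda\pi H^S(\xi(t))+O(\lambda^2)$ \emph{uniformly along the flow} and use the monotonicity of $\mathcal W$ to deduce $H^S(\xi(t_2))\geq H^S(\xi(t_1))-\tilde K\lambda$. The paper does not prove this expansion away from critical points, and it is not needed: the paper passes to rescaled time $a(t):=\xi(\lambda t)$ and differentiates directly, obtaining
\[
\frac{d}{dt}H^S(a(t))\ \geq\ \tfrac32\,|\nabla H^S(a(t))|^2 - C\lambda
\]
from the ODE and Cauchy--Schwarz. This one-line estimate replaces both your expansion of $\mathcal W$ and your chain-rule computation, and avoids the delicate parity bookkeeping you flag as the hardest step.

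Second, your measure-theoretic bound on the total excursion time into $V^c$ has a genuine gap. You bound each \emph{individual} excursion by $O(\lambda^{3/2}/\epsilon_0^2)$, but the total excursion time is not of the same order: inside $V$ the function $H^S\circ\xi$ can \emph{decrease} (the drift $\nabla H^S\cdot E$ can be of size $\approx\epsilon_0^2\sqrt\lambda$ by the Morse lemma near $p_i$), so the trajectory can repeatedly lose during a stay inside $V$ the gain accrued during the preceding excursion, and the cumulative excursion time is then not controlled by the band width $\tilde K\lambda$. The paper sidesteps this by never counting excursion time; it compares the $H^S$-\emph{values at the critical points themselves} across successive visits. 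Because $|H^S(a(t_1))-H^S(p)|=O(\epsilon_0^2\lambda^{2/3})$ by the Morse lemma and the transition time between distinct critical-point neighbourhoods is bounded below by a $\lambda$-independent constant (in rescaled time), one obtains $H^S(q)-H^S(p)\geq C\lambda^{1/2}-C\epsilon_0^2\lambda^{2/3}>0$, a \emph{definite} increase each time a new neighbourhood is reached. Finitely many critical points then give stabilisation with no measure count.

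Third, your final paragraph rules out brief exits and re-entries into the same $U(p_i,\cdot)$ by appealing to the stable/unstable dynamics near $p_i$ with ``escape rate $\Theta(1/\lambda)$.'' This is a hand-wave: the error $E(t)$ in the ODE is only $O(1)$ and is not assumed to vanish near $p_i$, so it can dominate the linearised Morse dynamics in the region where $|\nabla H^S|=O(\epsilon_0\lambda^{1/4})$, and the usual invariant-manifold argument does not apply without further work. In the paper's framework, re-entries to the same neighbourhood are controlled by the same $H^S$-level comparison that handles transitions between distinct neighbourhoods, and no dynamical-systems argument near the critical point is needed.
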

\begin{proof}
For $h>0$ let $\mathcal N(\epsilon_0,h):=\bigcup_{i=1}^mU(p_i,\epsilon_0h)$ and $\mathcal N^c(\epsilon_0,h):=S\backslash \mathcal N(\epsilon_0,h)$. As $H^S$ only has finitely many critical points that are all nondegenerate we get $\inf_{\mathcal N^c(\epsilon_0,h)}|\nabla H^S|\geq \kappa_0\epsilon_0h$ for some $\kappa_0>0$, $h\leq 1$ and small enough values of $\epsilon_0$. Putting $a(t):=\xi(\lambda t)$ we may use Theorem \ref{theorem2}ii) for large enough times to estimate
\begin{equation}\label{hdecay}
\frac d{dt}H^S(a(t))=\nabla H^S(a(t))\left(\frac 32\nabla H^S(a(t))+R(t)\right)\geq \frac 32 |\nabla H^S(a(t))|^2-C\lambda.
\end{equation}

Let $t_n\uparrow\infty$. Then there exists a subsequence $(t_{n_k})$ so that $a(t_{n_k})\in \mathcal N(\epsilon_0,\lambda^{\frac13})$. Indeed if this was not true there would be a time $T_0>0$ with the property that $a(t)\in\mathcal N^c(\epsilon_0,\lambda^{\frac13})$ for all times $t\geq T_0$. Estimate (\ref{hdecay}) and the lower bound for $\nabla H^S$ on $\mathcal N^c(\epsilon_0,\lambda^{\frac13})$ give for small $\lambda(\epsilon_0)$ and $t\rightarrow\infty$
$$H(a(t))-H(a(T_0))=\int_{T_0}^t\frac d{ds} H^S(a(s))ds\geq \int_{T_0}^t \frac 32\kappa_0^2\epsilon_0^2\lambda^{\frac23} -C\lambda ds\rightarrow\infty$$
which is a contradiction. \\

Now let $p$ and $q$ be (not necessarily distinct) critical points of $H^S$ and let us examine what happens when $a$ travels from $a(t_1)\in U(p,\epsilon_0\lambda^{\frac13})$ to $a(t_2)\in U(q,\epsilon_0\lambda^{\frac13})$ when it exists $\mathcal N(\epsilon_0,\lambda^{\frac14})$ during its journey. We may choose times $\tilde t_1<\tilde t_2$ such that $a(\tilde t_1)\in \partial U(p,\epsilon\lambda^{\frac14})$ to $a(\tilde t_2)\in \partial U(q,\epsilon\lambda^{\frac14})$ as well as $a(s)\in \mathcal N^c(\epsilon_0,\lambda^{\frac14})$ for all times $s\in[\tilde t_1,\tilde t_2]$.
As there are only finitely many critical points we may bound $d(a(\tilde t_1),a(\tilde t_2))\geq C(S)>0$ for $\epsilon_0$ and $\lambda(\epsilon_0)$ small. Using Theorem \ref{theorem2} to bound $\dot a$ we conclude
\begin{equation}\label{timebound}
    0<C(S)\leq d(a(\tilde t_1),a(\tilde t_2))\leq \int_{\tilde t_1}^{\tilde t_2}|\dot a(s)|ds\leq\tilde  C(S,\phi_0)|\tilde t_2-\tilde t_1|
\end{equation}
It is easy to see that $|H(a(t_1))-H(p)|\leq C(S)\epsilon_0^2\lambda^{\frac23}$ and that for small enough $\lambda$ 
\begin{equation}\label{integralnegative} 
\int_{t_1}^{\tilde t_1}\nabla H(a(s))\dot a(s)ds\geq \int_{t_1}^{\tilde t_1}\frac 32|\nabla H(a(s))|^2-C\lambda ds\geq  \int_{t_1}^{\tilde t_1}\frac32\kappa_0^2\epsilon_0^2\lambda^{\frac23}-C\lambda ds\geq 0
\end{equation}
by Theorem \ref{theorem2}ii). Similar statements are true for $t_2$ and $\tilde t_2$. Using the Expansion for $\dot a$ from Theorem \ref{theorem2}ii) we compute
\begin{align*}
    H(q)-H(p)&\geq  H(a(t_2))-H(a(t_1))-C(S)\epsilon_0^2\lambda^{\frac23}\\
           & \overset{(\ref{integralnegative})}\geq  \int_{\tilde t_1}^{\tilde t_2}\nabla H(a(s))\dot a(s)ds-C(S)\epsilon_0^2\lambda^{\frac23}\\
          &  \geq \int_{\tilde t_1}^{\tilde t_2} \frac32\kappa_0^2\epsilon_0^2\lambda^{\frac12}-C\lambda ds-C(S)\epsilon_0^2\lambda^{\frac23}\\
            &\geq C(\epsilon_0,S)\lambda^{\frac12}|\tilde t_2-\tilde t_1|-C(S)\epsilon_0^2\lambda^{\frac23}
\end{align*}
In the second to last step we used the gradient estimate for $\nabla H^S$ on $\mathcal N^c(\epsilon_0, \lambda^{\frac14})$. Using (\ref{timebound}) we get $H(q)>H(p)$ for small enough $\lambda$ and as there are only finitely many critical points $a(t)$ must stabilise in $U(p,\epsilon_0\lambda^{\frac13})$ for some critical point $p$. 
\end{proof}

We now prove Theorem \ref{theorem3}
\begin{proof}
Let $\phi(t)$ be a flow line. Lemma \ref{stabilizelemma} implies that $\xi(t):=C[\phi(t)]$ must stabilize in a neighbourhood $U(p,\epsilon_0\lambda^{\frac14})$ of some critical point $p$ of $H^S$. Using Lemma \ref{subconvergence} choose $t_n\rightarrow \infty$ such that $\phi(t_n)$ converges to a critical point $\phi_\infty$ of the elliptic problem (\ref{elliptic}). Now let $\tau_n\rightarrow\infty$ be any other sequence and assume that $\phi(\tau_n)\not\rightarrow \phi_\infty$. Lemma \ref{subconvergence} implies that there exists a subsequence $\tau_{n_k}$ and a solution $\phi_\infty'\neq \phi_\infty$ of the elliptic problem \ref{elliptic} that lies in $C^{4,\beta}(S^2_+)$ such that $\phi(\tau_{n_k})\rightarrow\phi'_\infty$ in $C^{4,\beta}$. As $\phi'_\infty$ and $\phi_\infty$ both have their barycenter in $U(p,\epsilon_0\lambda^{\frac13})$ this contradicts Theorem \ref{kuwertunique} for small enough $\lambda$. The fact, that the limit $\phi_\infty$ is one of the solutions constructed in \cite{AK} also follows from the uniqueness result in Theorem \ref{kuwertunique}.
\end{proof}

\appendix
\renewcommand{\theequation}{\thesection.\arabic{equation}}
\section{Expansion}\label{expansion}
\setcounter{equation}0
For the duration of this section we will write $u(s):=\hat u_t(s)$, $p:=\xi(t)$, $\tilde g(s):=\tilde g^{p,s}$ and $d\mu[u,\tilde g]:=d\mu_{f_u^*\tilde g}$. We consider the map 
$$z(s):=I^i[u(s),\tilde g(s)]=\int_{\Sp^2_+}W[u(s),\tilde g(s)]P_{H[u(s),\tilde g(s)]}^\perp\nabla C^i[u(s),\tilde g(s)]d\mu[u(s),\tilde g(s)]$$
and prove that $z(0)=z'(0)=0$ and $z''(0)=-3\partial_i H^S(p)$. This implies the identities for $y_{t,i}(s)=-sz(s)$.\\

First we note that as $W[0,\delta]=0$, $H[0,\delta]=2$ and $\nabla C^i[0,\delta]=-\frac3{2\pi}\omega^i$ we may conclude $z(0)=0$ and
$$z'(0)=-\frac 3{2\pi}\int_{\Sp^2_+}\left(D_1 W[0,\delta]\frac{\partial u}{\partial s}\bigg|_{s=0}+D_2 W[0,\delta]\frac{\partial \tilde g}{\partial s}\bigg|_{s=0}\right)\omega^i d\mu_{\Sp^2}.$$
As $u'(0)$ is even $D_1 W[0,\delta]u'(0)=-\Delta(\Delta+2)u'(0)$ is also even and the first integral vanishes. For the second integral we recall Lemma \ref{parity2} to learn that  $D_2 W[0,\delta]\tilde g'(0)$ is an even function and deduce that the second integral also vanishes. It remains to compute $z''(0)$. Again using $W[0,\delta]=0$ we get
\begin{align}
    z''(0)=&\frac{d^2}{ds^2}\bigg|_{s=0}\int_{\Sp^2_+}W[s u'(0), \delta+s\tilde g'(0)]\left(P_{H}^\perp \nabla C^i\right)[su'(0),\delta+s\tilde g'(0)]d\mu[su'(0),\delta+s\tilde g'(0)]\nonumber\\
    &+\int_{\Sp^2_+}\left(D_1 W[0,\delta]u''(0)+D_2 W[0,\delta]\tilde g''(0)\right)\left(P_H^\perp\nabla C^i\right)[0,\delta]d\mu_{\Sp^2}.\label{lastline}
\end{align}
We claim that the first line vanishes.  First note that by Lemma \ref{parity2} the integrand in the first line is an odd function. We now need to check that the derivative of the measure at $s=0$ is even\footnote{As $W[0,\delta]=0$ one of the $s$-derivatives must act on the Willmore operator. Hence it suffices to check the first derivative of the measure.}. For this we use the following two formulas from \cite{AK} (see Lemma 1\footnote{Note that our variation is along the outer normal.} and the proof of Lemma 7)
\begin{align*} 
&D_1 d\mu[0,\delta]\varphi=2\varphi d\mu_{\Sp^2},\\
&D_2 d\mu[0,\delta]q=\frac12\tr_{\Sp^2}q.
\end{align*}
Inserting $\tilde g'(0)$ from the proof of Lemma \ref{parity2} we see that 
\begin{align*}
   &D_1 d\mu[0,\delta]u'(0)=2u'(0)\\
   &D_2 d\mu[0,\delta]\tilde g'(0)=\tr_{\R^3} \tilde g'(0)-g'(0)(\omega,\omega)=-2h_{ab}(p)\omega^ a\omega^ b\omega^3
\end{align*}
are both even. Thus the first line in Equation (\ref{lastline}) vanishes and we must only compute the last line. By definition
$$W[u,\tilde g]=\frac12\left(\Delta_g H+|h^ 0|^2H+\operatorname{Ric}^{\tilde g}(\tilde\nu,\tilde\nu)H\right).$$
As $\tilde g(s)$ is the pullback of a flat metric we may drop the last term. Next we note that $h^ 0[0,\delta]=0$ and hence $D_1 |h^ 0|^2[0,\delta]=0$ and similarly $D_2 |h^ 0|^2[0,\delta]=0$. Using $H[0,\delta]=2$ we get
$$\left(D_1\Delta[0,\delta]\varphi\right)H[0,\delta]=\left(D_2\Delta[0,\delta]q\right)H[0,\delta]=0$$
for arbitrary $\varphi$, $q$. Combining these considerations we deduce
\begin{equation}\label{fdoubleprimie0}
z''(0)=\frac12\int_{\Sp^2_+}\Delta\left(D_1 H[0,\delta]u''(0)+ D_2H[0,\delta]\tilde g''(0)]\right)\frac{-3}{2\pi}\omega^ id\mu_{\Sp^2}.
\end{equation}
Using \cite{AK} Lemma 7 and the standard result for the variation of $H$, we get
\begin{align} 
D_2 H[0,\delta]\varphi&=-(\Delta+|h^0|^2)\varphi=-(\Delta+2)\varphi\label{Hlinearization},\\
D_2 H[0,\delta]q&=-\frac12\tr_{\Sp^2}\nabla_{\tilde \nu} q+\tr_{\Sp^2}\nabla_\cdot q(\tilde \nu,\cdot)+q(\tilde \nu,\tilde \nu)-\tr_{\Sp^2}q.\label{Hderivative}
\end{align}
We apply (\ref{Hderivative}) to $q=\tilde g''(0)$. Denoting the second fundamental form of $S$ in the chart $f[p,\cdot]$ by $h_{ij}$ we can use Formula (\ref{metricformula}) to get ($*$-\,entries are to be inferred from symmetry)
$$\tilde g''(0)=\begin{bmatrix}
2h_{1a}h_{1b}x^ax^b & 2h_{1a}h_{2b}x^ax^b & \partial_1 h_{ab}x^ax^b\\
* & 2h_{2a}h_{2b}x^ax^b & \partial_2 h_{ab}x^ax^b\\
*& * &0
\end{bmatrix}.$$
Inserting into (\ref{Hderivative}) and repeatedly using $\tr_{\Sp^2}A=\tr_{\R^3}A-A(\omega,\omega)$ gives 
\begin{align}
\int_{\Sp^2_+}\Delta D_2 H[0,\delta]\tilde g''(0)\omega^id\mu_{\Sp^2}&=\int_{\Sp^2_+}\Delta\left(6\partial_a h_{kj}\omega^k\omega^j\omega^a\omega^3-2\partial_jh_{ja}\omega^a\omega^3\right)\omega^id\mu_{\Sp^2}\nonumber\\
&=-3\pi\partial_i H(p).\label{schauhier}   
\end{align}
Next we must consider 
\begin{equation}\label{intbyparts}
\int_{\Sp^2_+} \Delta D_1 H[0,\delta]u''(0) d\mu_{\Sp^2}=-\int_{\Sp^2_+}\Delta(\Delta+2)u''(0)\omega^id\mu_{\Sp^2}=\int_{\partial \Sp^2_+}\frac{\partial \Delta u''(0)}{\partial\eta}\omega^i d\mu_{\Sp^2}.
\end{equation}
To evaluate this integral we must exploit the boundary condition $B_0[u(s),\tilde g(s)]=0$ and differentiate twice.

\begin{lemma}\label{boundarylin}
The following identities hold:
\begin{align*} 
D_1 B[0,\delta]\varphi&=\left(\frac{\partial\varphi}{\partial\eta},-\frac{\partial}{\partial\eta}(\Delta+2)\varphi\right)\\
D_2 B[0,\delta]\tilde g''(0)&=\left(\partial_a h_{bc}\omega^ a\omega^ b\omega^ c, 10\partial_a h_{bc}\omega^a\omega^ b\omega^ c-2\partial_a h_{ab}\omega^b\right)
\end{align*}
\end{lemma}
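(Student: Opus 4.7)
The plan is to linearize each of the two components of
$$B[u,\tilde g]=\Bigl(\tilde g(\tilde\nu,\tilde\nu_{\mathbb R^2}),\ \partial_{\tilde\eta}H+H\tilde h^{\mathbb R^2}(\tilde\nu,\tilde\nu)\Bigr)$$
separately in $u$ and in $\tilde g$ at the base point $(u,\tilde g)=(0,\delta)$, where one has $\tilde\nu=-\omega$, $\tilde\eta=e_3$, $\tilde\nu_{\mathbb R^2}=e_3$, $H=2$ and $\tilde h^{\mathbb R^2}=0$.

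For the first identity $D_1B[0,\delta]\varphi$, I would first note that when only $u$ varies, the plane $\mathbb R^2\times\{0\}$ with its metric remains unchanged, so $\tilde\nu_{\mathbb R^2}\equiv e_3$ and $\tilde h^{\mathbb R^2}\equiv 0$. The standard variation of the unit normal for the graph $f_{\epsilon\varphi}=(1+\epsilon\varphi)\omega$ gives $\tilde\nu=-\omega+\epsilon\,\nabla_{\mathbb S^2}\varphi+O(\epsilon^2)$, and since $e_3$ is the conormal $\eta$ on $\partial\mathbb S^2_+$ one obtains $D_1[\delta(\tilde\nu,e_3)]\varphi=\partial_\eta\varphi$. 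For the second component, because $\tilde h^{\mathbb R^2}|_\delta=0$ the term $H\tilde h^{\mathbb R^2}(\tilde\nu,\tilde\nu)$ contributes nothing at first order in $\varphi$, and the remaining piece is $\partial_\eta\bigl(D_1H[0,\delta]\varphi\bigr)=-\partial_\eta(\Delta+2)\varphi$ by \eqref{Hlinearization}.

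For $D_2B[0,\delta]q$ with $q:=\tilde g''(0)$ I would exploit the explicit matrix form of $q$ recalled in the proof of Lemma \ref{parity2} (all entries depending only on $x^1,x^2$, with $q_{33}=0$ and $q_{i3}=\partial_ih_{ab}x^ax^b$). For the first component, the base value $\delta(-\omega,e_3)$ vanishes on $\partial\mathbb S^2_+$, so
$$D_2[\tilde g(\tilde\nu,\tilde\nu_{\mathbb R^2})]\,q=q(-\omega,e_3)+\delta(\partial_\epsilon\tilde\nu,e_3)+\delta(-\omega,\partial_\epsilon\tilde\nu_{\mathbb R^2}).$$
The perturbations of the two normals are determined by their unit-length and orthogonality conditions in $\delta+\epsilon q$, giving explicit expressions whose sum on $\partial\mathbb S^2_+$ (where $\omega^3=0$) collapses to $\partial_ah_{bc}\omega^a\omega^b\omega^c$.

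For the second component I would use \eqref{Hderivative} to compute $D_2H[0,\delta]q$ as a function on $\mathbb S^2_+$ and then apply $\partial_{e_3}$ on $\partial\mathbb S^2_+$; since $\tilde\nu=-\omega$ is tangent to $\mathbb R^2\times\{0\}$ at $\epsilon=0$ and $\tilde h^{\mathbb R^2}|_\delta=0$, the contribution of $H\tilde h^{\mathbb R^2}(\tilde\nu,\tilde\nu)$ reduces to $2\,(\partial_\epsilon\tilde h^{\mathbb R^2})|_{\epsilon=0}(-\omega,-\omega)$, and the variation of the second fundamental form of $\mathbb R^2\times\{0\}$ is computed via $\partial_\epsilon\tilde\Gamma^k_{ij}|_{\epsilon=0}=\tfrac12\delta^{kl}(\partial_iq_{jl}+\partial_jq_{il}-\partial_lq_{ij})$, which simplifies since $\partial_3q_{ij}=0$. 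Adding the two contributions and carefully collecting terms in $\partial_ah_{bc}$ and $\partial_ah_{ab}$ yields the claimed coefficient $10\partial_ah_{bc}\omega^a\omega^b\omega^c-2\partial_ah_{ab}\omega^b$.

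The main obstacle will be the bookkeeping in the last step: the four contributions to $D_2H[0,\delta]q$ in \eqref{Hderivative} and the subsequent $\partial_\eta$-derivative, together with the Christoffel variation entering $\partial_\epsilon\tilde h^{\mathbb R^2}$, each produce several cubic monomials in $\omega$ with coefficients built from $h_{ab}$, $\partial_ch_{ab}$ and their contractions; verifying that all contributions involving $h_{ab}h_{cd}$ cancel and that the remaining $\partial h$-coefficients combine into exactly $10$ and $-2$ is the delicate part, and will be handled by systematically using $\omega^3=0$ and the symmetries of $h$ and $q$.
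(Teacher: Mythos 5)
Your plan is essentially the same as the paper's: linearize each component of $B$ at $(u,\tilde g)=(0,\delta)$ using the explicit base values $\tilde\nu=-\omega$, $\tilde\eta=e_3$, $\tilde\nu_{\R^2}=e_3$, $H=2$, $\tilde h^{\R^2}=0$, feed in the variation formulas for the normal (Equation~\eqref{erivativesofnormals}) and for $H$ (Equations~\eqref{Hlinearization} and \eqref{Hderivative}), and compute the $\tilde h^{\R^2}$-contribution via the linearized Christoffel symbols. The only cosmetic difference is in the first component of $D_2B$: you split $\tilde g(\tilde\nu,\tilde\nu_{\R^2})$ as a trilinear product and differentiate all three factors, whereas the paper rewrites it as $\langle\tilde\nu,e_3\rangle/\sqrt{\mathfrak g^{33}}$ so that only $\partial_\mu\tilde\nu$ enters (the $\sqrt{\mathfrak g^{33}}$-derivative drops out since $\langle\tilde\nu,e_3\rangle=0$ on $\partial\Sp^2_+$); both give $q_{\mu3}\omega^\mu=\partial_a h_{bc}\omega^a\omega^b\omega^c$, but the paper's form avoids having to compute $\partial_\mu\tilde\nu_{\R^2}$ separately. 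One further remark that would de-risk the "delicate bookkeeping" you flag: the $h_{ab}h_{cd}$ terms cancel for a structural reason, not by accident --- they sit exclusively in $q_{ij}$ with $i,j\in\{1,2\}$, which is independent of $x^3$, and since $\partial_\eta$ followed by restriction to $\{\omega^3=0\}$ only retains terms carrying exactly one factor of $x^3$ (namely those from $q_{i3}=\partial_i h_{ab}x^ax^b$), the quadratic-in-$h$ pieces never survive. Recording that observation up front, as the paper does just before \eqref{appcomputation2}, turns the final coefficient count into a short calculation rather than a cancellation check.
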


Denote arbitrary even functions from $\Sp^2_+$ to $\R$ by $E$. Once Lemma \ref{boundarylin} is proven we may combine it with Lemma \ref{parity2} to get
\begin{align} 
0=&D_1 B_0[0,\delta]u''(0)+D_2 B_0[0,\delta]g''(0)+\frac {d^2}{ds^2}B_0[su'(0),\delta+ s\tilde g'(0)]\nonumber\\
=&\left(\frac{ u''(0)}{\partial\eta}+\partial_a h_{bc}\omega^ a\omega^ b\omega^ c+E, -\frac\partial{\partial\eta}(\Delta+2) u''(0)+10\partial_a h_{bc}\omega^a\omega^ b\omega^ c-2\partial_a h_{ab}\omega^b+E\right).\label{boundaryequs}
\end{align}
Combining both components of Equation (\ref{boundaryequs}) gives 
$$\frac{\partial\Delta u''(0)}{\partial\eta}=12\partial_a h_{bc}\omega^a\omega^b\omega^c-2\partial_a h_{ab}\omega^b+E.
$$
Inserting into Equation (\ref{intbyparts}) and combining with Equation (\ref{schauhier}) yields
$$\frac12\int_{\Sp^2_+}\Delta\left(D_1 H[0,\delta]u''(0)+ D_2H[0,\delta]\tilde g''(0)]\right)d\mu_{\Sp^2}=2\pi\partial_i H^S(p).$$
Multiplying with $-\frac 3{2\pi}$ and recalling Equation (\ref{fdoubleprimie0}) shows $z''(0)=-3\partial_i H^S(p)$. It remains to prove Lemma \ref{boundarylin}.

\begin{proof}
Let $\omega_0\in\partial \Sp^2_+$ and $\phi:U\subset\R^2\rightarrow\phi(U)\subset\Sp^2_+$ be a parameterization near $\omega_0$ so that $g_{ij}:=\langle\partial_i\phi,\partial_j\phi\rangle$ satisfies $g_{ij}(\omega_0)=\delta_{ij}$ and $\partial_k g_{ij}(\omega_0)=0$. Put $f_\epsilon(\omega):=(1+\epsilon \varphi(\omega))\omega$, $\mathfrak g(\mu):=\delta+\mu q$ with $q:=\tilde g''(0)$ and let $\tilde \nu(\epsilon,\mu)$ denote the inner normal of $f_\epsilon$ with respect to $\mathfrak g(\mu)$ (so $\tilde\nu(0,0)=-\omega$). Then at $\omega_0$
\begin{equation}\label{erivativesofnormals}
\frac d{d\epsilon}\bigg|_{\epsilon,\mu=0}\tilde\nu=\partial_i \varphi\partial_i\phi\hspace{.5cm}\text{and}\hspace{.5cm}\frac d{d\mu}\bigg|_{\epsilon,\mu=0}\tilde\nu=q(\phi,\partial_i\phi)\partial_i\phi+\frac12 q(\phi,\phi)\phi.
\end{equation}
The second formula is proven in \cite{AK} (see Lemma 7). To get the first one first note that $\langle\tilde\nu(\epsilon,0),\tilde\nu(\epsilon,0)\rangle=1$ implies that $\partial_\epsilon\tilde\nu(0,0)$ must be tangential. Hence
$$\frac{\partial\tilde\nu}{\partial\epsilon}\bigg|_{\epsilon,\mu=0}=\langle\partial_i\phi,\frac{\partial\tilde\nu}{\partial\epsilon}\bigg|_{\epsilon,\mu=0}\rangle\partial_i\phi=-\langle\tilde\nu(0,0),\frac{\partial}{\partial\epsilon}\bigg|_{\epsilon=\mu=0}\partial_i f_\epsilon\rangle\partial_i\phi=\partial_i\varphi\partial_i\phi,$$
where we used that $\tilde\nu(0,0)=-\phi$. For $\tilde g=\delta$ we have $\tilde\nu_{\R^2}=e_3$ and $\tilde h^{\R^2}=0$. Using Equations (\ref{Hlinearization}) and (\ref{erivativesofnormals}) we get
$$D_1 B_0[0,\delta]\varphi=\left(\langle \frac{\partial\tilde\nu}{\partial\epsilon}\bigg|_{\epsilon,\mu=0}, e_3\rangle, \frac\partial{\partial\eta}D_1 H[0,\delta]\varphi\right)=\left(\langle \nabla\omega^3,\nabla\varphi\rangle, -\frac{\partial(\Delta+2)\varphi}{\partial\eta}\right)$$
 at $\omega_0$. As $\nabla\omega^3=e_3=\eta$ the first identity follows.\\

 To establish the second formula we use Lemma 3 from \cite{AK}, $H[0,\delta]=2$, $\tilde\nu(0,0)=-\phi$ and note $\tilde h^{\R^2}_{ij}\equiv 0$ for $\mu=0$ to get 
\begin{equation}\label{appcomputation0} D_2B[0,\delta]\mathfrak g'(0)=\left(\frac d{d\mu}\bigg|_{\mu=0}\frac{\langle \tilde \nu(0,\mu), e_3\rangle}{\sqrt{\mathfrak g(\mu)^{33}}},\frac{\partial}{\partial\eta}D_2 H[0,\delta]\mathfrak g'(0)+2\frac{d\tilde h^{\R^2}}{\partial\mu}\bigg|_{\mu=0}(\phi,\phi) \right).
\end{equation}
 $\tilde \nu(0,0)=-\omega\perp e_3$ and $\mathfrak g'(0)=q$. Using Equation (\ref{erivativesofnormals}) then gives
\begin{equation}\label{appcomputation1}
     \frac d{d\mu}\bigg|_{\mu=0}\frac{\langle \tilde\nu(0,\mu), e_3\rangle}{\sqrt{\mathfrak g(\mu)^{33}}}= q(\phi,\partial_i\phi)\partial_i\phi^3= q_{\mu\nu}\omega^\mu\langle\nabla\omega^\nu,\nabla\omega^3\rangle=q_{\mu 3}\omega^\mu
 \end{equation}
 at $\omega_0$. As $q(t\omega)=t^2 q(\omega)$ we learn $\nabla_{\tilde\nu} q=-2q$. Using Equation (\ref{Hderivative}) then gives $D_2 H[0,\delta] q=3q(\tilde\nu,\tilde\nu)+\tr_{\R^3}\nabla_\cdot q(\tilde\nu,\cdot)$. Inserting $\tilde\nu=-\omega$ we may write $\tr_{\R^3}\nabla_\cdot q(x,\cdot)=\partial_\mu q_{\mu\nu} x^\nu$ and $q(\tilde\nu,\tilde\nu)=q_{\mu\nu}x^\mu x^\nu$. As we take $\partial_\eta$ we only need terms that contain precisely one $x^3$. Hence at $\omega_0$
\begin{equation}\label{appcomputation2}D_2\left(\frac {\partial H}{\partial\tilde\eta}\right)[\delta] q=\frac{\partial}{\partial\eta}\left(6\partial_ a h_{ij}\omega^ i\omega^ j\omega^ a\omega^ 3-2\partial_jh_{ja} \omega^a\omega^3\right)=6\partial_ a h_{ij}\omega^ i\omega^ j\omega^ a-2\partial_jh_{ja} \omega^a.
\end{equation}
Finally, we must linearize $\tilde h^{\R^2}_{ij}=\mathfrak g_{\alpha\beta}\Gamma^\alpha_{\ ij}\tilde\nu_{\R^2}^ \beta$. As $\Gamma^ \alpha_{\ ij}$ vanishes for $\mu=0$ we must only take the derivatives of the Christoffel symbols into account. Using $\partial_3 q=0$ an easy computation then shows
\begin{equation}\label{appcomputation3}
D_2 \left(H\tilde h^{\R^2}(\tilde\nu,\tilde\nu)\right)q=2\omega^ i\omega^ jD \Gamma^ 3_{\ ij}[\delta]q=(2\partial_i q_{3j}-\partial_3 q_{ij})\omega^ i\omega^ j=4\partial_a h_{ij}\omega^ i\omega^ j\omega^ a.
\end{equation}
Equations (\ref{appcomputation0}), (\ref{appcomputation1}), (\ref{appcomputation2}) and (\ref{appcomputation3}) imply the second formula in the Lemma.
\end{proof}

\section{Solution of the elliptic problem}\label{AKcritpoint}
Alessandroni and Kuwert \cite{AK} study the elliptic problem (\ref{elliptic}) for $\phi\in\mathcal S(\lambda,\theta)$ by making the ansatz 
$$\phi^{p,\lambda}=F^\lambda[p, f_u].$$
For given $p\in S$ they first derive a solution to the elliptic problem with prescribed barycenter $C[\phi]=p$. This is achieved by applying the diffeomorphism $F^\lambda[p,\cdot]$ and studying the Equation in $\R^3$ with the background metric $\tilde g=\tilde g^{p,\lambda}$:
$$\left\{\begin{aligned}
    &P_{K[u,\tilde g]}^\perp W[u,\tilde g]=0,\\
    &B_0[u,\tilde g]=0,\\
    &C[u,\tilde g]=0,\\
    &A[u,\tilde g]=2\pi.
\end{aligned}\right.$$
It is shown\footnote{See Lemma 6 and Proposition 1 in their paper. Also note the varying definitions of $B$.} that the unique solution $u(\lambda)\in C^{4,\gamma}(\Sp^2_+)$ depends regularly on $\lambda$. In fact, it is shown that for $\Omega\in C^m$ and $l=m-1\geq 6$ we have $\tilde g^{p,\lambda}\in G_0^l(\delta;\sigma_0)$ for $\lambda\leq \lambda_0(\Omega,\sigma_0)$ and that $u\in C^{l-4}([0,\lambda_1), C^{4,\gamma}(\Sp^2_+))$ for some small $\lambda_1(\Omega)>0$  . Putting $\varphi:=u'(0)$ and using Lemma \ref{parity2} we see that 
$$\left\{\begin{aligned}
    &\Delta(\Delta+2)\varphi=\textrm{even},\\
    &\frac{\partial\varphi}{\partial\eta},\frac{\partial\Delta\varphi}{\partial\eta}=\textrm{even},\\
    &-\frac3{2\pi}\int_{\Sp^2_+}\varphi \omega^i=0,\\
    &2\int_{\Sp^2_+}\varphi=-\frac d{d\lambda}\bigg|_{\lambda=0}A[0, \tilde g^{p,\lambda}].
\end{aligned}\right.$$
The uniqueness of this linear problem implies that $\varphi$ is even and hence $\|u^-\|_{C^{4,\gamma}}\leq C\lambda^2$.  Thus $\phi^{p,\lambda}\in \mathcal S^-(\lambda, K)$ for suitable $K$. Finally, in Section 3 of their paper a suitable choice for $p$ is derived that makes $\phi^{p,\lambda}$ a solution of (\ref{elliptic}).

\section{Parabolic Schauder Theory and regularity of meta maps}\label{schauder}
\setcounter{equation}0
The following definitions are taken from \cite{eidelman}. Let $\Omega\subset\R^n$ be a bounded domain, $T>0$ and $\Omega_T:=[0,T]\times \Omega$. For $(l,\alpha)\in\N_0\times\N_0^n$ we abbreviate $D^{l,\alpha} u(t,x):=\partial_t^l D_x^\alpha u(t,x)$. For $m\in\N_0$ let 
\begin{align*} 
&C^m(\bar \Omega_T):=\set{u:\bar\Omega_T\rightarrow\R\ |\ \textrm{$D^{l,\alpha }u$ is defined in $\operatorname{int}\Omega_T$ for $4l+|\alpha|\leq m$  and continuous on $\bar\Omega_T$}}.\\
&\|u\|_{C^{m}(\bar\Omega_T)}:=\sum_{4l+|\alpha|\leq m}\|D^{l,\alpha}\|_{C^0(\bar\Omega_T)}.
\end{align*}
For $u\in C^m(\bar\Omega_T)$ and $\gamma\in(0,1)$ we define the \emph{temporal} and \emph{spatial}-Hölder seminorms
\begin{align*} 
[u]_{\gamma}^s:&=\sup_{4l+|\alpha|= m}\sup_{(x,t)\neq (y,t)}\frac{|D^{l,\alpha}u(x,t)-D^{l,\alpha} u(y,t)|}{|x-y|^{\gamma}},\\
[u]_{\gamma}^t:&=\sup_{0<m+\gamma-4l-|\alpha|\leq 4}\sup_{(x,t)\neq (x,s)}\frac{|D^{l,\alpha}u(x,t)-D^{l,\alpha} u(x,s)|}{|s-t|^{\frac{m+\gamma-4l-|\alpha|}4}}
\end{align*}
and put $\|u\|_{C^{m,\gamma}(\bar\Omega_T)}=\|u\|_{C^{m}(\bar\Omega_T)}+[u]_{\gamma}^s+[u]_{\gamma}^t$. Then the parabolic Hölder space $C^{m,\gamma}(\bar\Omega_T)$ is defined as
$$C^{m,\gamma}(\bar\Omega_T):=\set{u\in C^{m}(\bar\Omega_T)\ |\ \|u\|_{C^{m,\gamma}(\bar\Omega_T)}<\infty}.$$
We also refer to this space as $C^{m,[\frac m4]^-,\gamma}$ where $[\cdot]^-$ denotes the floor function. The definition of boundary spaces such as $C^{3,0,\gamma}([0,T]\times \partial \Omega)$ is analogue and also covered in \cite{eidelman}. Lifting the definitions onto a manifold $M$ works as usual and is e.g. covered in \cite{eidelman} for the case when $M=\partial U$ for a sufficiently regular domain $U$.\\

\paragraph{Improved Schauder estimates}\ \\
All Schauder theory except for the decay estimate used e.g. in Theorem \ref{firstestimate} are standard and may be derived by following Simon's scaling argument \cite{simon}. To prove the decay consider a function $\varphi\in X_T$ satisfying $\varphi(t,\cdot)\in X_0^\perp$ (recall Equation (\ref{directdec3})) for all $t\in[0,T]$ and  
\begin{equation}\label{coercivity}
\left\{\begin{aligned}
\dot\varphi+\frac12\Delta(\Delta+2)\varphi&=0\hspace{.2cm}\text{on $[0,T]\times \Sp^2_+$}\\
\varphi(0,\cdot)&=\psi_0\hspace{.2cm}\text{on $\set 0\times \Sp^2_+$}.
\end{aligned}\right.
\end{equation}
Following Lemma 4 in \cite{AK} we see that for all $u_0\in X_0^\perp $ we have $$\int_{\Sp^2_+} u_0\Delta(\Delta+2) u_0d\mu_{\Sp^2}\geq \lambda_2(\lambda_2-2)\int_{\Sp^2_+}u_0^2=24\int_{\Sp^2_+}u_0^2d\mu_{\Sp^2}$$
where $\lambda_2=6$ is the second non-zero eigenvalue of $-\Delta$.  Let $\lambda':=24$, pick $\mu\in(0,\frac{\lambda'}2)$ and define $\varphi_\mu(t,\omega):=e^{\mu t}\varphi(t,\omega)$. Then  $\varphi_\mu$ has zero boundary conditions along $\partial \Sp^2_+$ and solves
$$\left\{\begin{aligned}
\dot\varphi_\mu+\frac12\Delta(\Delta+2)\varphi_\mu-\mu\varphi_\mu&=0\hspace{.2cm}\text{on $[0,T]\times \Sp^2_+$},\\
\varphi_\mu(0,\cdot)&=\psi_0\hspace{.2cm}\text{on $\set 0\times \Sp^2_+$}.
\end{aligned}\right.$$
Standard Schauder theory then gives 
\begin{equation}\label{improvedschauder}
\|\varphi_\mu\|_{C^{4,1,\gamma}_T}\leq C(\Sp^2_+,\gamma,\mu)T^N\left(\|\psi_0\|_{C^{4,\gamma}}+\sup_{t\in[0,T]}\|\varphi_\mu(t)\|_{L^2(\Sp^2_+)}\right)
\end{equation}
for some $N=N(\Sp^2_+,\gamma)\in \N$. The scaling of the Schauder constant with $T$ follows by scaling arguments. Indeed, an estimate of the form (\ref{improvedschauder}) is usually derived and a suitable $L^2$-estimate inserted to derive the usual Schauder estimate. Using the fact that $\varphi(t,\cdot)\in X_0^\perp$ for all $t\in [0,T]$  and remembering (\ref{coercivity}) we compute
\begin{align*} 
\frac d{dt}\frac12\int_{\Sp^2_+} \varphi_\mu^2d\mu_{\Sp^2}&=-\frac12\int_{\Sp^2_+} \varphi_\mu\Delta(\Delta+2)\varphi_\mu d\mu_{\Sp^2}+\mu\int_{\Sp^2_+} \varphi_\mu^2d\mu_{\Sp^2}\\
&\leq \left(-\frac{\lambda'}2+\mu\right)\int_{\Sp^2_+}\varphi_\mu^2d\mu_{\Sp^2}\leq 0.
\end{align*}
This implies $\sup_{t\in[0,T]}\|\varphi_\mu(t,\cdot)\|_{L^2(\Sp^2_+)}^2=\|\psi_0\|_{L^2(\Sp^2_+)}^2$ and therefore 
$$e^{\mu T}\|\varphi(T,\cdot)\|_{C^{4,\gamma}}=\|\varphi_\mu(T,\cdot)\|_{C^{4,\gamma}}\leq \|\varphi_\mu\|_{C^{4,1,\gamma}_T}\leq C(\Sp^2_+,\gamma)T^N\|\psi_0\|_{C^{4,\gamma}}.$$
Choose $\mu:=\frac13\lambda'$ and note $e^{-\frac{\lambda'}{12}}T^{N(\Sp^2_+,\gamma)}\leq C(\Sp^2_+,\gamma)$. Thus 
$$\|\varphi(T)\|_{C^{4,\gamma}}\leq C(\Sp^2_+,\gamma)e^{-\frac{\lambda'}4T}\|\psi_0\|_{C^{4,\gamma}}=C(\Sp^2_+,\gamma)e^{-6T}\|\psi_0\|_{C^{4,\gamma}}.$$

\paragraph{Regularity of meta maps}\ \\
Let $U$ be a bounded domain and $V$ be a bounded convex domain. Put $A:= C^{0,\frac\gamma4}([0,T], C^l(V,\R))$, $B:= C^{0,0,\gamma}([0,T]\times U, V)$ and $C:=C^{0,0,\gamma}([0,T]\times U)$. For $g\in A$ and $f\in B$ put $(g\diamond f)(t,x):=g(t,f(t,x))$ and consider the meta map
$$T: A\times B\rightarrow C,\ (g,f)\mapsto g\diamond f.$$
It is easy to see that $T$ is well defined if $l\geq 1$. $T$ is even continuous if $l\geq 2$. As an example we show that  $[g\diamond f-g\diamond\tilde f]^t_\gamma$ is small if $f\approx\tilde f$. Indeed
\begin{align*}
\Delta:=&|g(t,f(t,x))-g(t,\tilde f(t,x))-g(s,f(s,x))+g(s,\tilde f(s,x))|\\
    \leq & |g(t,f(t,x))-g(t,\tilde f(t,x))-g(t,f(s,x))+g(t,\tilde f(s,x))|\\
       & +|g(t,f(s,x))-g(t,\tilde f(s,x))-g(s,f(s,x))+g(s,\tilde f(s,x))|\\
       \leq & |g(t, \lambda f(t,x)+(1-\lambda) \tilde f(t,x))-g(t, \lambda f(s,x)+(1-\lambda) \tilde f(s,x))\big|_{\lambda=0}^{\lambda=1}|\\
    & +|g(t, \lambda f(s,x)+(1-\lambda) \tilde f(s,x))-g(s, \lambda f(s,x)+(1-\lambda) \tilde f(s,x))\big|_{\lambda=0}^{\lambda=1}|.
    \end{align*}
Convexity of $V$ and the chain rule then readily imply
$$\Delta\leq \left(\|D_2 g\|_{C^0}[f-\tilde f]^t_\gamma+\|D_2^2 g\|_{C^0}\|f-\tilde f\|_{C^0}([f]^t_\gamma+[\tilde f]^t_\gamma)+[D_2 g]^t_\gamma\|f-\tilde f\|_{C^0}\right)|t-s|^{\frac\gamma4},$$
which shows the continuity of $T$. Similarly one shows that for $T\in C^k$ it suffices if $l\geq 2+k $ as differentiating $g\diamond f$ with respect to $f$ produces $Dg\diamond f$ which must be continuous in $f$. Similar arguments allow one so study $g\diamond f \in C^{4,1,\gamma}([0,T]\times U)$ for $f\in C^{4,1,\gamma}([0,T]\times U, V)$. The corresponding meta map is of class $C^k$ as long as $l\geq 6+k$ as four additional derivatives of $g$ are required.

\section{The Riemannian barycenter}\label{Barycenter}
\setcounter{equation}0
Let $\tilde g\in G_0^l(\delta;\epsilon)$ with $l\geq 2$. This appendix serves the purpose of constructing the Riemannian barycenter of the immersion $f_u:\Sp^2_+\rightarrow (\R^3,\tilde g)$ for small enough $u$. The analysis follows the construction in \cite{AK} but changes the euclidean projection $\pi_{\R^2}$ used in \cite{AK} to the Riemannian projection $\pi_{\R^2}[\tilde g,\cdot ]$. We recall 
$D_r:=\set{x\in\R^2\ |\ |x|< r}$ and $Z_r:=\bar D_r\times[-r,r]$.
\begin{lemma}
For $\epsilon>0$ small enough the following is true: For each $p\in Z_{\frac32}$ there exists a unique $x\in D_2$, denoted by $\pi_{\R^2}[\tilde g, p]$, such that $\tilde g_x(p-x,\R^2)=0$. The map $\pi_{\R^2}:G_0^l(\delta;\epsilon)\times Z_{\frac32}\rightarrow D_2,\ (\tilde g, p)\mapsto x$ is of class $C^{l}$.
\end{lemma}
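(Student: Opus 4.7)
The plan is to reformulate the orthogonality condition as a zero of a suitable map and apply the implicit function theorem. Concretely, I would define
\[
F: G_0^l(\delta;\epsilon)\times Z_{3/2}\times D_2\to\R^2,\qquad F[\tilde g,p,x]:=\bigl(\tilde g_{(x^1,x^2,0)}(p-(x^1,x^2,0),e_i)\bigr)_{i=1,2},
\]
identifying $x=(x^1,x^2)\in D_2$ with $(x^1,x^2,0)\in\R^3$ whenever it is fed into $\tilde g$ or compared with $p$. By the meta-map discussion in Appendix \ref{schauder}, the evaluation map $C^l(Z_2,M_3^{\mathrm{sym}}(\R))\times Z_2\ni(\tilde g,y)\mapsto \tilde g_y$ is of class $C^l$, and the remaining dependence on $(p,x)$ is polynomial, so $F$ is of class $C^l$. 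At $\tilde g=\delta$ one has $F[\delta,p,x]=(p^1-x^1,p^2-x^2)$, whose unique zero is $x=(p^1,p^2)\in\bar D_{3/2}$.

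Next I would write $F[\tilde g,p,x]=(p^1-x^1,p^2-x^2)+R[\tilde g,p,x]$ with $R[\tilde g,p,x]:=F[\tilde g,p,x]-F[\delta,p,x]$. Since $\tilde g\mapsto F[\tilde g,p,x]$ is linear and $\|\tilde g-\delta\|_{G_0^l}\leq\epsilon$, the fundamental theorem of calculus delivers the workhorse uniform estimate
\[
\|R\|_{C^0(Z_{3/2}\times D_2)}+\|D_x R\|_{C^0(Z_{3/2}\times D_2)}\leq C\epsilon.
\]
Existence and uniqueness of $x\in D_2$ then follow from a Banach fixed point argument: for $\epsilon$ small, the map $x\mapsto (p^1,p^2)+R[\tilde g,p,x]$ sends $\bar D_{7/4}$ into itself (since $(p^1,p^2)\in \bar D_{3/2}$ and $\|R\|\leq C\epsilon$) and is a contraction with Lipschitz constant $\leq C\epsilon<1$, producing a unique fixed point. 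The same estimate, together with the form of $F$, rules out any further zeros of $F[\tilde g,p,\cdot]$ in $D_2$, giving the desired map $\pi_{\R^2}[\tilde g,p]$.

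For the regularity statement I would apply the implicit function theorem locally at each triple $(\tilde g_0,p_0,\pi_{\R^2}[\tilde g_0,p_0])$. The above estimate shows that $D_3F[\tilde g,p,x]$ is a $C\epsilon$-perturbation of $-I_2$ in operator norm and therefore uniformly invertible, while $F\in C^l$; hence the implicit function theorem yields a $C^l$ local solution which, by the global uniqueness established above, must coincide with $\pi_{\R^2}$. The only real bookkeeping issue is to confirm that the evaluation meta map is genuinely of class $C^l$ jointly in $(\tilde g,y)$ with the claimed loss of derivatives; this is the main technical point, but it is precisely what the appendix on meta maps is designed to supply. Everything else is a clean application of standard perturbation theory.
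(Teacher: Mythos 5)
Your proof is correct and follows essentially the same route as the paper: the paper also reformulates the orthogonality condition as a zero of the map $(\tilde g,p,q)\mapsto\tilde g_q(p-q,e_i)e_i$, establishes existence and uniqueness via a contraction on $D_2$ (your fixed-point map $x\mapsto(p^1,p^2)+R[\tilde g,p,x]$ is precisely the paper's $\Phi$), and derives the $C^l$-regularity from the implicit function theorem using that $D_x$ of this map is a small perturbation of $-\operatorname{id}_{\R^2}$. The only cosmetic difference is that the paper verifies $\tilde\Phi\in C^l$ directly rather than via the composition meta-map result, which is anyway tailored to compositions $g\circ f$ with Hölder $f$ rather than simple evaluation.
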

\begin{proof}
First we prove uniqueness. Fix a point $p=(\vec p, p^3)\in Z_{\frac32}$ and consider the map
$$\Phi:D_2(0)\subset \R^2\rightarrow \R^2,\ f(q):=\vec p-(\delta-\tilde g_q)(p-q,e_i)e_i.$$
For small $\epsilon>0$ it is easy to show that $\Phi:D_2\rightarrow D_2 $ defines a contraction which implies the existence and uniqueness. To check the regularity we consider the $C^ l$-map $\tilde \Phi:Z_{\frac32}\times D_2\times G_\epsilon\rightarrow \R^2,\ \tilde f(p, q, \tilde g):=\tilde g_q(p-q,e_i)e_i$. Given any $p_0=(\vec p_0,p^3_0)\in Z_{\frac32}$ we note $\tilde \Phi(p_0,\vec p_0,\delta)=0$ and $D_2\tilde \Phi(p_0,\vec p_0,\delta)=-\operatorname{id}_{\R^2}$. The regularity of the local and hence global inverse follows from the implicit function theorem. 
\end{proof}
We briefly motivate the modified definition of the Riemannian barycenter. For an immersion $f_u:\Sp^2_+\rightarrow (\R^3,\tilde g)$ we wish to define the Riemannian barycenter $x\in\R^2$ by the implicit equation 
$$I:=\avint_{\Sp^2_+}\left(\exp^{\tilde g}_x\right)^{-1}(f_u(\omega))d\mu_g(\omega)\overset!\perp_{\tilde g_x}\R^2.$$
Note that by definition $I\in T_x\R^3$ and hence it is sensible to demand orthogonality with respect to $\tilde g_x$. We may reformulate the condition as $\tilde g_x(x+I-x,\R^2)=0$ which implies $\pi_{\R^2}[\tilde g, x+I]=x$. Hence we study zeros of the map
$$X:U_\epsilon\times G_\epsilon\times D_\epsilon\rightarrow\R^2,\ X[u,\tilde g, x]:=\pi_{\R^2}\left[\tilde g,x+\avint_{\Sp^2_+}\left(\exp^{\tilde g}_x\right)^{-1}(f_u(\omega))d\mu_g(\omega)\right]-x,$$
which reduces to $X'$ (we include the prime to distinguish the map from \cite{AK} to the one studied here) from \cite{AK} if we replace $\pi_{\R^2}[\tilde g,\cdot]$ with $\pi_{\R^2}[\delta,\cdot]$ as is used there. In particular both maps are identical if $\tilde g=\delta$ is inserted. Repeating the analysis from \cite{AK} we get the following Theorem: 

\begin{theorem}[The two dimensional barycenter]\label{barycenterexi}\ \\
There exist $\epsilon>0$ and $\rho>0$ such that for $u\in C^{1}(\Sp^2_+,\R)$ and $\tilde g\in C^2(Z_2,M_3(\R))$ satisfying $\|u\|_{C^1}<\epsilon$ and $\|\tilde g-\delta\|_{C^2}<\epsilon$ there exists a unique point $x=C[u,\tilde g]\in D_\rho(0)\subset\R^2$ such that $X[u,\tilde g, C[u,\tilde g]]=0$ and 
$$|x|\leq C(\|u\|_{C^{1}(\Sp^2_+)}+\|\tilde g-\delta\|_{C^2}).$$
As map from $G_0^l\times C^{4,\gamma}(\Sp^2_+)\rightarrow \R^2$ the map $C$ is of class $C^{l-1}$. 
\end{theorem}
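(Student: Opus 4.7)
}

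The plan is to apply the implicit function theorem to $X[u,\tilde g,x]$ at the reference point $(u,\tilde g,x)=(0,\delta,0)$, just as in the derivation of the preceding Lemma. First I would check $X[0,\delta,0]=0$ together with the structure of $X[0,\delta,\cdot]$. When $u=0$ and $\tilde g=\delta$ we have $f_0(\omega)=\omega$, the exponential map is Euclidean translation so $(\exp_x^\delta)^{-1}(\omega)=\omega-x$, and $d\mu_g=d\mu_{\Sp^2}$. Odd symmetry under $r:(\vec\omega,\omega^3)\mapsto(-\vec\omega,\omega^3)$ gives $\avint_{\Sp^2_+}\omega^i\,d\mu_{\Sp^2}=0$ for $i=1,2$, so
$$x+\avint_{\Sp^2_+}(\omega-x)\,d\mu_{\Sp^2}=(0,0,c),\qquad c:=\avint_{\Sp^2_+}\omega^3 d\mu_{\Sp^2}>0,$$
which is independent of $x$. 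Since $\pi_{\R^2}[\delta,(0,0,c)]=0$, we obtain $X[0,\delta,x]=-x$ throughout a neighborhood, which simultaneously yields $X[0,\delta,0]=0$ and $D_3 X[0,\delta,0]=-\operatorname{id}_{\R^2}$, an invertible operator with trivial bounded inverse.

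Next I would establish the regularity of $X$. The Riemannian exponential $(\tilde g,x,v)\mapsto \exp^{\tilde g}_x(v)$ and its local inverse are $C^{l-1}$ in $\tilde g\in C^l$ (the geodesic ODE involves Christoffel symbols, which cost one derivative of $\tilde g$) and smooth in their remaining arguments. Composition with the $C^{4,\gamma}$ immersion $f_u$, pairing with the $\tilde g$-dependent volume form $d\mu_g$, averaging over $\Sp^2_+$, and finally applying $\pi_{\R^2}\in C^l$ from the previous lemma yields $X\in C^{l-1}$ as a map $C^{4,\gamma}(\Sp^2_+)\times G_0^l\times \R^2\to\R^2$ on a small neighborhood of $(0,\delta,0)$. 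In particular $X\in C^1$, so the implicit function theorem produces $\epsilon,\rho>0$ and a unique $C^{l-1}$-map $(u,\tilde g)\mapsto C[u,\tilde g]\in D_\rho(0)$ solving $X[u,\tilde g, C[u,\tilde g]]=0$, with $C[0,\delta]=0$.

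The quantitative estimate $|C[u,\tilde g]|\leq C(\|u\|_{C^1}+\|\tilde g-\delta\|_{C^2})$ is then the Lipschitz bound at $(0,\delta)$; to make it honest at the weaker regularity stated (only $C^1$ in $u$ and $C^2$ in $\tilde g$) I would repeat the same argument at that regularity level, observing that $\exp^{\tilde g}$, $d\mu_g$ and $\pi_{\R^2}$ are all well-defined and Lipschitz in the stated norms, so that the fixed-point set-up underlying the implicit function theorem still produces a locally Lipschitz $C$ vanishing at $(0,\delta)$. The main technical bookkeeping is tracking which norm of $\tilde g$ is used at each step (one derivative lost in $\exp^{\tilde g}$, none in $\pi_{\R^2}$ beyond the first lemma, one in $d\mu_g$) to justify both the $C^{l-1}$ regularity on the one hand and the $(\|\cdot\|_{C^1},\|\cdot\|_{C^2})$ smallness condition on the other; once this is done, the rest is a direct invocation of the implicit function theorem following the template in \cite{AK}.
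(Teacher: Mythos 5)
Your proposal is correct and takes the same approach as the paper, which simply states that since $X[\cdot,\delta,\cdot]=X'[\cdot,\delta,\cdot]$ the argument from Alessandroni--Kuwert carries over; you have unpacked that argument into the expected implicit-function-theorem form. In particular your observations that $X[0,\delta,x]=-x$ (hence $D_3X[0,\delta,0]=-\operatorname{id}_{\R^2}$) and that the regularity of $X$ is governed by the $C^{l-1}$ regularity of $(x,v,\tilde g)\mapsto\exp_x^{\tilde g}v$ and the $C^l$ regularity of $\pi_{\R^2}[\tilde g,\cdot]$ are exactly the facts that make the cited proof go through unchanged at $\tilde g=\delta$.
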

\begin{proof}
As $X'[\cdot,\delta,\cdot]=X[\cdot,\delta,\cdot]$ the proof from \cite{AK} carries over. 
\end{proof}
As $X[\cdot,\delta,\cdot]=X'[\cdot,\delta,\cdot]$ we conclude the same explicit formula for $C[u,\delta]$ as is given in \cite{AK}:
$$C[u,\delta]=\pi_{\R^2}\left(\avint f_ud\mu_{f_u^*\delta}\right)$$
Following \cite{AK} we now compute the $L^2$-gradient of $C^ i$. The only difference in the analysis that is required is to also take the derivative of the projection into account. Let $f(\epsilon)$ be a variation of $f_u$ with $\dot f(0)=\varphi\tilde\nu$ where $\tilde\nu$ is the inner normal of $f_u$. We set $g:=f_u^*\tilde g$, $x(\epsilon):=C[f(\epsilon),\tilde g]$, $I[\tilde g, x, f]:=\avint_{\Sp^2_+}\left(\exp^{\tilde g}_x\right)^{-1}(f)d\mu_g$ and compute 
\begin{align}
    0&=\frac d{d\epsilon}\bigg|_{\epsilon=0}\pi_{\R^2}\left[\tilde g, x+\avint_{\Sp^2_+}\left(\exp^{\tilde g}_x\right)^{-1}(f(\epsilon))d\mu_{f(\epsilon)^*\tilde g}\right]-\frac {dx}{d\epsilon}\bigg|_{\epsilon=0}\nonumber\\
    &=D_2\pi_{\R^2}\left[\tilde g,x+I[\tilde g, x ,f_u]\right]\frac d{d\epsilon}\bigg|_{\epsilon=0}\left(x+\avint_{\Sp^2_+}\left(\exp^{\tilde g}_x\right)^{-1}(f(\epsilon))d\mu_{f(\epsilon)^*\tilde g}\right)-\frac {dx}{d\epsilon}\bigg|_{\epsilon=0}\label{baryepsder}
\end{align}
We investigate the derivative:
\begin{equation}\label{intepsder}
\begin{aligned}
    \frac d{d\epsilon}\bigg|_{\epsilon=0}\avint_{\Sp^2_+}\left(\exp^{\tilde g}_x\right)^{-1}(f(\epsilon))d\mu_{f(\epsilon)^*\tilde g}=&\frac1{\mu_g(\Sp^2_+)^2}\int_{\Sp^2_+}\varphi Hd\mu_g\int_{\Sp^2_+}\left(\exp^{\tilde g}_x\right)^{-1}(f_u(\omega))d\mu_g\\
    &\hspace{-3cm}+\avint_{\Sp^2_+} D\left(\left(\exp^{\tilde g}_x\right)^{-1}\right)(f_u(\omega))\varphi\tilde\nu-\left(\exp^{\tilde g}_x\right)^{-1}(f_u(\omega))H\varphi d\mu_g\\
     &\hspace{-3cm}+\avint_{\Sp^2_+} D_x\left(\left(\exp^{\tilde g}_x\right)^{-1}\right)(f_u(\omega))d\mu_g\frac{dx}{d\epsilon}\bigg|_{\epsilon=0}
\end{aligned}
\end{equation}
Inserting (\ref{intepsder}) into (\ref{baryepsder}) and dropping the arguments on $I$ for spatial reasons gives
\begin{align*}
    \frac {dx}{d\epsilon}\bigg|_{\epsilon=0}=&-\left(D_2\pi_{\R^2}[\tilde g, x+I]\left(\avint_{\Sp^2_+}  D_x\left(\left(\exp^{\tilde g}_x\right)^{-1}\right)(f_u(\omega))d\mu_g +\operatorname{id}_{\R^2}\right)-\operatorname{id}_{\R^2}\right)^{-1}\\
   & D_2\pi_{\R^2}[\tilde g, x+I]    \left(\avint_{\Sp^2_+}\varphi Hd\mu_g\avint_{\Sp^2_+}\left(\exp^{\tilde g}_x\right)^{-1}(f_u(\omega))d\mu_g\right.\\
   &\left.\hspace{3cm}+\avint_{\Sp^2_+} D\left(\left(\exp^{\tilde g}_x\right)^{-1}\right)(f_u(\omega))\varphi\tilde\nu-\left(\exp^{\tilde g}_x\right)^{-1}(f_u(\omega))H\varphi d\mu_g\right).
\end{align*}
As the Riemannian barycenter is invariant under reparameterization the $L^2$-gradient of $C^i$ is normal along $f_u$. We may therefore multiply the actual $L^2$-gradient with $\tilde\nu$ to obtain a scalar function which we denote by $\nabla C^i[u,\tilde g]$. By definition we then have 
$$\frac d{d\epsilon}\bigg|_{\epsilon=0}C^i[f(\epsilon),\tilde g]=\int_{\Sp^2_+}\nabla C^i[u,\tilde g]\tilde g(\frac{\partial f}{\partial\epsilon}\bigg|_{\epsilon=0},\tilde\nu)d\mu_g.$$
The analysis above gives the explicit formula 
\begin{align*} 
\sum_{i=1}^2\nabla C^i[u,\tilde g]e_i=&-\frac1{\mu_g(\Sp^2_+)}\left(D_2\pi_{\R^2}[\tilde g, x+I]\left(\avint_{\Sp^2_+}  D_x\left(\left(\exp^{\tilde g}_x\right)^{-1}\right)(f_u(\omega))d\mu_g + \operatorname{id}_{\R^2}\right)-\operatorname{id}_{\R^2}\right)^{-1}\\
&D_2\pi_{\R^2}[\tilde g, x+I]\left(\avint_{\Sp^2_+}\left(\exp^{\tilde g}_x\right)^{-1}(f_u)d\mu_gH\right.\\
&\hspace{3cm}\left.+ D\left(\left(\exp^{\tilde g}_x\right)^{-1}\right)(f_u(\omega))-\left(\exp^{\tilde g}_x\right)^{-1}(f_u(\omega))H\right).
\end{align*}
Specializing to $\tilde g=\delta$ gives 
$$\nabla C^i[u,\delta]=\frac{-1}{\mu_g(\Sp^2_+)}\pi_{\R^2}\left(\avint_{\Sp^2_+}\left(\exp^{\delta}_x\right)^{-1}(f_u)d\mu_gH+ D\left(\left(\exp^{\delta}_x\right)^{-1}\right)(f_u)-\left(\exp^{\delta}_x\right)^{-1}(f_u)H\right)^i.$$
Note that $\avint_{\Sp^2_+}\left(\exp^{\delta}_x\right)^{-1}(f_u)d\mu_g\perp_\delta \R^2$ by definition of $x=C[u,\delta]$. This eliminates the first term and thus establishes the same formula that is derived in \cite{AK}. In particular we recover 
$$\nabla C^i[0,\delta]=-\frac{3}{2\pi}\omega^i.$$

\paragraph{A note on regularity}\ \\
We consider $\nabla C^i$ as a map $\nabla C^i:G_T^l\times C^{4,1,\gamma}([0,T]\times \Sp^2_+)\rightarrow C^{0,\frac\gamma4}([0,T],\R^2)$ defined on a neighbourhood of $(\delta, 0)$. We use the fact that the map $(x,v,\tilde g)\mapsto \exp_x^{\tilde g}v\in\R^3$ defined on a suitably large neighbourhood of $(0,0,\delta)\in\R^2\times\R^3\times G_0^l$ is of class $C^{l-1}$. This is proven in Appendix 4 in \cite{AK}. Combined with the explicit formula for $\nabla C^i$ we get that $\nabla C^i$ is of class $C^{l-7}$ as long as $l\geq 7$.
\paragraph{Application}\ \\
We now define $C[\phi_{p,u}^\lambda]$ by applying Theorem \ref{barycenterexi} to $f_u$ with the pullback metric $\tilde g^{p,\lambda}$. This gives a point $x=C[u,\tilde g^{p,\lambda}]\in\R^2$ and we wish to define 
$$C[\phi]:=F^\lambda[p, x].$$
We must now prove that this definition does not depend on $p$ and the chosen frame.
\begin{proof}
Abbreviate $\tilde g:=\tilde g^{p,\lambda}$. Rewriting the defining equation for $x$ gives 
\begin{align}
    x=C[f_u,\tilde g]&\Leftrightarrow \int_{\Sp^2_+}\left(\exp^{\tilde g}_x\right)^{-1}(f_u(\omega))d\mu_{f_u^*\tilde g}\perp\R^2 \text{ with respect to $g_x$}\nonumber\\
    &\Leftrightarrow \int_{\Sp^2_+}\left(\exp^{\tilde g}_x\right)^{-1}(f_u(\omega))d\mu_{f_u^*\tilde g}=k\tilde\nu_{\R^2}(x)\text{ for some $k\in\R$}.\label{orthcond}
\end{align}
We note that $F^\lambda[p,\cdot]$ is an isometry up to the scaling factor  $\lambda^2$ in the definition of $\tilde g^{p,\lambda}$. Applying the differential $(DF^\lambda[p,x])^{-1}$ to (\ref{orthcond}) we learn 
\begin{align*}
   x=C[f_u,\tilde g] &\Leftrightarrow (D F^\lambda[p,x])^{-1}\left[\int_{\Sp^2_+}\left(\exp^{\tilde g}_x\right)^{-1}(f_u(\omega))d\mu_{f^*\tilde g}\right]=\lambda k N^S(F^\lambda[p,x])\\
    &\Leftrightarrow\int_{\Sp^2_+}\left(\exp^{\delta}_{F[p,x]}\right)^{-1}(F^\lambda[p, f_u(\omega)])d\mu_{\phi^*\delta}=\lambda kN^S(F^\lambda[p,x])\\
    &\Leftrightarrow \int_{\Sp^2_+}(f_u(\omega)-F[p,x])d\mu_{\phi^*\delta}=\lambda N^S(F^\lambda[p,x]).
\end{align*}
For $\lambda$ small enough we see that $x=C[f_u,\tilde g]$ is equivalent to
    \begin{equation}\label{barysddgfg}
     \pi_S\int_{\Sp^2_+}\phi(\omega)d\mu_{\phi^*\delta}=F^\lambda[p,x],
    \end{equation}
    where $\pi_S$ is the nearest point projection onto $S$. As the left hand side of equation (\ref{barysddgfg}) is independent of the choices for $p$ and the frame the right hand side must be too. 
\end{proof}
An immediate consequence is that for $\phi=F^\lambda[p, f_u]$ we have $C[\phi]=p$ if and only if $C[f_u,\tilde g^{p,\lambda}]=0$. We close by proving that we may always parameterize $\phi$ over its barycenter. 

\begin{theorem}
There exists $\lambda_0>0$, $\theta_0>0$ such that for $\lambda\leq\lambda_0$ each $\phi\in\mathcal S'(\lambda,\theta_0)$ may be parameterized over its barycenter $C[\phi]$. That is, there exists a unique graph function $\tilde u\in C^{4,\gamma}(\Sp^2_+)$ such that 
$$\phi=F^\lambda[C[\phi], f_{\tilde u}].$$
\end{theorem}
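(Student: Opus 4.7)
The plan is to transfer the given radial graph representation $\phi=F^\lambda[p,f_u]$ in the chart centered at some $p\in S$ to the chart centered at the barycenter $q:=C[\phi]$, and then reparameterize $\mathbb{S}^2_+$ so that the transferred map is again a radial graph over this new chart.

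First, by Identity (\ref{barycenteridentity}) together with Theorem \ref{barycenterexi}, $q = F^\lambda[p,\,C[f_u,\tilde g^{p,\lambda}]]$ with $|C[f_u,\tilde g^{p,\lambda}]|\leq C(\|u\|_{C^1}+\|\tilde g^{p,\lambda}-\delta\|_{C^2})\leq C(\theta_0+\lambda)$. Hence $|q-p|\leq C\lambda(\theta_0+\lambda)$, and after shrinking $\lambda_0,\theta_0$ the image $\phi(\mathbb{S}^2_+)$ is contained in $F^\lambda[q,Z_{3/2}]$, so we may define
\[
h:\mathbb{S}^2_+\to Z_2,\qquad h(\omega):=(F^\lambda[q,\cdot])^{-1}\bigl(\phi(\omega)\bigr).
\]
The $C^l$-regularity of the meta-map $(p,\lambda)\mapsto F^\lambda[p,\cdot]$, in the sense discussed in Subsection \ref{terminilogy}, together with the estimates on $u$ and $|q-p|/\lambda$ yields $\|h-\iota\|_{C^{4,\gamma}}\leq C(\theta_0+\lambda)$, where $\iota(\omega):=\omega$. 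Moreover, since $F^\lambda[q,\cdot]$ maps the slice $\mathbb{R}^2\times\{0\}$ locally onto $S$ and $\phi(\partial\mathbb{S}^2_+)\subset S$, one has $h(\partial\mathbb{S}^2_+)\subset \mathbb{R}^2\times\{0\}$.

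Next, decompose $h$ in polar form: set $r(\omega):=|h(\omega)|$ and $\sigma(\omega):=h(\omega)/r(\omega)$. For $\theta_0,\lambda$ small enough $r$ is bounded below, so $\sigma\in C^{4,\gamma}(\mathbb{S}^2_+,\mathbb{S}^2)$ is well-defined and $C^{4,\gamma}$-close to $\iota$. The boundary property above forces $\sigma(\partial\mathbb{S}^2_+)\subset\partial \mathbb{S}^2_+$, so $\sigma$ maps into the closed hemisphere $\mathbb{S}^2_+$; being a small $C^{4,\gamma}$-perturbation of $\iota$ with boundary preserved, $\sigma:\mathbb{S}^2_+\to\mathbb{S}^2_+$ is a $C^{4,\gamma}$-diffeomorphism by the inverse function theorem. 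Define
\[
\tilde u := r\circ\sigma^{-1}-1\in C^{4,\gamma}(\mathbb{S}^2_+),\qquad \|\tilde u\|_{C^{4,\gamma}}\leq C(\theta_0+\lambda).
\]
Then $h(\omega)=(1+\tilde u(\sigma(\omega)))\sigma(\omega)=f_{\tilde u}(\sigma(\omega))$, so $\phi=F^\lambda[q,f_{\tilde u}]\circ\sigma$, which is the desired parameterization over $C[\phi]$ after reparameterizing $\mathbb{S}^2_+$ by $\sigma^{-1}$. Uniqueness of $\tilde u$ follows from the injectivity of $F^\lambda[q,\cdot]$ on $Z_2$ and the uniqueness of the polar decomposition $v=|v|\cdot(v/|v|)$ in $\mathbb{R}^3\setminus\{0\}$.

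The main obstacle is the angular decomposition step, that is, verifying that $\sigma$ is a diffeomorphism of $\mathbb{S}^2_+$ \emph{onto itself} rather than merely a local diffeomorphism of $\mathbb{S}^2_+$ into $\mathbb{S}^2$ near $\iota$. The crucial input here is that the chart $F^\lambda[q,\cdot]$ is adapted to $S$ in such a way that it flattens $S$ to the equatorial slice $\mathbb{R}^2\times\{0\}$; combined with the membership $\phi\in\mathcal M^{4,\gamma}(S)$, this forces $h|_{\partial\mathbb{S}^2_+}$ to lie in $\mathbb{R}^2\times\{0\}$ and hence $\sigma|_{\partial\mathbb{S}^2_+}$ to take values in $\partial\mathbb{S}^2_+$, which, together with the near-identity character of $\sigma$, upgrades it to a global diffeomorphism of $\mathbb{S}^2_+$.
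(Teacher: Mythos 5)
Your proof follows essentially the same route as the paper's: use Theorem~\ref{barycenterexi} to get $d(p,\,C[\phi])\lesssim\lambda(\theta_0+\lambda)$, observe that consequently $\phi(\Sp^2_+)\subset\operatorname{im}F^\lambda[C[\phi],\cdot]$, and pull $\phi$ back through $F^\lambda[C[\phi],\cdot]^{-1}$. Where you go further is in the final step. The paper simply displays $f_{\tilde u}(\omega)=\bigl(F^\lambda[q,\cdot]\bigr)^{-1}\bigl(F^\lambda[p,f_u(\omega)]\bigr)$ as a \emph{definition} of $\tilde u$, which implicitly treats the right-hand side as a radial graph $(1+\tilde u(\omega))\omega$; you correctly observe that the composition $h$ need not have that form (it has a nontrivial angular part once $p\neq q$), insert the polar decomposition $h=r\sigma$, check that $\sigma$ preserves $\partial\Sp^2_+$ because $h(\partial\Sp^2_+)\subset\R^2\times\{0\}$, and set $\tilde u:=r\circ\sigma^{-1}-1$. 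This is a genuine refinement of rigor and fills a gap the paper passes over. One thing to flag: your conclusion $\phi=F^\lambda[q,f_{\tilde u}]\circ\sigma$ is literally weaker than the displayed statement $\phi=F^\lambda[q,f_{\tilde u}]$, but it is the correct reading --- Subsection~\ref{terminilogy} speaks of ``there exists a parameterization of the form $\phi=F^\lambda[C[\phi],f_u]$'', i.e.\ equality up to reparameterization, and uniqueness of $\tilde u$ is unaffected since $\tilde u$ is determined by the image $\phi(\Sp^2_+)$ together with the injectivity of $F^\lambda[q,\cdot]$ and of the polar map. The one step you rightly single out as needing care is upgrading the near-identity, boundary-preserving map $\sigma$ from a local diffeomorphism to a global diffeomorphism of the closed hemisphere; this wants a short degree or connectedness argument on $\Sp^2_+$ (using that $\sigma|_{\partial\Sp^2_+}$ is already a degree-one self-map of the circle), but it is routine for small $C^{4,\gamma}$ perturbations and is not a gap.
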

\begin{proof}
As $|\varphi[p,\lambda x]|\leq C\lambda^2$ we derive $F^\lambda[p,Z_2]=\operatorname{im}F^\lambda[p,\cdot]\supset B_{\frac32\lambda}(p)$ for small enough $\lambda$. Next we have the following claim:
\begin{claim}{1}{There exist $\epsilon,\delta,\lambda>0$ such that $F^\lambda[p,x]\in\operatorname{im}F^\lambda[q,\cdot]$ for $p,q\in S$ and $x\in Z_2$ satisfying $d(p,q)<\lambda\delta$, $|x|\leq1+\epsilon$.}
For small enough $\lambda$ we have $\operatorname{im}F^\lambda[q,\cdot]\supset B_{\frac32\lambda}(q)$. Now if $|x|<1+\epsilon$ then 
$$|F^\lambda[p,x]-q|\leq |p-q|+\lambda|x|+|\varphi[q,\lambda x]|\leq \lambda\delta+\lambda(1+\epsilon)+C\lambda^2$$
which implies the claim for a sufficiently small choice of $\epsilon,\ \delta$ and $\lambda$.
\end{claim}
Now suppose that $\phi\in \mathcal S'(\epsilon,\lambda)$. Then we may write $\phi=F^\lambda[p, f_u]$ for some $p\in S$ and $\|u\|_{C^{4,\gamma}}<\epsilon$. Let $q\in S$ denote the barycenter of $\phi$. By definition $q=C[\phi]=F^\lambda[p, C[f_u,\tilde g^{p,\lambda}]$ and using the Estimate from Theorem \ref{barycenterexi} we get $| C[f_u,\tilde g^{p,\lambda}|<\delta(\epsilon,\lambda)$ with $\delta\rightarrow 0$ as $\lambda,\epsilon\rightarrow 0$. This gives $d(p,q)\leq C\lambda\delta(\epsilon,\lambda)$. As $|f_u(\omega)|<1+\epsilon$ we may use Claim\# 1 for small enough $\epsilon$ and $\lambda$ to define $\tilde u$ by 
$$f_{\tilde u}(\omega)=\left(F^\lambda[q,\cdot]\right)^{-1}\left(F^\lambda[p, f_u(\omega)]\right).$$
Uniqueness is easily established.  
\end{proof}

\section{The constraint space}\label{Kspace}
\setcounter{equation}0
Let $\tilde g\in C^5(Z_2,M_3(\R))$ be a metric that is close to the euclidean metric $\|\tilde g-\delta\|_{C^5}\leq \epsilon$. Also consider a function $u\in C^{4,\gamma}(\Sp^2_+)$ that satisfies $\|u\|_{C^{4,\gamma}}\leq \epsilon$. We consider the $L^2$-gradients of the area functional $A[u,\tilde g]$ and the barycenter components $C^i[u,\tilde g]$ and put
\begin{equation}\label{kspacedef}
    K[u,\tilde g]:=\operatorname{span}_\R\set{\nabla A[u,\tilde g], \nabla C^i[u,\tilde g]}.
\end{equation}
In the Appendix of \cite{AK} the following formulas are derived for  $\tilde g=\delta$ and $u=0$:
\begin{equation}\label{Kforround}
\nabla A[0,\delta]=-2\hspace{.5cm}\text{and}\hspace{.5cm}\nabla C^i[0,\delta]=-\frac 3{2\pi}\omega^i
\end{equation}
There it is also shown that the maps $(\tilde g,u)\mapsto \nabla A[u,\tilde g],\,\nabla C^i[u,\tilde g]\in C^{0,\gamma}(\Sp^2_+)$ are of class $C^2$. Hence we learn that for $\epsilon$ small enough the following quantities are well defined:
$$\psi_0[u,\tilde g]:=\frac{\nabla A[u,\tilde g]}{\|\nabla A[u,\tilde g]\|_{L^2(f_u^*\tilde g)}}\hspace{.5cm}\textrm{and}\hspace{.5cm}\psi_i[u,\tilde g]:=\frac{\nabla C^i[u,\tilde g]}{\|\nabla C^i[u,\tilde g]\|_{L^2(f_u^*\tilde g)}}$$
Clearly $(\psi_\mu[u,\tilde g])_{\mu=0}^2$ constitutes a generating system for $K[u,\tilde g]$. (\ref{Kforround}) implies that for $u=0$ and $\tilde g=\delta$ the functions $(\psi[0,\delta])_{\mu=0}^2$ even provide an $L^2(\Sp^2_+)$-orthonormal basis of $K[0,\delta]$. Let 
$$A_{\mu\nu}[u,\tilde g]:=\langle \psi_\mu[u,\tilde g],\psi_\nu[u,\tilde g]\rangle_{L^2 (f_u^*\tilde g)}.$$
As $A_{\mu\nu}[0,\delta]=\delta_{\mu\nu}$ we learn that for $\epsilon$ small enough $(\psi_\mu[u,\tilde g])_{\mu=0}^2$ constitutes a basis of $K[u,\tilde g]$ and that the matrix $A_{\mu\nu}[u,\tilde g]$ is invertible. As a consequence we may write the $L^2(f_u^*\tilde g)$-projection of any $X$ onto the space $K[u,\tilde g]$ as 
\begin{equation}\label{projectionKspacedefinition}
P_{K[u,\tilde g]}(X):=\sum_{\mu,\nu=0}^2A^{\mu\nu}[u,\tilde g]\langle \psi_\mu[u,\tilde g], X\rangle_{L^2 (f_u^*\tilde g)}\psi_\nu[u,\tilde g].
\end{equation}
We denote the complementary projection onto the $L^2(f_u^*\tilde g)$-orthogonal complement $K[u,\tilde g]^\perp$ by $P_{K[u,\tilde g]}^\perp$. As $u$ and $\tilde g$ are usually clear from context we often drop them in the notation as simply write $P_{K}$ and $P_{K}^\perp$. \\

The metric $\tilde g^{p,\lambda}$ satisfies $\|\tilde g^{p,\lambda}\|_{C^{n-1}}\leq C(\Omega)\lambda$ if $\Omega$ is of class $C^n$. Thus we learn that for $\lambda\leq\lambda_0(\Omega, \epsilon)$ we may always achieve $\|\tilde g-\delta\|_{C^5}<\epsilon$.

\section{Computation of metric derivative}\label{metricderivative}
\setcounter{equation}0
We abbreviate $f:=f_u$, $\tilde g(t):=\tilde g^{\xi(t),\lambda}$ and denote the $L^2(f^*\tilde g)$ scalar product by $\langle\cdot,\cdot\rangle$. We also put $\phi(t):=F^\lambda[\xi(t),f(t)]$. We have to compute $D_2 C^ i[u,\tilde g]\dot{\tilde g}$ as well as $D_2 A[u,\tilde g]\dot{\tilde g}$. 
\paragraph{Barycenter}\ \\
We begin by investigating the barycenter. As $C^i[f(t),\tilde g(t)]=0$ we may write 
\begin{equation}\label{11111}
D_2 C[f(t),\tilde g(t)]\dot{\tilde g}(t)=-\langle \nabla C^i[f(t),\tilde g(t)],\tilde g(\dot f(t),\tilde\nu)\rangle.
\end{equation}
The geometric object the evolves in time is $\phi$. Its evolution is then decomposed into two parts. $f$ only represents the evolution of the `sphere-shape' while the movement of the barycenter is not contained in $f$. However, to exploit that $C[\phi(t)]=\xi(t)$ we must take the full evolution of $\phi$ into account. For that purpose we fix $t$ and consider small time displacements $\epsilon$ from $t$. We may then write 
$$\phi(t+\epsilon)=F^\lambda[\xi(t), h(t,\epsilon)]$$
where $h(t,0)=f(t)$. Unlike $f$ the quantity $h$ encodes the full evolution of $\phi$ as $t$ is a fixed time and the dynamical variable is now $\epsilon$. Expressing $\xi(t+\epsilon)=C[\phi(t+\epsilon)]$ inside the chart centered at $\xi(t)$ as $\xi(t+\epsilon)=f[\xi(t),(\xi^1(t+\epsilon),\xi^2(t+\epsilon))$ and differentiating at $\epsilon=0$ gives
\begin{align}
    \lambda^{-1}\dot\xi^i(t)&=\frac d{d\epsilon}\bigg|_{\epsilon=0}\lambda^{-1}\xi^i(t+\varepsilon)\nonumber\\
    &=\frac d{d\epsilon}\bigg|_{\epsilon=0}C^i[h(t,\epsilon),\tilde g^{\xi(t),\lambda}]\nonumber\\
    &=\langle\nabla C^i[f(t),\tilde g(t)],\tilde g(\frac{\partial h}{\partial\epsilon}\bigg|_{\epsilon=0},\tilde\nu)\rangle.\label{dotciiapp}
\end{align}
Here $\tilde\nu$ is the inner normal of $h$. Next we must relate $\partial_\epsilon h(t,0)$ to $\dot f(t)$. For that purpose we use the relation $F^\lambda[\xi(t+\epsilon), f(t+\epsilon)]=\phi(t)=F^\lambda[\xi(t),h(t,\epsilon)]$ to obtain  
\begin{equation}\label{22222}
\begin{aligned}
    D_2 F^\lambda[\xi(t), h(t,0)]\frac{\partial h}{\partial\epsilon}\bigg|_{\epsilon=0}&=\frac d{d\epsilon}\bigg|_{\epsilon=0}F^\lambda[\xi(t),h(t,\epsilon)]\\
    &=\frac d{d\epsilon}\bigg|_{\epsilon=0}F^\lambda[\xi(t+\epsilon), f(t+\epsilon)]\\
    &=D_1F^\lambda[\xi(t), f(t)]\dot\xi(t)+D_2 F^\lambda[\xi(t), f(t)]\dot f(t).
\end{aligned}
\end{equation}
Remembering $h(t,0)=f(t)$, multiplying with $\left(D_2 F^\lambda[\xi(t), f(t)]\right)^{-1}$, recalling $X$ from Equation (\ref{equivequation}) and inserting into equations (\ref{11111}) and (\ref{dotciiapp}) yields 
\begin{align}
    D_2 C^i[f(t),\tilde g(t)]\overset{(\ref{11111})}=&-\langle \nabla C^i[f(t),\tilde g(t)],\tilde g(\dot f(t),\tilde\nu)\rangle\nonumber\\
    =&-\langle \nabla C^i[f(t),\tilde g(t)],\tilde g(\frac{\partial h}{\partial\epsilon}\bigg|_{\epsilon=0},\tilde\nu)-X\rangle\nonumber\\
    \overset{(\ref{dotciiapp})}=&-\frac1\lambda\dot\xi^i(t)+\langle\nabla C^i[f(t),\tilde g(t)],X\rangle.\label{33333}
\end{align}

\paragraph{Area}\ \\
We copy the derivation from above. This time we use that $A[f(t),\tilde g(t)]=2\pi$. Taking the derivative then gives 
\begin{equation}\label{11111a}
D_2 C[f(t),\tilde g(t)]\dot{\tilde g}(t)=-\langle \nabla C^i[f(t),\tilde g(t)],\tilde g(\dot f(t),\tilde\nu)\rangle.
\end{equation}
Next we must use that $A[\phi(t)]=2\pi\lambda^2$ and again use $h(t,\epsilon)$ for small $\epsilon$ to obtain
$$0= \langle\nabla A[f(t),\tilde g(t)],\tilde g(\frac{\partial h}{\partial\epsilon}\bigg|_{\epsilon=0},\tilde\nu)\rangle.$$
We reuse Equation (\ref{22222}) to derive the analogue of Equation (\ref{33333}) given by
\begin{equation}\label{33333n}
D_2 A[f(t),\tilde g(t)]\dot{\tilde g}=\langle\nabla A[f(t),\tilde g(t)],X\rangle.
\end{equation}

\paragraph{Conclusion}\ \\
We may now substitute Equations (\ref{33333}) and (\ref{33333n}) into the definition of $\tau$ in Equation (\ref{atomheartmother5}) to get
\begin{align}
\tau[u,\tilde g]\overset{(\ref{atomheartmother5})}=&-\sum_{\mu,\nu=0}^3A^ {\mu\nu}[u,\tilde g]\frac{D_2 C^\mu[u,\tilde g]\dot{\tilde g}}{\|\nabla C^\mu[u,\tilde g]\|_{L^2(f_u^*\tilde g)}}\frac{\nabla C^\nu[u,\tilde g^{\xi,\lambda}]}{\|\nabla C^\nu[u,\tilde g]\|_{L^2(f_u^*\tilde g)}}\label{tauequationappendix}\\
\overset{\substack{(\ref{33333}),\\(\ref{33333n})}}=&-\sum_{\mu,\nu=0}^3A^ {\mu\nu}[u,\tilde g]\frac{\langle\nabla C^\mu[u,\tilde g], X\rangle}{\|\nabla C^\mu[u,\tilde g]\|_{L^2(f_u^*\tilde g)}}\frac{\nabla C^\nu[u,\tilde g^{\xi,\lambda}]}{\|\nabla C^\nu[u,\tilde g]\|_{L^2(f_u^*\tilde g)}}\\
&+\sum_{\nu=0}^2\sum_{i=1}^2A^{i\nu}[u,\tilde g]\frac{\dot\xi^i}{\lambda\|\nabla C^i[u,\tilde g]\|_{L^2(f_u^*\tilde g)}}\frac{\nabla C^\nu[u,\tilde g^{\xi,\lambda}]}{\|\nabla C^\nu[u,\tilde g]\|_{L^2(f_u^*\tilde g)}}\nonumber\\
=&-P_K(X)+\sum_{\nu=0}^2\sum_{i=1}^2A^{i\nu}[u,\tilde g]\frac{\dot\xi^i}{\lambda\|\nabla C^i[u,\tilde g]\|_{L^2(f_u^*\tilde g)}}\frac{\nabla C^\nu[u,\tilde g^{\xi,\lambda}]}{\|\nabla C^\nu[u,\tilde g]\|_{L^2(f_u^*\tilde g)}}.\nonumber
\end{align}

\section*{Acknowledgements}
The author would like to thank Ernst Kuwert for the suggestion of this interesting topic and the helpful guidance as well as Marius Müller for the many helpful discussions.

\bibliographystyle{plain}
\bibliography{quellen}

\end{document}